\documentclass[11pt,reqno,twoside,a4paper]{article}
\usepackage{fullpage}
\usepackage{mysty}
\title{Inertial Bregman Proximal Gradient under Partial Smoothness}

\author{Jean-Jacques Godeme \thanks{INRIA Sophia  \& Université Côte d’Azur, CNRS, LJAD, France. \underline{e-mail:} jean-jacques.godeme@inria.fr}} 
\date{}
\begin{document}

\maketitle

\begin{abstract}
This work considers an Inertial version of  Bregman Proximal Gradient algorithm (IBPG) for minimizing the sum of two single-valued functions in finite dimension. We suppose that one of the functions is  proper, closed, and convex but non-necessarily smooth whilst the second is a  smooth enough function but not necessarily convex. For the latter, we ask the smooth adaptable property (smad)  with respect to some kernel or entropy which allows to remove the very popular global Lipschitz continuity requirement on  the gradient of the smooth part. We consider  the IBPG under the framework of the triangle scaling property (TSP) which is a geometrical property for which one can provably ensure acceleration for a certain subset of kernel/entropy functions in the convex setting.  Based on this property, we provide global convergence guarantees when the entropy is strongly convex under the framework of the Kurdyka-\L{}ojasiewicz (KL) property. Turning to the local convergence properties, we show that when the nonsmooth part is partly smooth relative to a smooth submanifold, IBPG has a finite activity identification property before entering a local linear convergence regime for which we establish a sharp estimate of the convergence rate. We report numerical simulations to illustrate our theoretical results on low complexity regularized phase retrieval. 
\end{abstract}
\begin{keywords}
Inertial Bregman proximal gradient, Partial smoothness, Trap avoidance.   
\end{keywords}
%\tableofcontents
\section{Introduction}\label{sec:intro}
\subsection{Problem statement}
In this work, we study the following, non-necessarily smooth nor convex optimization problem
\beq\tag{$\calP$}\label{eq: generalpro}
\inf_{x\in\bbR^n} \left\{\Phi(x)\eqdef F(x)+G(x)\right\} ,
\eeq
under the following premises  
\begin{premise}{\ }\label{assump_A}
	\begin{enumerate}[label=(A.\arabic*)]
		\item \label{assump_A2} $F:\bbR^n\to\bbR$ is $C^1-$smooth,
		\item \label{assump_A1} $G:\bbR^n\to\overline{\bbR}$ is a  proper, lower semi-continuous and convex,
		\item \label{assump_A3} $\Phi$ is bounded from below, \ie $\inf \Phi > -\infty$ .
		% \item \label{assump_A3} The set of minimizers of $\Phi$ is non-empty \ie $\Argmin(\Phi)\neq \emptyset$.
	\end{enumerate}
\end{premise}
In many applications like  machine learning, automatic differentiation and inverse problems in particular phase retrieval,  $G$ plays the role of a penalty or regularization term. It is intended to encode structural properties or prior  information about the set of solutions.  On the other hand, $F$ is  a data fidelity term between the true vector and the estimated vector. Throughout this paper,  $F$ can be nonconvex which allows us to cover a various problems of these fields. 
Before we describe the method that we propose to solve this problem, we recall the definition of Bregman divergence, an important notion that will be at the  heart in both our algorithm and his theoretical analysis.
\begin{definition}\label{Brgdf}\textbf{(Bregman divergence)} The  Bregman divergence associated with  a Bregman entropy $\phi$  is given by the following expression
	\begin{equation}\label{Bregmdef}
		D_{\phi}^v(x,y)\eqdef \left\{\begin{aligned} &\phi(x)-\phi(y)-\pscal{v;x-y}, &\si (x,y)\in \left(\dom\phi\times \inte\dom\phi\right),v\in\partial\phi(y)  ,\\
			&+\infty & \odwz. \end{aligned}\right.
	\end{equation}
\end{definition}
To solve the optimization problem \eqref{eq: generalpro}, we  link to  $\Phi$  a Bregman entropy function $\psi$ with respect to which $F$ is relatively smooth. In this work, we propose Algorithm~\ref{alg:BPGBT} which we coin Inertial Bregman Proximal Gradient (IBPG). 

\begin{algorithm}[htbp]
	\caption{Inertial Bregman Proximal Gradient}
	\label{alg:BPGBT}
	\textbf{Parameters:} $\kappa\in]1,2]$\;
	\textbf{Initialization:} $z_{-1}=x_{-1}, z_0=\xo\in\mathbb{R}^n$, $a_{-1}=a_0=1$,  and $0<\ua<\oa\leq1$\;
	\For{$k=0,1,\ldots$}{
		\vspace{-0.25cm}
		\begin{flalign} \tag{IBPG}\label{BPGalgo} 
			&\begin{aligned} 
				&\yk= \zk+ a_k(\xk-\zk); \\
				&\xkp = \Ppa{\nabla\psi+\gak\partial G}^{-1}\Ppa{\nabla\psi(\yk)-\gak\nabla F(\yk)}, \quad \gak=\frac{a_k^{\kappa-1}}{L}; \\
				&\zkp=\xk+a_k(\xkp-\xk); \\ 
				&\text{Choose }  a_{k+1}\in [\ua,\oa] \qobjq{s.t.} (a_{k+1}^{1-\kappa}+1)^{1/\kappa}(1-a_{k+1}) < a_k^{1/\kappa-1}/(1-a_k) . 
			\end{aligned}&
		\end{flalign}
	}
\end{algorithm}
It can be easily shown that $\yk$ can be written as a recursion of $(\xk,\xkm)$ with inertial parameters that depend on $(a_k,a_{k-1})$. When $\ak\equiv1$, we recover the Bregman Proximal Gradient (BPG without inertia) whose global convergence was already established in \cite{bolte_first_2017}. 

In \cite{hanzely_accelerated_2021}, a slightly different algorithm called Accelerated Bregman Proximal Gradient, was considered in the convex case. This scheme was inspired by the Improved Interior Gradient Algorithm for conic optimization as  developed in \cite[Section~5]{auslender_interior_2006}. It was shown in \cite{hanzely_accelerated_2021} that  Accelerated Bregman Proximal Gradient, indeed,  provides acceleration when the entropy $\psi$ satisfies the triangle scaling property (TSP) (see Definition~\ref{df:triangle-scaling}) with a convergence rate of $O(k^{-\kappa})$, where $ \kappa\in]1,2]$ is the triangle scaling exponent (TSE). For $\kappa=2$, one recovers the famous Nesterov-like accelerated rate. 

In the nonconvex case, another inertial scheme was analyzed in \cite{mukkamala_convex-concave_2020} when the Bregman entropy is also strongly convex and $G$ is weakly convex. Global convergence of the iterates under the KL property was proved there using line search on both the extrapolation (inertial) parameter and the descent step-size. Adding more structure of the  choice of the Bregman entropy, (\ie TSP) will allow to have a sharper analysis and results.
\subsection{Prior work}

\paragraph{Inertial Bregman proximal gradient algorithms}
Inertial methods emerged from the quest of accelerating the convergence of first-order optimization methods such as gradient descent. This starts with the Heavy-ball method with friction \cite{polyak_methods_1964}  for gradient descent ($G\equiv0$). This approach can be interpreted as a  discretization of a nonlinear second-order dynamical system, specifically an oscillator with viscous damping. This idea permeates now all the optimization techniques and has been applied for instance to the proximal point method \cite{alvarez_minimizing_2000, alvarez_inertial_2001} and to the inertial Forward-Backwards type methods \cite{moudafi_convergence_2003,attouch_dynamical_2014,lorenz_inertial_2015}. In terms of acceleration for convex programming, the accelerated FISTA method \cite{beck_fast_2009,nesterov_method_1983}  achieves a convergence rate of $O(k^{-2})$ for the sequence of objective functions which has been improved to $o(k^{-2})$ in \cite{attouch_rate_2016}, with convergent iterates in \cite{chambolle_convergence_2015}.  

For Bregman-based methods, first-order methods achieve the convergence $O(k^{-1})$ for the sequence of objective \cite{birnbaum2011distributed,bauschke_descent_2016,bolte_first_2017,lu2018relatively,Teboulle18} as in the Euclidean case. A natural question is whether the Bregman proximal gradient algorithm can be accelerated in the relative smooth framework. One can notice many attempts to answer this question within \cite{bauschke_descent_2016,lu2018relatively}, and the survey paper \cite[Section~6]{Teboulle18}. Positive answers have already been provided under  strict additional regularity premises. When the Bregman entropy $\psi$ is a strongly convex Legendre kernel and the smooth part of the objective has a Lipschitz continuous gradient, the Improved Interior Gradient Algorithm in \cite{auslender_interior_2006} admits an accelerated $O(1/k)$ convergence rate on the objective, by using the same inertial technique as Nesterov-type methods. For a subset of relatively smooth functions, \cite{hanzely_accelerated_2021} shows that the convergence rate of the objective can be improved from $O(1/k)$ to $O(1/k^{\kappa})$ where  $\kappa\in[1,2]$ is determined by some crucial triangle scaling property of the Bregman distance, whose genericity is unclear. The general case was still open until the work of \cite{dragomir_optimal_2022} which showed that the $O(1/k)$ rate is optimal for first-order algorithms over the subset of relatively smooth functions, and this cannot be improved in general. As far as the nonconvex case is concerned, inertial versions of the  proximal gradient method were analyzed in \cite{mukkamala_convex-concave_2020,wen_linear_2017,hien_inertial_2020,wu_inertial_2021} when the entropy is strongly convex. All these works use either backtracking or line search on both the extrapolation (inertial) parameter and the descent step-size. Global convergence of the iterates under KL were proved in \cite{mukkamala_convex-concave_2020,wu_inertial_2021} while \cite{wen_linear_2017} showed linear convergence under certain error bound condition.

\paragraph{Activity identification} Finite activity identification of underlying manifolds is an important phenomenon occurring when differents types of algorithms are used to solve structured nonsmooth minimization problems. Such analysis can be traced back to the work of \cite{burke_identification_1988} and \cite{dunn_convergence_1987}, where a nondegeneracy condition is used to ensure that the optimal active constraints are identified after finitely many iterations. They showed how polyhedral faces of convex sets could be identified finitely. Later \cite{wright_identifiable_1993} extended these results by introducing the concept of smooth identifiable surface and providing an algorithm that identifies the active constraints in convex problems in a finite number of iterations.  This work was then generalized in \cite{lewis_active_2002} to the notion of partial smoothness for general non necessarily convex problems. Among algorithms that identify active manifolds of partly smooth functions, one can cite  the (sub)gradient projection method, Newton-like methods, and the proximal point algorithm as shown in \cite{hare_identifying_2007,hare_identifying_2004,hare_identifying_2011} have shown for the (sub)gradient projection method, Newton-like methods, and the proximal point algorithm. A comprehensive study of finite activity identification as well as sharp local linear convergence has been established by Liang \etal in a series of papers for a variety of operator splitting algorithms: forward-backward-type algorithms including accelerated ones \cite{liang_local_2014,liang_activity_2017,liang_multi-step_2016}, for Douglas-Rashford/ADMM \cite{liang_activity_2015} and for the Primal-Dual splitting \cite{liang_local_2018}.

\paragraph{Strict saddle avoidance} A driving theme in nonconvex optimization, supported by empirical evidence, is that simple algorithms often work well in highly nonconvex and even nonsmooth settings by avoiding ``bad'' critical points. A growing body of literature provides one compelling explanation for this escape property or trap avoidance phenomenon. Namely, typical smooth objective functions provably satisfy the strict saddle property, meaning each critical point is either a local minimizer or has a direction of strictly negative curvature. For such functions, either randomly initialized gradient-type methods \cite{goudou_gradient_2009,panageas_gradient_2017}, or stochastically perturbed gradient methods \cite{pemantle_nonconvergence_1990,brandiere_algorithmes_1996} provably escape all strict saddle points, generically on initialization or on the noise. At the heart of the analysis, in all these work is the use of the Center Stable Manifold Theorem \cite[Theorem~III.7]{shub_global_1987} which finds its roots in the work of Poincaré. In \cite{goudou_gradient_2009}, it was proved that the heavy ball method with friction, applied to a $C^2-$Morse function, provably converges to a local minimizer generically with any initialization. Morse functions are known to be generic in the Baire sense in the space of $C^2-$functions (see \cite{aubin_applied_1984}). In \cite{lee_first-order_2019}, it has been shown that when the objective function is sufficiently smooth with a Lipschitz continuous gradient, a  broader class of first-order methods avoid strict saddle points. In the nonsmooth setting, Euclidean proximal methods as explored in \cite{davis_proximal_2022}, are  effective in avoiding a nonsmooth version of strict saddle points: ``active'' strict saddle points, which are strict saddle points with respect to an underlying activity manifold. In parallel to \cite{davis_proximal_2022}, we would like to mention the recent work of \cite{bianchi_stochastic_2023} which shows that stochastic subgradient descent also escapes ``active'' strict saddle points. The last two papers rely on important geometric conditions that turn out to be generic in the space of tame weakly convex functions.

\subsection{Contributions}
In this work, we study the global and local convergence properties of Algorithm~\ref{alg:BPGBT} for a subclass of Bregman kernels that satisfies the TSP property. The main contributions of this work are the following
\paragraph{Global convergence of the scheme} Under the KL-property, we show that bounded iterates generated by Algorithm~\ref{alg:BPGBT}  converge to a critical point of the objective function when the inertial parameters satisfy the condition that $\ak\in [\oa,\ua]\subset ]0,1[$. We also show that starting near an optimal solution, the generated sequence converges to it. 

\paragraph{Finite activity identification} We establish that sequence generated by IBPG enjoy a finite activity identification property. More precisely, we show that when the nonsmooth part $G$ is partly smooth with respect to an underlying manifold $\calM_{\xpa}$ near some critical point $\xpa$, the iterates will identify the manifold under an appropriate nondegeneracy condition. This identification property implies the existence of some large integer $K$ such that the iterates generated after this number lie in the manifold $\calM_{\xpa}$. The nondegeneracy condition cannot be relaxed in general as shown in \cite[Example~4.1, 4.2, 4.3]{hare_identifying_2007} respectively for the projected gradient descent, Newton method, and the proximal point algorithm.  

\paragraph{Local linear convergence analysis} After activity identification, we show that locally along the active manifold $\calM_{\xpa}$, we can linearize the iterates generated by IBPG when $F$ is locally $C^2$. In this case, we provide a spectral analysis of the linearized system, and under appropriate restricted injectivity, we exhibit a linear convergence regime for proper choice of the inertial parameter. This choice depends in particular on the TSE parameter $\kappa$, which generalizes the Euclidean case for which this parameter is just $2$. Our work hence extends that of \cite{liang_multi-step_2016} to non-euclidean, Bregman-based geometry.

\paragraph{Escape property in the smooth case} Equipped with the center stable manifold theorem, we study the trap avoidance of the inertial Bregman gradient method where $G\equiv0$. We show that the scheme generically avoids strict saddle points, which are critical points where the function has at least one direction of negative curvature.

Several numerical results are reported that confirm all our theoretical findings.

\subsection{Paper organization}
The rest of the paper is organized as follows. In section~\ref{sec:prelim}, we collect the notations and general definitions needed throughout this work. Section~\ref{sec:ibpg_convergence} contains the global convergence analysis of the  Inertial Bregman Proximal Gradient scheme with the Kurdyka-\L{}ojasiewicz property. Section~\ref{sec:ibpg_localconv} is a self-contain local analysis of the scheme that includes a finite activity identification of underline manifolds with the notion of partial smoothness, a local linearization of the iterates, and finally a local linear convergence. Section~\ref{sec:ibpg_trap_avoidance} contains a study of trap avoidance in the smooth case.
 In Section~\ref{sec:ibpg_numexp}, we highlight our results with a nonlinear inverse problems that arise in many areas of imaging science with applications that include diffraction imaging, astronomical imaging, and microscopy namely phase retrieval with prior information on the signal that we want to recover such as sparsity, low-rank, etc. All the technical proofs, the Bregman and  KL toolbox are postponed in the Appendices.

\section{Preliminaries}\label{sec:prelim}
\subsection{Notations}

In this work, for any nonempty convex set $\Omega\subseteq\bbR^n,$ the following subsets  $\inte(\Omega), \ri{\Omega},  \bd(\Omega),  \rbd{\Omega}$ and $\Aff{\Omega}$ denote respectively its interior, relative interior, boundary, relative boundary,  and its affine hull. We also denote  by $\proj{\Omega}$ the orthogonal projector onto $\Omega$. The subset parallel to the set $\Omega$  passing through the origin $0$ is denoted $ \LinHull(\Omega)=\Aff{\Omega}-\Omega$. We denote by $\pscal{\cdot,\cdot}$ the usual scalar product and  $\normm{\cdot}$ his  corresponding norm. $B(x,r)$ is the corresponding ball of radius $r$ centered at $x$ and $\mathbb{S}^{n-1}$ is the corresponding unit sphere. $\Gamma_0(\bbR^n)$ is the usual set of proper, closed, and convex functions. $\bbN$ is the set of non-negative integers. For $m \in \bbN^*, [m]$ is the discrete set $\{1,\cdots,m\}$. For any matrix $A$, $\transp{A}$ denote the transpose of $A$ and $\adj{A}$ the adjoint  (transpose conjugate) when $A$ is a complex matrix. We used the script $\lon$ for the partial order of Loewner between positive semidefinite matrices.  $\dom(f)$ is the domain of the function $f$. {$f^*$ denotes the Legendre-Fenchel conjugate of $f$.} 
%is the interior of a set $\Omega$,

\subsection{General Definitions}
Throughout this section, let $g:\bbR^n\to\overline{\bbR}=\bbR\cup\Ba{\infty}$ be a function such that $g\in\Gamma_0(\bbR^n)$.  

\begin{definition}\label{def:attentopo}{\ }\textbf{(Attentive topology)} 
        %Let $g$ be a function. 
        \begin{itemize}
        \item \textbf{(Attentive neighborhood)} A point $x\in\bbR^n$ is in the $g-$attentive neighborhood of  $\xpa\in\bbR^n$  , if  for all $r>0$, there exists $u\in ]0,r[$ and $\mu>0$ such that: 
        $\normm{x-\xpa}\leq u$ and $g(\xpa)<g(x)<g(\xpa)+\mu.$

        \item  \textbf{(Attentive convergence)} $x \xrightarrow{g}\xpa$ stand for $g-$attentive convergence \ie $x\rightarrow \xpa$ with $g(x)\rightarrow g(\xpa)$.
        \end{itemize}
        
\end{definition}

\begin{definition} \label{def:subdiff}{\ }\textbf{(Subdifferential)}
 The subdifferential of $g$ at a point $\xpa\in \bbR^n,$ is the set-valued operator defined as
\begin{equation}
\partial g(\xpa)\eqdef
    \bBa{v\in\bbR^n, g(x)\geq g(\xpa)-\pscal{v;x-\xpa} }.
    \end{equation}
\end{definition}
  
\begin{remark}\label{def: crit_point}\textbf{(Fermat's rule )} %Let $g:\bbR^n\to\overline{\bbR}$ be a proper function.   
       A point $\xpa\in\bbR^n$ is a critical point of $g$ if $0\in\partial g(\xpa)$. We denote by $\crit{g}$ the set of all critical points of $g$.
 \end{remark}

\begin{definition}\label{def:Activ_mani}\textbf{(Active Manifold)}
     Let suppose that $g$ is a proper function and fix a submanifold  $\calM\subset\bbR^n$ containing a critical point $\xpa$ of $g$. Then, the set $\calM$ is called an active $C^p-$manifold $(p\geq2)$ around $\xpa$ if there exists a neighborhood $\calV_{\xpa}$ of the critical point $\xpa$ such that the following holds: 
     \begin{itemize}
         \item   The set $\calM\cap\calV_{\xpa}$ is a $C^{p}-$ smooth manifold  and the restriction of $g$ over $\calM\cap\calV_{\xpa}$ is also $C^{p}-$smooth.
        \item We have the lower bound:
         \begin{equation}\label{eq:Activ_mani}
           \inf\{\normm{v}:v\in\partial g(x), x\in\calV_{\xpa}\backslash\calM\}>0.  
         \end{equation}
     \end{itemize} 
    \end{definition}
\begin{remark}\label{rmk: Activ_mani}
This definition imposes an additional structural assumption to $g$.  Mainly, $g$ should be smooth along the manifold $\calM$ and the infimum of the norm of the subgradients are all bounded away from $0$.
\end{remark}
Now, we define active strict saddle points for nonsmooth functions. 
\begin{definition} \label{def: Active_stritsad}\textbf{(Active strict saddle)}
     We say that a point $\xpa$  is an active strict saddle point  of the nonsmooth function $g$ if 
    \begin{enumerate} [label=(\roman*)]
        \item\label{def: Active_stritsad-i}$\xpa\in\crit{g}$ \ie,  $0\in\partial g(\xpa)$.
        \item\label{def: Active_stritsad-ii} There exists an active manifold $\calM$ to the point $\xpa$ according the Definition~\ref{def:Activ_mani}.  
        \item\label{def: Active_stritsad-iii}  $g$ decreases quadratically along some direction $v\in\tgtManif{\calM}{\xpa}$ \ie there exists some smooth curve $\tau\subset\calM$ with $\tau(0)=0$ and $\dot{\tau}(0)=v$ such that: 
    \[
    \left.\frac{d^2}{dt^2}\pa{g\circ\tau}(t)\right|_{t=0}<0. 
    \]       
    \end{enumerate}
    \end{definition}
    Let us denote by $\actstrisad{g}$ the set of all active strict saddle points of $g$. 
\begin{remark}\label{rmk: activ_saddle}{\ }
\begin{itemize}
\item If every critical point of $g$ is either an active strict saddle point or a local/global minimizer thus $g$ has the strict saddle property. 
\item Active strict saddle generalizes the notion of strict saddles for $C^2-$smooth functions to nonsmooth functions. Indeed when $g$ is $C^2-$smooth we have $\calM=\bbR^n$ and thus active strict saddles are exactly strict saddles points and we denote the set $\strisad{g}$. 
\item  Condition \ref{def: Active_stritsad-iii} precisely mean that there exists $v\in\tgtManif{\calM}{\xpa}$ such that we have $d^2f_{\calM}(\xpa)(v)<0$, where $d^2f_{\calM}(\cdot)(\cdot)$ denotes the second order  subderivative along the manifold $\calM$ (for more details see \cite[Chapter 13, Section B]{rockafellar_variational_1998} ).    
\item This notion of an active strict saddle point is generic for functions that are closed, weakly convex, and semi-algebraic. 
\end{itemize}
\end{remark}

\section{Global Convergence Analysis}\label{sec:ibpg_convergence}
\subsection{Main assumptions}
We will need the following premises on $\psi$, which will be invoked jointly or separately in our proofs.
\begin{premise}\label{assump_B}{\ }
	\begin{enumerate}[label=(B.\arabic*)]
		\item \label{assump_B1}  $\psi$ is a $C^2-$smooth $\sigma_{\psi}$-strongly convex function with $\sigma_{\psi} > 0$,
		%  \item \label{assump_B2} For any $\gamma>0$ the function $\psi+\gamma G$ is super-coercive.
		\item \label{assump_B4} $F$ is $L-$smooth relative to $\psi$ on $\bbR^n$,
		\item \label{assump_B3} $\nabla F$ and $\nabla \psi$ are Lipschitz continuous on bounded subsets of $\bbR^n$.
	\end{enumerate}
\end{premise}
\begin{remark} {\ }
	\begin{itemize}
		\item Strong convexity of $\psi$ plays an important role to establish global convergence of the sequences of iterates,
		\item $C^2-$smoothness of $\psi$ is only needed occasionally and some of our statements remain true even without it,
		\item Premise~\ref{assump_B1} implies that $\psi$ is Legendre and thus $\nabla\psi$ is a bijection on $\bbR^n$ whose inverse is $\nabla \psi^*$ (see Remark~\ref{rmk: Legend_func}). Moreover, strong convexity implies that $\nabla^2\psi^*(\nabla\psi(x)) = \nabla^2\psi(x)^{-1}$; see \eg \cite[Lemma~2.2]{laude_bregman_2020}.  
		\item Premise~\ref{assump_A1} together with \ref{assump_B1} imply that the $D$-prox operator of index $\gamma > 0$
		\[
		\Ppa{\nabla\psi+\gamma\partial G}^{-1} \circ \nabla\psi: x \in \bbR^n \mapsto \Argmin_{z \in \bbR^n} G(z) + \frac{1}{\gamma} D_\psi(z,x)
		\]
		is single-valued; see \cite[Proposition~3.22]{bauschke_bregman_2003}. Strong convexity of $\psi$ can be weakened to strict convexity and legenderness if $\psi+\gamma G$ is supercoercive. If convexity of $G$ is removed, the $D$-prox is nonempty and compact-valued if $\psi$ is Legendre and $\psi+\gamma G$ is supercoercive; see \cite[Lemma~3.1]{bolte_first_2017}. 
		\item Our analysis and results can be extended to handle the constrained case where \eqref{eq: generalpro} is solved over a closed convex set. This necessitates that $\psi$ to be a barrier function of the constraint set and some technical (domain) adaptations of our assumptions that we prefer to avoid here for the sake of clarity\footnote{Anyway, our focus being on phase retrieval, our current setting is sufficient.}.
	\end{itemize}
\end{remark}

\subsection{Convergence analysis}\label{sec:ibpg_globconv}
Our main result states that if $\Phi$ is also a KL function, then bounded iterates of IBPG converge to a critical point of $\Phi$. 
\begin{theorem}\label{thm:Global_conv_ABPG}
	Consider problem \eqref{eq: generalpro} under the Premises~\ref{assump_A} and  \ref{assump_B}. Suppose that the sequence of IBPG parameters $\seq{\ak}$ are chosen as in Algorithm~\ref{alg:BPGBT}. Then,
	\begin{enumerate}[label=(\roman*)]
		\item  \label{thm:Global_conv_ABPG-iii}
		$\sum_{k \in \bbN}\normm{\xk-\xkm}^2<\infty$ and
		\[
		\min\limits_{0 \leq i\leq k} \normm{x_i-x_{i-1}}^2\leq \frac{2(\sigma_{\psi}\nu L)^{-1}\Psi_0(x_0,x_{-1})}{k+1} .
		\]
		
		\medskip
		
		Assume moreover that $\Phi$ and $\psi$ satisfy the KL property, that $a_k \equiv a\in[\ua,\oa]$ for all $k \geq K$, where $K$ is arbitrarily large. If the sequence of IBPG iterates $\seq{\xk}$ is bounded then,
		\item \label{thm:Global_conv_ABPG-i} all sequences $\seq{\xk}$, $\seq{\yk}$ and $\seq{\zk}$ have finite length and converge to the same limit in $\crit{\Phi}$.
		\item \label{thm:Global_conv_ABPG-ii} If $\Argmin(\Phi) \neq \emptyset$ and IBPG is started near a global minimizer $\xsol$ in the $\Phi-$attentive topology, then the generated sequences converge to $\xsol$.
	\end{enumerate}
\end{theorem}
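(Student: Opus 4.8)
The plan is to run the familiar three-step Kurdyka--\L ojasiewicz scheme — sufficient decrease, a relative-error (subgradient) bound, and a desingularization/telescoping argument — but carried out for the Bregman geometry and the two-step inertial recursion, with the triangle scaling property (TSP) and the admissibility rule on $a_{k+1}$ playing the role of glue that keeps the extrapolation cross-terms under control.

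\textbf{Step 1 (sufficient decrease; part~\ref{thm:Global_conv_ABPG-iii}).} I would first rewrite the $D$-prox step as $\xkp=\Argmin_z\bBa{G(z)+\pscal{\nabla F(\yk);z-\yk}+\gak^{-1}D_\psi(z,\yk)}$ and read off its optimality condition: there is $v_{k+1}\in\partial G(\xkp)$ with $\nabla\psi(\yk)-\gak\nabla F(\yk)=\nabla\psi(\xkp)+\gak v_{k+1}$. Testing the convexity inequality for $G$ at $\xkp$ against the extrapolated point, using the Bregman three-point identity, the $L$-relative-smoothness descent inequality $F(\xkp)\leq F(\yk)+\pscal{\nabla F(\yk);\xkp-\yk}+L\,D_\psi(\xkp,\yk)$, the TSP of $\psi$ to rescale the $D_\psi$ terms attached to $\yk$ and $\zkp$, and finally $\sigma_\psi$-strong convexity of $\psi$ to pass from $D_\psi$ to squared norms, one produces a merit function $\Psi_k=\Psi_k(\xk,\xkm)$ of the form ``$\Phi(\xk)$ plus a nonnegative memory term in $(\xk,\xkm)$'' obeying $\Psi_{k+1}\leq\Psi_k-\tfrac{\sigma_\psi\nu L}{2}\normm{\xk-\xkm}^2$, where the constraint $(a_{k+1}^{1-\kappa}+1)^{1/\kappa}(1-a_{k+1})<a_k^{1/\kappa-1}/(1-a_k)$ is exactly what makes the coefficients line up with a constant $\nu>0$. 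Since $\Phi$ is bounded below by Premise~\ref{assump_A} and the memory term is nonnegative, $\Psi_k$ is bounded below; telescoping from $0$ to $k$ yields $\sum_k\normm{\xk-\xkm}^2<\infty$ and the stated $O(1/(k+1))$ bound on $\min_{0\leq i\leq k}\normm{x_i-x_{i-1}}^2$ with front constant $2(\sigma_\psi\nu L)^{-1}\Psi_0(x_0,x_{-1})$.

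\textbf{Step 2 (subgradient bound and limit points).} Now assume $\ak\equiv a$ for $k\geq K$ and the iterates bounded, so by Premise~\ref{assump_B3} $\nabla F$ and $\nabla\psi$ are Lipschitz on a ball containing all iterates. From the optimality condition I would assemble $w_{k+1}\eqdef\nabla F(\xkp)-\nabla F(\yk)+\gak^{-1}\pa{\nabla\psi(\yk)-\nabla\psi(\xkp)}+v_{k+1}\in\partial\Phi(\xkp)$ and, using that $\yk$ is an affine combination of $\xk,\xkm$ and that $\gak$ is bounded below once $a_k\equiv a$, bound $\normm{w_{k+1}}\leq b\pa{\normm{\xkp-\xk}+\normm{\xk-\xkm}}$; this extends to a bound on $\dist(0,\partial\Psi_{k+1})$. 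Combining $\xk-\xkm\to0$ with boundedness, every limit point $\bar x$ of $\seq{\xk}$ lies in $\crit{\Phi}$; continuity of $F$ together with the prox characterization gives $G(\xk)\to G(\bar x)$, hence $\Phi(\xk)\to\Phi(\bar x)$, and the memory term vanishes in the limit, so $\Psi_k\downarrow\Phi(\bar x)\eqdef\Psi_*$, constant over the compact connected set of limit points.

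\textbf{Step 3 (finite length; parts~\ref{thm:Global_conv_ABPG-i} and \ref{thm:Global_conv_ABPG-ii}).} Since $\Phi$ and $\psi$ are KL, the merit function $\Psi_k$ on $\bbR^n\times\bbR^n$ is KL; the uniformized KL lemma on the set of limit points gives a concave desingularizing $\varphi$ with $\varphi'(\Psi_k-\Psi_*)\dist(0,\partial\Psi_k)\geq1$ for large $k$. Chaining the sufficient decrease, the subgradient bound, concavity of $\varphi$ and $2\sqrt{st}\leq s+t$ yields $2\normm{\xkp-\xk}\leq\normm{\xk-\xkm}+C\pa{\varphi(\Psi_k-\Psi_*)-\varphi(\Psi_{k+1}-\Psi_*)}$; summing telescopes to $\sum_k\normm{\xkp-\xk}<\infty$, so $\seq{\xk}$ is Cauchy and converges to some $x^\infty\in\crit{\Phi}$, and since $\yk,\zk$ are affine combinations of consecutive iterates they share the same limit, proving~\ref{thm:Global_conv_ABPG-i}. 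For part~\ref{thm:Global_conv_ABPG-ii}, I convert the finite-length estimate into a capture argument à la Bolte--Sabach--Teboulle: with $\Phi$ KL at $\xsol$ and $\varphi$ valid on $B(\xsol,\rho)\cap\bBa{\Phi(\xsol)<\Phi<\Phi(\xsol)+\eta}$, an induction shows that if IBPG starts $\Phi$-attentively close to $\xsol$ — small $\normm{x_0-\xsol}$ and small $\Psi_0-\Phi(\xsol)$ so that $\normm{x_0-x_{-1}}+\normm{x_0-\xsol}+C\varphi(\Psi_0-\Phi(\xsol))<\rho$ — then all iterates stay in $B(\xsol,\rho)$, the previous analysis applies there, and the iterates converge to $x^\infty\in\crit{\Phi}\cap\bar B(\xsol,\rho)$; if $\Phi(x^\infty)>\Phi(\xsol)$ then $0\in\partial\Phi(x^\infty)$ would contradict $\dist(0,\partial\Phi(x^\infty))\geq1/\varphi'(\Phi(x^\infty)-\Phi(\xsol))>0$, so $\Phi(x^\infty)=\Phi(\xsol)$ and the sequence converges to a global minimizer near $\xsol$.

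\textbf{Main obstacle.} The crux is Step~1: guessing the exact merit function $\Psi_k$ and pushing the Bregman three-point manipulations so that the extrapolation cross-terms are absorbed with a uniformly positive residual weight. This is precisely where the TSP and the admissibility rule on $a_{k+1}$ are indispensable — they are calibrated so that, after invoking $\sigma_\psi$-strong convexity, the constant $\nu$ stays strictly positive; a secondary delicate point is that the memory-term coefficients depend on $k$ through $a_k$, which is why the KL phase requires the eventual constancy $a_k\equiv a$ to turn the recursion into a genuinely autonomous descent scheme amenable to the desingularization argument.
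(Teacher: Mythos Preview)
Your proposal is correct and follows essentially the same route as the paper. The paper builds the merit function $\Psi_k(\xk,\xkm)=\Phi(\xk)-\inf\Phi+a_{k-1}^{1-\kappa}L\,D_\psi(\xk,\xkm)$, derives the sufficient-decrease inequality by combining relative smoothness, convexity of $G$, two applications of the TSP (first $D_\psi(\xk,\yk)\leq(1-a_k)^\kappa D_\psi(\xk,\zk)$, then $D_\psi(\xk,\zk)\leq(1-a_{k-1})^\kappa D_\psi(\xk,\xkm)$) and the admissibility rule on $a_{k+1}$, then verifies the three descent-like conditions for $\Psi$ on $\bbR^{2n}$ and invokes the abstract KL convergence theorem; the capture argument for part~\ref{thm:Global_conv_ABPG-ii} is exactly the one you sketched, including the final contradiction via the KL inequality at $\xsol$.
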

We defer the proof to Section~\ref{pr:thm:Global_conv_ABPG}. 
\begin{remark}\label{rmk: global_conv}{\ }
	\begin{itemize}
		\item The fact that the sequences are  bounded, is a standard premise for the global convergence of the sequence in the nonconvex case. Coercivity of $\Phi$ is for instance sufficient to ensure it.  
		%       \item The second claim shows that if we start the algorithm near a global minimum then we converge to a solution of the optimization problem. 
		%       \item Here we study the global convergence when the sequence of inertial parameters $\seq{\ak}\in[\ua,\oa]\subset ]0,1[$. When $\ak\equiv1$, we recover the standard Bregman Proximal Gradient algorithm for which the convergence analysis was studied in \cite{bolte_first_2017}.
		
		\item Choosing $a_k$ constant for $k$ large enough is a standard strategy for the Lyapunov analysis in the nonconvex case. A similar strategy is also used in \cite{bolte_nonconvex_2018} and \cite{mukkamala_convex-concave_2020}. This also makes sense in practice, a fixed inertial parameter as in the heavy ball method with friction is popular for inertial algorithms. 
	\end{itemize} 
\end{remark}

The choice of $\seq{\gamma_k}$ and $\seq{\ak}$ devised in Algorithm~\ref{alg:BPGBT} is sufficient for our analysis. Actually, we only need that 
\[
\gamma_k \in ]0,1/L] \qandq (L+\gamma_k^{-1})(1-a_k)^{\kappa}(1-a_{k-1})^{\kappa} < \gamma_{k-1}^{-1}.
\]
For instance, if $\gamma_k \equiv 1/L$, then it is sufficient that $(1-a_k) < 2^{-1/\kappa}/(1-a_{k-1})$ which is easy to verify since $a_k \in ]0,1]$. 

There are many possible choices of the sequence $\seq{a_k}$ that obey the condition of Algorithm~\ref{alg:BPGBT}. For instance, take $a_k = \frac{k+1}{k+1+\alpha} \in [1/(1+\alpha),1]$, where $\alpha > 0$. To verify that the condition holds, observe that $(a_{k+1}^{1-\kappa}+1)(1-a_{k+1})^{\kappa}$ is decreasing in $k$. On the other hand, the function $h: [1/(1+\alpha),1] \mapsto a^{1-\kappa}/(1-a)^{\kappa}$ has a unique minimum on $[0,1]$ at $a_{\min} = \max\Ppa{1/(1+\alpha),(\kappa-1)/(2\kappa-1)}$. Thus, for the inequality to hold true, it is sufficient that $(a_{1}^{1-\kappa}+1)(1-a_{1})^{\kappa} < h(a_{\min})$ for all $\kappa \in ]1,2]$. This is achieved by taking $\alpha \leq 3$.

\section{Local Convergence Analysis}\label{sec:ibpg_localconv}
In this section, we present the local analysis of the Inertial Bregman Proximal Gradient. 
We start with the following definition. 
\begin{definition}\textbf{(Nondegenerate critical point for composite function)}
	We say that a critical point satisfies the nondegeneracy condition for the composite function $\Phi$ if: 
	\begin{equation}\tag{ND}\label{eq: Nondegen}
		-\nabla F(\xpa)\in\ri{\partial G(\xpa)}. 
	\end{equation}
	Let $\nd{\Phi}$ denote the set of critical points satisfying this condition for $\Phi$.  
\end{definition}
\begin{remark}\label{}{\ }
	\begin{enumerate}
		\item[(i)] Applying \cite[Proposition~10.12]{drusvyatskiy_optimality_2014}, it turns out that for $G\in\Gamma_0(\bbR^n)$, the notion of identifiable manifold is equivalent to partial smoothness around an active smooth manifold combined with the nondegeneracy condition. 
		\item[(ii)] Suppose that $\Phi$ is a semi-algebraic function, and more generally a function definable on an o-minimal structure. It follows from \cite[Theorem~4.16]{drusvyatskiy_generic_2016} that generically on $v \in \bbR^n$, the function $\Phi_v\eqdef \Phi(x)-\pscal{v,x}$ has a finite number of critical points and each critical point is nondegenerate and admits an identifiable manifold. In plain words, assumption \eqref{eq: Nondegen} is generic.
	\end{enumerate}
\end{remark}

\subsection{Finite activity identification of IBPG}
The following result shows that IBPG generates a sequence that identifies active manifolds in finite time.
\begin{lemma}[Finite time activity identification]\label{lem:LemFinite}
	Let us consider an instance of Algorithm~\ref{alg:BPGBT} such that $\seq{\xk}$ is bounded. Let $\xpa\in\crit{\Phi}$ be the limit of the sequence and suppose that $G\in\PSF{\xpa}{\calM_{\xpa}}$ with $\xpa\in\nd{\Phi}$.  Under the same Premises as Theorem~\ref{thm:Global_conv_ABPG}, there exists a constant $K$ large enough such that for all $k\geq K, \xk\in\calM_{\xpa}.$
	\begin{enumerate}[label=(\roman*)]
		\item $\calM_{\xpa}$ is an affine subspace, then $\calM_{\xpa}=\xpa+T_{\xpa}$ and $\seq{\yk},\seq{\zk}\in\calM_{\xpa},k\in K,$
		\item If moreover, $G$ is locally polyhedral around $\xpa$ then for all $k\geq K,$  the remaining sequences satisfy $\yk,\zk\in\calM_{\xpa}$ and $\nabla^2_{\calM_{\xpa}}G(\xk)=0.$ 
	\end{enumerate}
\end{lemma}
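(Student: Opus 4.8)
The plan is to reduce the first assertion to the classical finite‑identification property of partly smooth functions, which requires three ingredients at the limit: convergence of the iterates to $\xpa$, attentive convergence $G(\xk)\to G(\xpa)$, and a sequence of subgradients $v_k\in\partial G(\xk)$ with $v_k\to -\nabla F(\xpa)\in\ri\partial G(\xpa)$. The membership in the relative interior is exactly the nondegeneracy $\xpa\in\nd\Phi$, while criticality of $\xpa$ together with the sum rule for $F$ smooth already gives $-\nabla F(\xpa)\in\partial G(\xpa)$.

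First I would read off a canonical subgradient from the proximal step. The update in \eqref{BPGalgo} means $\nabla\psi(\yk)-\gak\nabla F(\yk)\in\nabla\psi(\xkp)+\gak\partial G(\xkp)$, hence
\[
v_{k+1}\eqdef \gak^{-1}\pa{\nabla\psi(\yk)-\nabla\psi(\xkp)}-\nabla F(\yk)\in\partial G(\xkp).
\]
Since the premises of Theorem~\ref{thm:Global_conv_ABPG} are in force by assumption, part~\ref{thm:Global_conv_ABPG-i} gives that $\seq{\xk},\seq{\yk},\seq{\zk}$ all converge to $\xpa$; in particular $\yk-\xkp\to 0$. The step-sizes stay uniformly positive, $\gak=a_k^{\kappa-1}/L\ge \ua^{\kappa-1}/L>0$, and $\nabla\psi$ is Lipschitz on the bounded set carrying the iterates by Premise~\ref{assump_B3}, so $\gak^{-1}\pa{\nabla\psi(\yk)-\nabla\psi(\xkp)}\to 0$; continuity of $\nabla F$ gives $\nabla F(\yk)\to\nabla F(\xpa)$. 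Therefore $v_{k+1}\to-\nabla F(\xpa)$, the third ingredient. For the second, the subgradient inequality for the convex $G$ at $\xkp$, evaluated at $\xpa$, yields $G(\xkp)\le G(\xpa)+\pscal{v_{k+1},\xpa-\xkp}$, and since $\seq{v_{k+1}}$ is bounded while $\xkp\to\xpa$ we obtain $\limsup_k G(\xkp)\le G(\xpa)$; lower semicontinuity of $G$ supplies the matching $\liminf$, so $G(\xk)\to G(\xpa)$. With the three ingredients secured, the identification result for $G\in\PSF{\xpa}{\calM_{\xpa}}$ (see \cite{hare_identifying_2004,lewis_active_2002}) produces $K\in\bbN$ with $\xk\in\calM_{\xpa}$ for all $k\ge K$.

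For item (i): when $\calM_{\xpa}$ is an affine subspace, $\calM_{\xpa}=\xpa+T_{\xpa}$ and affine subspaces are closed under affine combinations. Enlarging $K$ by one so that also $\xkm\in\calM_{\xpa}$, the identities $\zk=(1-a_{k-1})\xkm+a_{k-1}\xk$ and $\yk=(1-a_k)\zk+a_k\xk$ exhibit $\zk$ and $\yk$ as affine combinations of points of $\calM_{\xpa}$, hence $\yk,\zk\in\calM_{\xpa}$ for $k\ge K$. For item (ii): local polyhedrality of $G$ around $\xpa$ forces $\calM_{\xpa}$ to coincide locally with an affine subspace and $G$ to be affine along it, so $\yk,\zk\in\calM_{\xpa}$ follows from (i), and the restriction $G|_{\calM_{\xpa}}$ being affine near $\xpa$ gives $\nabla^2_{\calM_{\xpa}}G(\xk)=0$ for all $k\ge K$.

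The only genuinely delicate point is constructing the subgradient sequence and establishing its convergence to $-\nabla F(\xpa)$: this needs both that $\gak$ is bounded away from $0$ (immediate from $a_k\in[\ua,\oa]$) and that $\yk-\xkp\to 0$, which is precisely where the global convergence of all three sequences from Theorem~\ref{thm:Global_conv_ABPG} is indispensable. Everything else is a direct appeal to the identification machinery for partly smooth functions and to elementary affine geometry.
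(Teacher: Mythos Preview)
Your proof is correct and follows essentially the same route as the paper: extract a subgradient from the optimality condition of the proximal step, show it converges using boundedness of the iterates, Lipschitz continuity of $\nabla\psi$ and $\nabla F$ on bounded sets, and $\gak$ bounded away from zero, then invoke the finite identification theorem for partly smooth functions under nondegeneracy. The only cosmetic difference is that the paper applies identification to $\Phi$ (after noting $\Phi\in\PSF{\xpa}{\calM_{\xpa}}$ by smooth perturbation and building $v_{k+1}\in\partial\Phi(\xkp)$ with $v_{k+1}\to 0$), whereas you apply it directly to $G$ with $v_{k+1}\in\partial G(\xkp)$ and $v_{k+1}\to -\nabla F(\xpa)\in\ri\partial G(\xpa)$; these are equivalent, and your version spares the smooth-perturbation step.
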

See the Section~\ref{proof_finite} for the proof of this lemma. 

\subsection{Local linearization of IBPG}
%$\gamma\in\left[\frac{1}{\oa^{\kappa-1}L},\frac{1}{\ua^{\kappa-1}L}\right]$.
In this section, let us consider $\xpa$ a critical point of $\Phi$ and let $\calM_{\xpa}$ be a $C^2-$smooth submanifold such that $G\in\PSF{\xpa}{\calM_{\xpa}}$. Let us assume that the smooth part $F$ is $C^2$ around $\xpa$. Let us denote $T_{\xpa}\eqdef\tgtManif{\calM}{\xpa}$ and fix a stepsize $\gak \in ]0,1/L]$. For the rest of the analysis, we define the following matrices which help us to capture the local behavior of the iterates.  
\begin{align}\label{eq:matrices-analysis}
	&H_{F}\eqdef\gamma\proj{T_{\xpa}}\nabla^2F(\xpa)\proj{T_{\xpa}},\quad H_{\psi}\eqdef\proj{T_{\xpa}}\nabla^2\psi(\xpa)\proj{T_{\xpa}},\quad V\eqdef H_{\psi}-H_{F},\\
	&U\eqdef\gamma\nabla^2_{\calM_{\xpa}}\Phi(\xpa)\proj{T_{\xpa}}-H_F.\nonumber
\end{align}
where $\nabla^2_{\calM_{\xpa}}\Phi$ denotes the  Riemannian Hessian of $\Phi$ along the submanifold $\calM_{\xpa}$ and $\proj{T_{\xpa}}$ the projection onto $T_{\xpa}$. 
\begin{remark}\label{rmk:positive-U}$~$
	Since $G\in\Gamma_0(\bbR^n)$, \cite[Lemma~4.3]{liang_activity_2017} shows that $U$ is symmetric positive semi-definite under the condition that $\xpa\in\nd{\Phi}$ or that $\calM_{\xpa}$ is an affine space. Therefore $H_{\psi}+U$ is symmetric positive definite and hence invertible. Let us denote 
	\[
	W \eqdef \Ppa{H_{\psi}+U}^{-1} . 
	\]We have that $W$ is symmetric positive with eigenvalues in $\left]0,1/\sigma_{\psi}\right].$
\end{remark}
\begin{definition}\textbf{(Restricted injectivity)} We say that a critical point $\xpa$ satisfies the restricted injectivity condition if there exists $\sigma\geq 0$  such that  the following condition holds true 
	\begin{equation}\label{GRI}
		\forall h\in T_{\xpa},\quad \pscal{h,\Ppa{\nabla^2F(\xpa)-\sigma\nabla^2\psi(\xpa)}h}\geq 0.               
	\end{equation}
\end{definition}
We denote by $\gri{\Phi}$ the set of all critical points where the restricted injectivity is satisfied. In this case, the local continuity of the Hessian of $F$  implies that $\ker{(\nabla^2F(\xpa))}\cap T_{\xpa}=\{0\}$. Observe that by \cite[Theorem~4.16]{drusvyatskiy_generic_2016} and strong convexity of $\psi$, \eqref{GRI} is equivalent to the fact that $\xpa$ is a stable strong local minimizer of $\Phi$.

Let $a\in[\ua,\oa]$ and define $\rk\eqdef \xk-\xsol$ and $\dk\eqdef\begin{pmatrix} \rk \\ \rkm \end{pmatrix}$, we will need the following key matrix
\begin{equation}\label{eq:matrix-linearization}
	M\eqdef\begin{bmatrix}
		(2a-a^2)WV&(1-a)^2WV\\
		\Id&0     
	\end{bmatrix}.
\end{equation}
At this step, we can describe the local behavior of the sequence generated by Algorithm~\ref{alg:BPGBT}. The next result is a local linearization of the iterative scheme.   
\begin{proposition}\label{pro:local-linearization}
	Consider the problem \eqref{eq: generalpro} under the Premises \ref{assump_A} and \ref{assump_B}. Let suppose that the sequences produced by Algorithm~\ref{alg:BPGBT} 
	converge to  $\xpa\in\nd{\Psi}\cap\gri{\Psi}$ with $G\in\PSF{\xpa}{\calM_{\xpa}}$. If $F$ is $C^2$ locally around $\xpa$ and the inertial parameter sequence $\seq{\ak}$ satisfies $\ak \to a$ then for $k$ large enough, we have
	\begin{equation}\label{eq:local-linearized}
		d_{k+1}=Md_k+o(\normm{d_k}). 
	\end{equation}
	The little $``o"$ term disappears when $G$ is locally polyhedral and $a_k$ is chosen constant.
\end{proposition}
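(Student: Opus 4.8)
I would follow the scheme of Liang--Fadili--Peyr\'e for forward--backward-type methods, transported to the Bregman setting. Once the iterates have reached the active manifold, the plan is to read off the first-order optimality condition defining the $D$-prox step, subtract the fixed-point relation at $\xpa$, project onto $T_{\xpa}$, and expand to first order, handling the only nonsmooth object --- the subgradient of $G$ --- through the partial-smoothness calculus by realising it, up to $-\nabla F(\xpa)$, as an element of $\partial\Phi$ tending to zero, whose tangential part linearises via the Riemannian Hessian $\nabla^2_{\calM_{\xpa}}\Phi(\xpa)$.

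Concretely, first Lemma~\ref{lem:LemFinite} gives $K$ such that $\xk\in\calM_{\xpa}$ for $k\ge K$. The update reads $\nabla\psi(\yk)-\gak\nabla F(\yk)\in\nabla\psi(\xkp)+\gak\partial G(\xkp)$, so there is a selection $v_{k+1}\in\partial G(\xkp)$; since $\gak=\ak^{\kappa-1}/L\to\gamma$, $\xk,\yk\to\xpa$, Fermat's rule and continuity of $\partial G$ along $\calM_{\xpa}$ force $v_{k+1}\to v^\star=-\nabla F(\xpa)\in\partial G(\xpa)$. Combining $\yk=(1-\ak)\zk+\ak\xk$ with $\zk=(1-a_{k-1})\xkm+a_{k-1}\xk$ gives $\yk-\xpa=\bigl((1-\ak)a_{k-1}+\ak\bigr)r_k+(1-\ak)(1-a_{k-1})r_{k-1}$, which, as $\ak,a_{k-1}\to a$, equals $(2a-a^2)r_k+(1-a)^2r_{k-1}+o(\normm{d_k})$ (exactly so if $\ak\equiv a$): this is the origin of the coefficients in $M$ of \eqref{eq:matrix-linearization}. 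Subtracting $\nabla\psi(\xpa)+\gamma v^\star=\nabla\psi(\xpa)-\gamma\nabla F(\xpa)$ from the optimality relation, the terms proportional to $\gak-\gamma$ cancel because $v^\star=-\nabla F(\xpa)$; setting $w_{k+1}=v_{k+1}+\nabla F(\xkp)\in\partial\Phi(\xkp)$ (which tends to $0$ and has size $O(\normm{d_k})$), one is left with an identity relating $\nabla\psi(\xkp)-\nabla\psi(\xpa)$, $\nabla\psi(\yk)-\nabla\psi(\xpa)$, $\nabla F(\yk)-\nabla F(\xpa)$, $\nabla F(\xkp)-\nabla F(\xpa)$ and $\gak w_{k+1}$.

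Applying $\proj{T_{\xpa}}$ and expanding to first order --- using the $C^2$ regularity of $\psi$ and of $F$, and that $\xkp-\xpa$ and $\yk-\xpa$ lie in $T_{\xpa}$ up to an $O(\normm{\cdot}^2)$ normal part (curvature of $\calM_{\xpa}$) --- the four smooth differences contract, up to signs and $o(\normm{d_k})$, to $H_\psi r_{k+1}$, $H_\psi(\yk-\xpa)$, $H_F(\yk-\xpa)$ and $H_F r_{k+1}$, with $H_\psi,H_F$ as in \eqref{eq:matrices-analysis}. For the nonsmooth term I would use the Riemannian-Hessian linearisation underlying Remark~\ref{rmk:positive-U} (see \cite[Lemma~4.3]{liang_activity_2017} and the facts behind it): $\proj{T_{\xkp}}w_{k+1}=\nabla_{\calM_{\xpa}}\Phi(\xkp)$, $\nabla_{\calM_{\xpa}}\Phi$ is a $C^1$ tangent vector field on $\calM_{\xpa}$ vanishing at $\xpa$, and $w_{k+1}\to0$, so its normal component --- of size $O(\normm{d_k})$ and seen through an $O(\normm{r_{k+1}})$-Lipschitz change of tangent space --- only contributes $o(\normm{d_k})$ after projection; this yields $\proj{T_{\xpa}}w_{k+1}=\nabla^2_{\calM_{\xpa}}\Phi(\xpa)\proj{T_{\xpa}}r_{k+1}+o(\normm{d_k})$, and it is here that $\xpa\in\nd{\Phi}$ (or affineness of $\calM_{\xpa}$) is genuinely needed. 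Collecting all terms, and recalling $U=\gamma\nabla^2_{\calM_{\xpa}}\Phi(\xpa)\proj{T_{\xpa}}-H_F$ and $V=H_\psi-H_F$, one gets $(H_\psi+U)\proj{T_{\xpa}}r_{k+1}=V\bigl[(2a-a^2)r_k+(1-a)^2r_{k-1}\bigr]+o(\normm{d_k})$. Since $H_\psi+U$ is positive definite on $T_{\xpa}$ (Remark~\ref{rmk:positive-U}), it inverts there to $W$; a crude bootstrap on this identity gives $\normm{r_{k+1}}=O(\normm{d_k})$, and, using $r_j=\proj{T_{\xpa}}r_j+O(\normm{r_j}^2)$ for $j\in\{k-1,k,k+1\}$, the projectors can be dropped modulo $o(\normm{d_k})$, so that $r_{k+1}=WV\bigl[(2a-a^2)r_k+(1-a)^2r_{k-1}\bigr]+o(\normm{d_k})$; stacking this with $r_k=r_k$ is exactly $d_{k+1}=Md_k+o(\normm{d_k})$. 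When $G$ is locally polyhedral and $\ak\equiv a$, $\calM_{\xpa}$ is affine (no curvature, $\proj{T_{\xkp}}\equiv\proj{T_{\xpa}}$), $\nabla^2_{\calM_{\xpa}}G(\xk)=0$ forces $U=0$ and $\proj{T_{\xpa}}(v_{k+1}-v^\star)\equiv0$, and the inertial combination is exact; hence every remaining remainder is identically zero and the $o$ term disappears.

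\textbf{Main obstacle.} The delicate step is the one just described: promoting the motion of the set-valued (possibly non-single-valued) $\partial G$ to the linear map $\gamma\nabla^2_{\calM_{\xpa}}\Phi(\xpa)\proj{T_{\xpa}}$. This requires the full partial-smoothness toolbox --- $C^2$ smoothness of $G|_{\calM_{\xpa}}$, sharpness (so that $x\mapsto\proj{T_x}v$, $v\in\partial G(x)$, is a well-defined $C^1$ tangent field on $\calM_{\xpa}$ equal to $\nabla_{\calM_{\xpa}}G(x)$), continuity of $\partial G$ along the manifold, and careful control of the gaps $T_{\xkp}-T_{\xpa}$ and of the second fundamental form of $\calM_{\xpa}$ --- and it is exactly there that the nondegeneracy hypothesis cannot be dispensed with. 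Everything else is first-order Taylor expansion and finite-dimensional linear algebra.
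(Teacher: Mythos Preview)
Your proposal is correct and follows essentially the same route as the paper: finite identification via Lemma~\ref{lem:LemFinite}, write the $D$-prox optimality condition as an inclusion in $\partial\Phi(\xkp)$, project onto the tangent space, Taylor-expand the $C^2$ terms $\nabla\psi$ and $\nabla F$, linearise the tangential part of $\partial\Phi$ through the Riemannian Hessian $\nabla^2_{\calM_{\xpa}}\Phi(\xpa)$, and then invert $H_\psi+U$. The only presentational differences are that the paper (i) carries the parallel translation $\tau_{k+1}:T_{\xpa}\to T_{\xkp}$ explicitly and invokes Lemmas~\ref{lem: Tgtxk-to-Tgtx}--\ref{lem: Riem-Tayl} to control $\tau_{k+1}^{-1}\proj{T_{\xkp}}-\proj{T_{\xpa}}$, whereas you absorb this into the ``$O(\normm{\cdot}^2)$ normal part'' heuristic, and (ii) keeps the $k$-dependent matrices $V^k,U^k$ (coming from $\gak$) until the end and removes them via a dedicated estimate (Proposition~\ref{pro:estimation}), while you pass to the limit $\gak\to\gamma$, $\ak\to a$ earlier; both give the same $o(\normm{d_k})$ residual.
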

 Section~\ref{proof:local-linearization}  contains  the proof of this proposition. 
\begin{remark}\label{rmk: local_linearization}{\ }
	\begin{enumerate}
		\item[(i)] If \(a_k\equiv 1\), we recover the  Bregman Proximal Gradient and we have the following
		linearized iteration
		\[r_{k+1} = WVr_k + o(\|r_k\|).\]
		\item[(ii)] When the kernel is the energy, \ie $\psi = \normm{\cdot}^2/2$, a similar analysis has been done for a symmetric version of the inertial Forward-Backward \cite[Proposition~4.5]{liang_activity_2017} with a different choice of inertial parameters. Our result is however different and it involves the new matrix $H_{\psi}$ which makes the spectral analysis of $M$ more intricate.
	\end{enumerate}
\end{remark}

\subsection{Spectral properties of $M$}
Our goal now is to show local linear convergence of IBPG. Towards this, we examine the structure of the locally linearized iteration given in \eqref{eq:local-linearized}.  Our goal is to upper-bound (strictly) the spectral radius of $M$  by $1$ and subsequently draw conclusions using standard reasoning.
We will relate the eigenvalues of $M$ to those of $WV$. Let $\eta$ and $\varrho$ be an eigenvalue of $WV$ and $M$ respectively. We denote $\etam$ and $\etaM$ as the smallest and largest (signed) eigenvalues of $WV$, and $\rho(M)$ as the spectral radius of $M$. When $G\) is a general partly smooth function, then $U$ is nontrivial, we have the following proposition. 
\begin{proposition}\label{pro:poly-spect-anal}
	Let us define 
	$\Lambda\eqdef \left|q_\psi(\xpa)-\gamma\sigma\right|$ where $q_{\psi}(\xpa)=\frac{\lambda_{\max}(\nabla^2\psi(\xpa))}{\sigma_{\psi}}$. Denote $q_F(\xpa) = \frac{L}{\sigma}$.
	
	\begin{enumerate}[label=(\roman*)]
		\item  \label{pro:poly-spect-anal-1}
		Let  $\begin{pmatrix} r_1 \\ r_2\end{pmatrix}$ be an eigenvector of \(M\) corresponding to an eigenvalue \(\varrho\) then it must satisfy $r_1= \varrho r_2.$
		Besides, 
		\(r_2\) is an eigenvector of \(WV\) associated with the eigenvalue \(\eta\), where \(\eta\) and \(\varrho\) satisfy the relation
		\begin{equation}\label{eq:vrho-eta-1}
			\varrho^2-(2a-a^2)\varrho\eta-(1-a)^2\eta=0,
		\end{equation}
		and $\rho(M)\leq\rho(WV)<\Lambda$ if, and only if, 
		\[
		\frac{1}{2a^2-4a+1}<\etam.
		\] 
		
		\item \label{pro:poly-spect-anal-ii}
		If the inertial parameters are chosen such that $a\in[\ua,\oa]$ with $\ua>\sqrt[\kappa-1]{q_F(\xpa)\pa{q_\psi(\xpa)-1}}$ and $\oa<\sqrt[\kappa-1]{q_F(\xpa)\pa{q_\psi(\xpa)+1}}$ then $\Lambda<1$. 
	\end{enumerate}   
\end{proposition}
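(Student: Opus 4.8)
The plan is to handle the two parts by rather different means: part \ref{pro:poly-spect-anal-1} rests on a spectral decoupling of $M$ into the scalar quadratic \eqref{eq:vrho-eta-1} followed by a Schur--Cohn (Jury) root-localization argument, while part \ref{pro:poly-spect-anal-ii} is a direct manipulation of the absolute value in the definition of $\Lambda$, once the limiting step-size is identified.

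For \ref{pro:poly-spect-anal-1}, I would first write the eigenpair identity $M\transp{(r_1,r_2)} = \varrho\,\transp{(r_1,r_2)}$ blockwise. The bottom block gives $r_1 = \varrho r_2$ at once (and forces $r_2\neq 0$, since otherwise $r_1=0$); substituting into the top block and pulling out the scalar $(2a-a^2)\varrho+(1-a)^2$ yields $\big((2a-a^2)\varrho+(1-a)^2\big)WV r_2 = \varrho^2 r_2$, so $r_2$ is an eigenvector of $WV$, say $WVr_2=\eta r_2$, and $(\eta,\varrho)$ satisfy $\varrho^2-(2a-a^2)\varrho\eta-(1-a)^2\eta=0$, i.e. \eqref{eq:vrho-eta-1}; conversely, each eigenvalue $\eta$ of $WV$ feeds at most two eigenvalues of $M$ through this quadratic, so $\rho(M)$ is the largest modulus of a root of \eqref{eq:vrho-eta-1} as $\eta$ ranges over the spectrum of $WV$. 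Next I would record two structural facts. First, $W\succ 0$ is symmetric and $V$ is symmetric, so $WV$ is similar to $W^{1/2}VW^{1/2}$ and has only real eigenvalues; moreover $L$-relative smoothness \ref{assump_B4} with $\gamma\leq 1/L$ gives $H_F\preceq H_\psi$, hence $V=H_\psi-H_F\succeq 0$, while restricted injectivity \eqref{GRI} gives $H_F\succeq\gamma\sigma H_\psi$, hence $V\preceq(1-\gamma\sigma)H_\psi$; combining with the spectral bound on $W$ from Remark~\ref{rmk:positive-U} (equivalently $W^{1/2}H_\psi W^{1/2}\preceq\Id$ because $H_\psi+U\succeq H_\psi$) yields $0\leq\etam\leq\etaM\leq 1-\gamma\sigma$, and since $q_\psi(\xpa)\geq 1$ and $\gamma\sigma\leq\gamma L\leq 1$ this already delivers $\rho(WV)=\etaM\leq q_\psi(\xpa)-\gamma\sigma=\Lambda$. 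Second, I would apply the Schur--Cohn test to the monic quadratic \eqref{eq:vrho-eta-1} read as $p(\varrho)=\varrho^2+b\varrho+c$ with $b=-(2a-a^2)\eta$, $c=-(1-a)^2\eta$: all its roots lie strictly inside the unit disk iff $p(1)>0$, $p(-1)>0$ and $1-c>0$. A short computation gives $p(1)=1-\eta$, $1-c=1+(1-a)^2\eta$, and $p(-1)=1-\eta(2a^2-4a+1)$. Imposing these for every eigenvalue $\eta\in[\etam,\etaM]$, using $\etaM\leq 1-\gamma\sigma<1$ (so $p(1)>0$ is automatic) and $\etam\geq 0$ (so $1-c>0$ is automatic), leaves $\eta(2a^2-4a+1)<1$ for all such $\eta$ as the only effective condition; its worst case (which occurs at $\eta=\etam$ when $2a^2-4a+1<0$) rearranges to exactly $\tfrac{1}{2a^2-4a+1}<\etam$, and this chain of equivalences is the asserted ``if and only if''. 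Re-running the same root-localization with threshold $\rho(WV)$ and invoking $\rho(WV)\leq\Lambda$ gives the stated ordering $\rho(M)\leq\rho(WV)<\Lambda$.

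For \ref{pro:poly-spect-anal-ii}, since $a_k\to a$ the limiting step-size is $\gamma=a^{\kappa-1}/L$, so with $q_F(\xpa)=L/\sigma$ one has $\gamma\sigma=a^{\kappa-1}/q_F(\xpa)$ and $\Lambda=\big|q_\psi(\xpa)-a^{\kappa-1}/q_F(\xpa)\big|$. Then $\Lambda<1$ is equivalent to $q_\psi(\xpa)-1<a^{\kappa-1}/q_F(\xpa)<q_\psi(\xpa)+1$, i.e. to $q_F(\xpa)\big(q_\psi(\xpa)-1\big)<a^{\kappa-1}<q_F(\xpa)\big(q_\psi(\xpa)+1\big)$, where the left-hand quantity is nonnegative because $q_\psi(\xpa)\geq 1$ by $\sigma_\psi$-strong convexity. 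Since $\kappa\in]1,2]$, the map $t\mapsto t^{1/(\kappa-1)}$ is increasing on $[0,\infty[$, so taking the $(\kappa-1)$-th root preserves the inequalities and gives $\sqrt[\kappa-1]{q_F(\xpa)(q_\psi(\xpa)-1)}<a<\sqrt[\kappa-1]{q_F(\xpa)(q_\psi(\xpa)+1)}$, which is precisely what the hypotheses $a\in[\ua,\oa]$ with $\ua>\sqrt[\kappa-1]{q_F(\xpa)(q_\psi(\xpa)-1)}$ and $\oa<\sqrt[\kappa-1]{q_F(\xpa)(q_\psi(\xpa)+1)}$ guarantee.

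The step I expect to be most delicate is the reduction in \ref{pro:poly-spect-anal-1}: one must track signs carefully in the Schur--Cohn inequalities (the parabola $2a^2-4a+1$ changes sign at $a=1-1/\sqrt{2}$, so the ``worst'' eigenvalue switches between $\etam$ and $\etaM$), and one must be sure that the two auxiliary inequalities are genuinely subsumed — this is where the Loewner sandwich $0\preceq V\preceq(1-\gamma\sigma)H_\psi$, issued from the interplay of relative smoothness and restricted injectivity, together with the spectral bound on $W$, is essential rather than cosmetic. Part \ref{pro:poly-spect-anal-ii}, by contrast, is routine once the substitution $\gamma=a^{\kappa-1}/L$ is in place.
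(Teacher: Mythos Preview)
Your blockwise derivation of $r_1=\varrho r_2$ and of the quadratic \eqref{eq:vrho-eta-1} is exactly what the paper does (the paper writes $H_\psi^{-1}V$ in place of $WV$, apparently specializing silently, but the algebra is identical). From that point on the two arguments diverge: the paper does not carry out the root analysis at all but simply invokes \cite[Proposition~4.7, Corollary~4.9]{liang_activity_2017} for the ``if and only if'' and Lemma~\ref{lem:spect_properysisty}\ref{lem:spect_property-iv} for $\rho(WV)<\Lambda$. Your self-contained route---a Loewner sandwich $0\preceq V\preceq(1-\gamma\sigma)H_\psi$ combined with $W^{1/2}H_\psi W^{1/2}\preceq\Id$ to control the spectrum of $WV$, and a Schur--Cohn test on \eqref{eq:vrho-eta-1}---is therefore more explicit than what the paper offers, and your proof of \ref{pro:poly-spect-anal-ii} is correct and again more detailed than the paper's deferral.

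There is, however, a genuine gap in \ref{pro:poly-spect-anal-1}. Your Jury test is run at threshold $1$, so what it yields is a criterion for $\rho(M)<1$, not for the stated $\rho(M)\le\rho(WV)$. The closing sentence ``re-running the same root-localization with threshold $\rho(WV)$'' does not fix this: rescaling by $R=\etaM$ and evaluating the rescaled polynomial at $1$ gives, at $\eta=\etaM$, the quantity $(1-a)^2\bigl(1-1/\etaM\bigr)$, which is \emph{negative} for $\etaM<1$ and $a<1$, so the unit-disk test fails outright and the inequality $\rho(M)\le\rho(WV)$ cannot be read off this way. Separately, your case analysis handles only $2a^2-4a+1<0$: when $2a^2-4a+1>0$ the worst case of $\eta(2a^2-4a+1)<1$ is attained at $\etaM$, not $\etam$, and the resulting condition is $\etaM<1/(2a^2-4a+1)$, which does not rearrange to the displayed $\tfrac{1}{2a^2-4a+1}<\etam$. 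In short, you have established the downstream-useful bound $\rho(M)<1$ (this is all Theorem~\ref{thm:local-linear} actually needs), but not the sharper ordering $\rho(M)\le\rho(WV)$ as an equivalence; for that one has to follow the finer root comparison carried out in \cite{liang_activity_2017}, which the paper cites rather than reproduces.
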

See section~\ref{pr:pro:poly-spect-anal} for the proof.  
\begin{remark}\label{rm:poly-spect-anal}$~$
	\begin{itemize}
		%		\item[(i)]\label{rmk:spectral-matrice-i} Let us observe that when  $\forall k\in\bbN,\gak\equiv\frac{\ak^{\kappa-1}}{L}$, we have $\Lambda= \left|\frac{\lambda_{\max}(\nabla^2\psi(\xpa))}{\sigma_{\psi}}-\ak^{\kappa-1}\frac{\sigma}{L}\right|$. $\Lambda$ can be seen as the difference between the condition number $q_{\psi}(\xpa)$ and the inverse of the condition number of $\nabla^2F(\xpa)$.
		\item[(ii)] \label{rmk:spectral-matrice-ii} In the euclidean case, $\Lambda=\Ppa{1-\ak^{\kappa-1}\frac{\sigma}{L}} < 1$ since $\sigma \leq L$ and $\ak\in]0,1]$.
		
		\item[(iii)] \label{rmk:spectral-matrice-iii}  Claim~\ref{pro:poly-spect-anal-ii} states that $\rho(M) < 1$ whenever $a$ is small enough while being bounded away from zero. 
		%To ensure local linear convergence of the algorithm, we need to bound $\rho(M)$ by 1. In the general setting, it is not a simple task to know if $\Lambda<1$. Indeed, the eigenvalues of the matrices involved in $M$ all depend on the local condition number of $\psi$ and $F$. 
		%Here as a partial solution, we impose that the inertial parameters satisfy a certain upper and lower bound which implies that $\Lambda<1$.
	\end{itemize}  	
\end{remark}

\subsection{Local linear convergence}
We can now state the local linear convergence result.
\begin{theorem}\label{thm:local-linear}\textbf{(Local linear convergence)}
	Consider the problem \eqref{eq: generalpro} under Assumptions~\ref{assump_A}-\ref{assump_B}. Let $\seq{\xk}$ be a bounded sequence produced by Algorithm~\ref{alg:BPGBT} that converges to $\xpa\in\nd{\Phi}\cap\gri{\Phi}$ with $G\in\PSF{\xpa}{\calM_{\xpa}}$, and assume that $F$ is $C^2$ locally around $\xpa$. If $a$ is such that Proposition~\ref{pro:poly-spect-anal}\ref{pro:poly-spect-anal-ii} holds, then $\seq{\xk}$ converges locally linearly to $\xpa$. More precisely, given any $\rho\in\left[\rho(M),1\right[$, there exist $K\in\bbN$ large enough such that $\forall k\geq K$ ,
	\begin{equation}
		\frac{\normm{\zk-\xpa}}{\normm{z_K-\xpa}}=O(\rho^{k-K}). 
	\end{equation} 
\end{theorem}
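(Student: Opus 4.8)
The plan is to chain together the three results already in place: finite activity identification (Lemma~\ref{lem:LemFinite}), the local linearization of the scheme (Proposition~\ref{pro:local-linearization}), and the spectral estimate for $M$ (Proposition~\ref{pro:poly-spect-anal}). Recall $\rk = \xk-\xpa$ and $\dk = \begin{pmatrix}\rk\\\rkm\end{pmatrix}$. For $k$ large, the iterates lie on $\calM_{\xpa}$ and $\seq{\dk}$ obeys the perturbed linear recursion $d_{k+1} = Md_k + o(\normm{d_k})$ with $\rho(M)<1$; the heart of the argument is to turn this into an honest per-step geometric contraction in a suitably adapted norm, and then to read off the rate for $\zk-\xpa$, which is a convex combination of the components of $\dk$.

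First I would record that, under the stated hypotheses, Proposition~\ref{pro:poly-spect-anal} gives $\rho(M)\leq\rho(WV)<\Lambda<1$. Fix $\rho\in[\rho(M),1[$ and choose $\rho'\in\,]\rho(M),\rho]$ (if $\rho=\rho(M)$, take any $\rho'\in\,]\rho(M),1[$; the bound is then geometric for that $\rho'$, and exact at $\rho(M)$ precisely when $M$ has no nontrivial Jordan block on its peripheral spectrum). By the classical fact that for every $\epsilon>0$ there is an inner-product norm $\normm{\cdot}_\star$ on $\bbR^{2n}$ with $\normm{M}_\star\leq\rho(M)+\epsilon$, pick $\normm{\cdot}_\star$ with $\normm{M}_\star\leq\rho'$ and constants $c_1,c_2>0$ such that $c_1\normm{v}\leq\normm{v}_\star\leq c_2\normm{v}$ for all $v$. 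Since the iterates converge to $\xpa$ (Theorem~\ref{thm:Global_conv_ABPG}), $d_k\to 0$, so Proposition~\ref{pro:local-linearization} furnishes an index $K_0$ and scalars $\delta_k\to 0^+$ with $d_{k+1}=Md_k+e_k$, $\normm{e_k}\leq\delta_k\normm{d_k}$, for $k\geq K_0$; passing to $\normm{\cdot}_\star$,
\[
\normm{d_{k+1}}_\star \leq \normm{Md_k}_\star + \normm{e_k}_\star \leq \Ppa{\rho' + \frac{c_2}{c_1}\delta_k}\normm{d_k}_\star .
\]
Taking $K$ larger than $K_0$, than the identification index of Lemma~\ref{lem:LemFinite}, and large enough that $\tfrac{c_2}{c_1}\delta_k\leq\rho-\rho'$ for $k\geq K$ (possible since $\delta_k\to0$), telescoping from $K$ yields $\normm{d_k}_\star\leq\rho^{k-K}\normm{d_K}_\star$, hence $\normm{d_k}\leq\tfrac{c_2}{c_1}\rho^{k-K}\normm{d_K}$ for all $k\geq K$.

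It remains to transfer this to $\zk$. From the $\zk$-update in \eqref{BPGalgo} one has $\zk-\xpa=(1-a_{k-1})r_{k-1}+a_{k-1}r_k$ with $a_{k-1}\in[\ua,\oa]\subset\,]0,1[$, and both $r_{k-1}$ and $r_k$ are coordinates of $\dk$, so $\normm{\zk-\xpa}\leq\normm{\dk}\leq\tfrac{c_2}{c_1}\normm{d_K}\,\rho^{k-K}$ for $k\geq K$. Since $\normm{z_K-\xpa}$ is a fixed positive constant (if it vanishes, the sequence equals $\xpa$ from index $K$ on and the estimate is trivial), dividing gives $\frac{\normm{\zk-\xpa}}{\normm{z_K-\xpa}}=O(\rho^{k-K})$, the implied constant possibly depending on $K$. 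In the locally polyhedral case with $a_k$ eventually constant, the remainder $e_k$ vanishes, so $d_k=M^{k-K}d_K$ exactly and the same conclusion follows directly from $\normm{M^{k-K}}_\star\leq\rho'^{\,k-K}$.

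I expect the only genuinely delicate step to be the passage from $\rho(M)<1$ to a bona fide contraction in the presence of the $o(\normm{d_k})$ remainder: the adapted-norm device handles it, but it forces care at the endpoint $\rho=\rho(M)$, where Jordan structure on peripheral eigenvalues introduces a polynomial factor, so the clean $O(\rho^{k-K})$ estimate is safest for $\rho>\rho(M)$ and holds at $\rho=\rho(M)$ only when $M$ is diagonalizable there. The remaining work — tracking that the perturbation is truly negligible (which uses both the established convergence $d_k\to0$ and the finite identification making the linearization valid), and selecting one common large index $K$ that simultaneously serves activity identification, validity of \eqref{eq:local-linearized}, and $\tfrac{c_2}{c_1}\delta_k\leq\rho-\rho'$ — is routine bookkeeping.
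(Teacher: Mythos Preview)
Your proposal is correct and follows exactly the route the paper takes: the paper's own proof is a three-line sketch that invokes global convergence, Proposition~\ref{pro:local-linearization}, and Proposition~\ref{pro:poly-spect-anal}, then appeals to ``standard arguments'', and what you have written is precisely a careful unpacking of those standard arguments (adapted-norm contraction absorbing the $o(\normm{d_k})$ term, then reading off the rate for $\zk$ as a convex combination of the components of $\dk$). Your caveat about the endpoint $\rho=\rho(M)$ and Jordan structure is apt and more careful than the paper itself.
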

\begin{proof}
	First use the global convergence result combined with the local linearization Proposition~\ref{pro:local-linearization} and the spectral analysis of $M$ in Proposition~\ref{pro:poly-spect-anal}. Then conclude by standard arguments.
\end{proof}

\section{Escape Property in the Smooth Case}\label{sec:ibpg_trap_avoidance}
\subsection{Trap avoidance for the inertial mirror descent}
Throughout this subsection, we assume that $G\equiv0$. Thus IBPG reduces to the Inertial Mirror Descent (IMD)  which is a variant of the Improved Interior Gradient Algorithm \cite{auslender_interior_2006}. We assume that the algorithm is run with a fixed stepsize and a fix inertial parameter. The scheme is summarized in Algorithm~\ref{alg:IMD} for the reader's convenience. 
\begin{algorithm}[htbp]
	\caption{Inertial Mirror Descent}
	\label{alg:IMD}
	\textbf{Parameters:} $\kappa\in]1,2]$\;
	\textbf{Initialization:} $z_{-1}=x_{-1}, z_0=\xo\in\mathbb{R}^n$, $a_{-1}=a_0=1$,  and fix $a\in]0,1]$\;
	\For{$k=0,1,\ldots$}{
		\vspace{-0.25cm}
		\begin{flalign*} %\tag{IBPG}\label{BPGalgo} 
			&\begin{aligned} 
				&\yk= \zk+ a(\xk-\zk); \\
				&\xkp = \nabla\psi^{-1}\Ppa{\nabla\psi(\yk)-\gamma\nabla F(\yk)}, \quad \gamma=\frac{a^{\kappa-1}}{L}; \\
				&\zkp=\xk+a(\xkp-\xk). \\  
			\end{aligned}&
		\end{flalign*}
	}
\end{algorithm}

\begin{theorem}\label{thm: iga_glbl_escp}\textbf{(Trap avoidance of IMD.)} 
	Consider the minimization problem \eqref{eq: generalpro} with $G\equiv0$ under Assumption~\ref{assump_A}-\ref{assump_B} and let $\xpa\in\crit{\Phi}$. Then for almost all initilizers $(x_0,x_{-1})$ of IMD, the generated sequences converge to a critical point that is not a strict saddle point. 
\end{theorem}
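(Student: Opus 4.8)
The plan is to realise IMD (Algorithm~\ref{alg:IMD}) as a $C^1$ dynamical system on $\bbR^{2n}$, to identify the strict saddles of $\Phi$ with \emph{linearly unstable} fixed points of this system, and then to conclude with the center-stable manifold theorem and a measure-theoretic argument following \cite{lee_first-order_2019}. Throughout I take $F$ to be $C^2$ (needed anyway for strict saddles to be meaningful) and, to avoid a degenerate edge case, $a\in]0,1[$ (the case $a=1$ is plain Bregman proximal gradient and is handled the same way after replacing $\gamma=1/L$ by $\gamma<1/L$). Eliminating $\zk=(1-a)\xkm+a\xk$ gives $\yk=(2a-a^2)\xk+(1-a)^2\xkm$, so that with $w_k\eqdef(\xk,\xkm)$ the iteration reads $w_{k+1}=g(w_k)$, where
\[
g(u,v)\eqdef\Ppa{P\Ppa{(2a-a^2)u+(1-a)^2v},\,u},\qquad P\eqdef\nabla\psi^*\circ\Ppa{\nabla\psi-\gamma\nabla F},\quad\gamma=\frac{a^{\kappa-1}}{L}.
\]
Since $\psi$ is $C^2$, strongly convex and Legendre, $\nabla\psi^*=(\nabla\psi)^{-1}$ is a $C^1$ diffeomorphism of $\bbR^n$ with $\nabla^2\psi^*(\nabla\psi(x))=\nabla^2\psi(x)^{-1}$, so $g\in C^1(\bbR^{2n};\bbR^{2n})$; moreover $w$ is a fixed point of $g$ iff $w=(\xpa,\xpa)$ with $\nabla F(\xpa)=0$, i.e. iff $\xpa\in\crit{\Phi}$, and any convergent orbit of $g$ converges to such a point.

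\textbf{Step 2 ($g$ is a local diffeomorphism).} A block-determinant computation gives $\det Dg(u,v)=(-1)^n(1-a)^{2n}\det DP\Ppa{(2a-a^2)u+(1-a)^2v}$, where $DP(x)=\nabla^2\psi^*\Ppa{\nabla\psi(x)-\gamma\nabla F(x)}\Ppa{\nabla^2\psi(x)-\gamma\nabla^2F(x)}$. The first factor is positive definite, and $L$-relative smoothness ($\nabla^2F(x)\preceq L\nabla^2\psi(x)$ on $\bbR^n$) together with $\gamma L=a^{\kappa-1}<1$ yields $\nabla^2\psi(x)-\gamma\nabla^2F(x)\succeq(1-a^{\kappa-1})\nabla^2\psi(x)\succeq(1-a^{\kappa-1})\sigma_\psi\,\Id\succ0$; hence $DP(x)$ is invertible for every $x$ and $\det Dg$ never vanishes. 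Thus $g$ is a local $C^1$ diffeomorphism, and covering $\bbR^{2n}$ by countably many open sets on which $g$ restricts to a diffeomorphism onto its image shows that $g^{-1}(S)$ is Lebesgue-negligible whenever $S$ is.

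\textbf{Step 3 (strict saddles are unstable).} With $G\equiv0$ the active manifold is $\bbR^n$, so in the notation of \eqref{eq:matrices-analysis} one has $T_{\xpa}=\bbR^n$, $U=0$, $W=\nabla^2\psi(\xpa)^{-1}$ and $V=\nabla^2\psi(\xpa)-\gamma\nabla^2F(\xpa)$; consequently $WV=DP(\xpa)$ and the matrix $M$ of \eqref{eq:matrix-linearization} is exactly $Dg(\xpa,\xpa)$. Let $\xpa\in\strisad{\Phi}$, so $\nabla^2F(\xpa)$ has a negative eigenvalue. Since $\nabla^2\psi(\xpa)^{-1}\nabla^2F(\xpa)$ is similar to the symmetric matrix $\nabla^2\psi(\xpa)^{-1/2}\nabla^2F(\xpa)\nabla^2\psi(\xpa)^{-1/2}$, which is congruent to $\nabla^2F(\xpa)$, Sylvester's law of inertia shows it also has a negative eigenvalue; hence $WV$ has a real eigenvalue $\mu>1$. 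Plugging $\mu$ into the relation \eqref{eq:vrho-eta-1} (pure linear algebra of the block structure of $M$), the polynomial $\varrho\mapsto\varrho^2-(2a-a^2)\mu\varrho-(1-a)^2\mu$ equals $1-\mu\Ppa{(2a-a^2)+(1-a)^2}=1-\mu<0$ at $\varrho=1$ and tends to $+\infty$, so it has a root $\varrho>1$; by Proposition~\ref{pro:poly-spect-anal}\ref{pro:poly-spect-anal-1} this $\varrho$ is an eigenvalue of $M=Dg(\xpa,\xpa)$ (with eigenvector $(\varrho v,v)$, $v$ an eigenvector of $WV$ for $\mu$). Therefore $\rho(Dg(\xpa,\xpa))>1$: every strict saddle gives a linearly unstable fixed point of $g$.

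\textbf{Step 4 (conclusion).} By \cite[Theorem~III.7]{shub_global_1987}, at each $(\xpa,\xpa)$ with $\xpa\in\strisad{\Phi}$ there are an open neighborhood $B_{\xpa}\subset\bbR^{2n}$ and an embedded $C^1$ disc $\mathcal{W}_{\xpa}\subset B_{\xpa}$ of dimension at most $2n-1$ (the number of eigenvalues of $Dg(\xpa,\xpa)$ of modulus $\le1$) such that any orbit of $g$ staying in $B_{\xpa}$ lies in $\mathcal{W}_{\xpa}$; in particular $\mathcal{W}_{\xpa}$ is Lebesgue-null. Extract, by the Lindel\"of property, a countable subfamily $\{B_{\xpa_j}\}_{j\in\bbN}$ covering the diagonal copy of $\strisad{\Phi}$ in $\bbR^{2n}$. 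If $\seq{\xk}$ converges to a strict saddle $\xpa$, then $(\xk,\xkm)\to(\xpa,\xpa)\in B_{\xpa_j}$ for some $j$, so for $k$ large $(\xk,\xkm)$ stays in $B_{\xpa_j}$, whence $(\xk,\xkm)\in\mathcal{W}_{\xpa_j}$ and $(x_0,x_{-1})\in\bigcup_{j\in\bbN}\bigcup_{k\in\bbN}g^{-k}(\mathcal{W}_{\xpa_j})$, a countable union of Lebesgue-null sets by Step~2. Hence the set of initializers whose IMD orbit converges to a strict saddle is negligible; combining this with convergence of the iterates to a point of $\crit{\Phi}$ (Theorem~\ref{thm:Global_conv_ABPG} with $G\equiv0$, its hypotheses being in force), we conclude that for almost every $(x_0,x_{-1})$ the generated sequences converge to a critical point of $\Phi$ that is not a strict saddle. \emph{Main difficulty.} The crux is Step~3: transferring a direction of negative curvature of $\nabla^2F(\xpa)$ through the non-symmetric preconditioner $\nabla^2\psi(\xpa)^{-1}$ (done by Sylvester's inertia theorem) and then through the companion-type block $Dg$ via the scalar relation \eqref{eq:vrho-eta-1}; a subsidiary point is that $g$ must be a local diffeomorphism \emph{globally}, which is exactly where the stepsize cap $\gamma=a^{\kappa-1}/L\le1/L$ is used.
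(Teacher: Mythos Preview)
Your proposal is correct and follows essentially the same strategy as the paper: rewrite IMD as a $C^1$ map $\mathbf{T}$ on $\bbR^{2n}$, show $\det D\mathbf{T}\neq0$ everywhere (using relative smoothness and the stepsize cap), show that strict saddles of $F$ give fixed points with an eigenvalue of modulus $>1$, and conclude via the center--stable manifold theorem. The paper packages the last step by citing \cite[Corollary~1]{lee_first-order_2019}, whereas you spell out the Lindel\"of/countable-union argument directly; both are fine.

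The only noteworthy difference is in how the unstable eigenvalue is produced. The paper argues by a similarity transform $\mathrm{diag}(H_\psi^{-1/2},H_\psi^{-1/2})\,D\mathbf{T}(\xpa,\xpa)\,\mathrm{diag}(H_\psi^{1/2},H_\psi^{1/2})$ and then exhibits an eigenvector attached to an eigenvalue $\eta>1$ of $\Id-\gamma H_\psi^{-1/2}\nabla^2F(\xpa)H_\psi^{-1/2}$. Your route---Sylvester's law of inertia to get an eigenvalue $\mu>1$ of $WV$, then the scalar relation \eqref{eq:vrho-eta-1} evaluated at $\varrho=1$ to force a root $\varrho>1$---is a clean and fully explicit alternative. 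One small remark: Proposition~\ref{pro:poly-spect-anal}\ref{pro:poly-spect-anal-1} is stated only in the direction ``eigenvalue of $M$ $\Rightarrow$ relation with an eigenvalue of $WV$''; you use the (easy) converse, which you correctly verify by the eigenvector $(\varrho v,v)$.
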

We defer the proof to Section~\ref{pr:thm: iga_glbl_escp}. Clearly, this means that if $(x_0,x_{-1})$ is drawn at random from a distribution with has a density \wrt Lebesgue measure, then with probability one, IMD converges to a critical point which is not a strict saddle.

\subsection{Challenges of the escape property for IBPG in the nonsmooth case}
In this section, we discuss the difficulties and challenges posed by the case where $G\neq0$ in \ref{eq: generalpro} is nonsmooth. First, one has to adapt the notion of strict saddles to the nonsmooth setting. Adopting the terminology in \cite{davis_proximal_2022}, we introduce the following notion of active strict saddles.

\begin{definition}[Active strict saddle] \label{def: Active_stritsad}
	Let us consider  $g:\bbR^n\to\overline{\bbR}$. We say that a point $\xpa$  is an active strict saddle point  of the nonsmooth function $g$ if 
	\begin{enumerate} [label=(\roman*)]
		\item\label{def: Active_stritsad-i}$\xpa\in\crit{g}$ \ie,  $0\in\partial g(\xpa)$.
		\item\label{def: Active_stritsad-ii} There exists an active manifold $\calM$ at the point $\xpa$.   
		\item\label{def: Active_stritsad-iii} The Riemannian Hessian of $g$ at $\xpa$ has a at least one negative eigenvalue. %direction of negative curvature. The function  $g$ decreases along some direction $v\in\tgtManif{\calM}{\xpa}$ \ie there exists some smooth curve $\tau\subset\calM$ with $\tau(0)=0$ and $\dot{\tau}(0)=v$ such that: 
		%		\[
		%		\left.\frac{d^2}{dt^2}\pa{g\circ\tau}(t)\right|_{t=0}<0. 
		%		\]       
	\end{enumerate}
	Let us denote by $\actstrisad{g}$ the set of all active strict saddle points of $g$. 
\end{definition}

In the Euclidean setting, the authors in \cite{davis_proximal_2022} showed that proximal methods with weakly convex and definable functions generically avoid active strict saddle points. The core of their proofs is again the center stable manifold theorem. In turn, the regularity required by this theorem heavily rely on the properties of the proximal mapping in the euclidean setting, and in particular its firm nonexpansiveness, as well as Lipschitz continuity of the gradient of the smooth part.

%\begin{remark}\label{rmk: activ_saddle}{\ }
%	\begin{itemize}
	%		\item If every critical point of $g$ is either an active strict saddle point or a local/global minimizer thus $g$ has the strict saddle property. 
	%		\item  The condition \ref{def: Active_stritsad-iii} precisely mean that there exists $v\in\tgtManif{\calM}{\xpa}$ such that we have $d^2f_{\calM}(\xpa)(v)<0$, where $d^2f_{\calM}(\cdot)(\cdot)$ denotes the second order  subderivative along the manifold $\calM$ (for more details see \cite[Chapter 13, Section B]{rockafellar_variational_1998} ).    
	%	\end{itemize}
%\end{remark} 

In the Bregman setting, these properties are not true anymore. In fact, the proof strategy consists in characterizing the regularity and the spectrum of the following fixed point mapping that characterizes Algorithm~\ref{alg:BPGBT},  
\[
\mathbf{T}\pa{x_2,x_1}=\begin{bmatrix}
	\pa{\nabla\psi+\gamma\partial G}^{*}\Ppa{\nabla\psi\Pa{y(x_2,x_1)}-\gamma\nabla F(y(x_2,x_1))}\\
	x_2
\end{bmatrix},
\]
where
\[
a\in[\ua,\oa] \qandq y(x_2,x_1)=\pa{2a-a^2}x_2+ \pa{1-a}^2x_1.
\]
One has that $\xpa\in\crit{\Phi}$ if and only if $(\xpa,\xpa)$ is a fixed point of the operator $\mathbf{T}$. We have the following intermediate result.
\begin{lemma}\label{lem:unstablebpg} 
	Consider the minimization problem \eqref{eq: generalpro} under Premises~\ref{assump_A}-\ref{assump_B} and let $\xpa\in\crit{\Phi}$. Then $\mathbf{T}$ is a $C^1-$smooth function in a neighborhood of $(\xpa,\xpa)$. Besides, if $\xpa$ is an active strict saddle of $\Phi$ then the Jacobian $D \mathbf{T}(\xpa,\xpa)$ has a real eigenvalue that is strictly greater than one. 
\end{lemma}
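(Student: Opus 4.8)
The plan is to prove the two assertions in turn. Write $\mathbf{T}(x_2,x_1)=\Pa{\mathcal{P}(y(x_2,x_1)),\,x_2}$, where $\mathcal{P}\eqdef\Ppa{\nabla\psi+\gamma\partial G}^{-1}\circ\Ppa{\nabla\psi-\gamma\nabla F}$ is the Bregman forward--backward operator and $y(x_2,x_1)\eqdef(2a-a^2)x_2+(1-a)^2x_1$ is linear. Since $(2a-a^2)+(1-a)^2=1$ we have $y(\xpa,\xpa)=\xpa$, so both the regularity and the Jacobian of $\mathbf{T}$ at $(\xpa,\xpa)$ are governed by those of $\mathcal{P}$ at $\xpa$. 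As $\psi\in C^2$ and $F$ is $C^2$ near $\xpa$ (which is what makes $D\mathbf{T}$ exist and is implicit in speaking of the Riemannian Hessian of $\Phi$), the only delicate ingredient is the Bregman prox of $G$. Here I would use that $\xpa$ admits an active $C^2$ manifold $\calM_{\xpa}$ with $G\in\PSF{\xpa}{\calM_{\xpa}}$ and $\xpa\in\nd{\Phi}$ (i.e.\ $\calM_{\xpa}$ is identifiable): the Bregman prox then maps a neighborhood of $\xpa$ into $\calM_{\xpa}$, on which $G$ restricts to a $C^2$ function, and the optimality condition characterizing $p=\mathcal{P}(y)$ reduces to a $C^1$ system of equations in $p\in\calM_{\xpa}$ whose Jacobian in $p$ at the base point is, up to the orthogonal projection onto $T_{\xpa}$, the operator $H_\psi+U$ of \eqref{eq:matrices-analysis}, which is symmetric positive definite by Remark~\ref{rmk:positive-U}. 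The implicit function theorem then yields $\mathcal{P}\in C^1$ near $\xpa$, hence $\mathbf{T}\in C^1$ near $(\xpa,\xpa)$; this is precisely the computation carried out in the proof of Proposition~\ref{pro:local-linearization}.

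For the spectral claim, I would first identify $D\mathbf{T}(\xpa,\xpa)$. By the chain rule and the linearization $D\mathcal{P}(\xpa)=WV$ on $T_{\xpa}$ established in Proposition~\ref{pro:local-linearization} (with $W\eqdef(H_\psi+U)^{-1}$ and $V\eqdef H_\psi-H_F$ as in \eqref{eq:matrices-analysis}), differentiating $\mathbf{T}=(\mathcal{P}\circ y,\ x_2)$ gives $D\mathbf{T}(\xpa,\xpa)=M$, the matrix of \eqref{eq:matrix-linearization}. Next I would translate the active strict saddle hypothesis into a statement about the spectrum of $WV$: from \eqref{eq:matrices-analysis} one has $W^{-1}-V=(H_\psi+U)-V=\gamma\nabla^2_{\calM_{\xpa}}\Phi(\xpa)$ on $T_{\xpa}$, so the Riemannian Hessian of $\Phi$ at $\xpa$ has a negative eigenvalue if and only if $W^{-1}-V$ fails to be positive semidefinite on $T_{\xpa}$; the congruence $W^{1/2}(W^{-1}-V)W^{1/2}=\Id-W^{1/2}VW^{1/2}$ together with the similarity $WV=W^{1/2}\Pa{W^{1/2}VW^{1/2}}W^{-1/2}$ (and symmetry of $W^{1/2}VW^{1/2}$, since $V$ and $W$ are symmetric) shows this is equivalent to $WV$ having a real eigenvalue $\eta>1$. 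Thus, under the active strict saddle assumption, such an $\eta>1$ exists.

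It remains to push $\eta$ through to the spectrum of $M$. By Proposition~\ref{pro:poly-spect-anal}\ref{pro:poly-spect-anal-1}, if $\eta$ is an eigenvalue of $WV$ with eigenvector $r_2$ and $\varrho$ solves $p(\varrho)\eqdef\varrho^2-(2a-a^2)\eta\varrho-(1-a)^2\eta=0$, then $(\varrho r_2,r_2)$ is an eigenvector of $M$ for $\varrho$; so it suffices to exhibit a root of $p$ in $]1,+\infty[$. Now $p(1)=1-\eta\Pa{(2a-a^2)+(1-a)^2}=1-\eta<0$ because $\eta>1$, while $p(\varrho)\to+\infty$ as $\varrho\to+\infty$, so the intermediate value theorem gives a real root $\varrho>1$. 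This $\varrho$ is a real eigenvalue of $M=D\mathbf{T}(\xpa,\xpa)$ that is strictly greater than one, which is the claim.

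I expect the genuine obstacle to be the first part: unlike in the Euclidean case, the Bregman prox is neither firmly nonexpansive nor globally single-valued, so its $C^1$ regularity cannot be read off the operator itself and must instead be extracted from the partial-smoothness / finite-identification machinery of Section~\ref{sec:ibpg_localconv}, reducing via the implicit function theorem on $\calM_{\xpa}$ to invertibility of $H_\psi+U$. Once regularity and the block form of $D\mathbf{T}(\xpa,\xpa)$ are secured, the remaining steps — the congruence/similarity argument and the quadratic-plus-IVT estimate — are elementary.
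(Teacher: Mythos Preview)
Your proposal is correct and, for the spectral part, takes a cleaner route than the paper.

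For the $C^1$ regularity, both you and the paper follow the same idea: use nondegeneracy to force the Bregman forward--backward step into the active manifold $\calM_{\xpa}$, then replace $G$ by a smooth representative on $\calM_{\xpa}$ and extract differentiability of the constrained minimizer. You phrase this as the implicit function theorem on $\calM_{\xpa}$ with invertibility of $H_\psi+U$; the paper invokes Shapiro's parametric sensitivity result for the Lagrangian $\calL(x,u,\lambda)=\Upsilon(x,u)+\pscal{W(u),\lambda}$, with the quadratic growth coming from strong convexity of $\psi$. These are essentially the same argument in different dress.

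For the eigenvalue claim, the approaches diverge. The paper recomputes the derivative through the Lagrangian Hessians $\widetilde{\calH}_{uu}$ and $\widetilde{\calH}_{zu}$ and argues directly that $\eta\widetilde{\calH}_{uu}+\widetilde{\calH}_{zu}$ is singular for some $\eta>1$ (since $\widetilde{\calH}_{uu}+\widetilde{\calH}_{zu}$ has a negative eigenvalue at an active strict saddle, while $\widetilde{\calH}_{uu}$ is positive definite). You instead recognize that on $T_{\xpa}\times T_{\xpa}$ the Jacobian $D\mathbf{T}(\xpa,\xpa)$ coincides with the matrix $M$ of \eqref{eq:matrix-linearization}, translate the saddle condition via the identity $W^{-1}-V=\gamma\nabla^2_{\calM_{\xpa}}\Phi(\xpa)$ and the congruence $W^{1/2}(W^{-1}-V)W^{1/2}=\Id-W^{1/2}VW^{1/2}$ into the existence of a real eigenvalue $\eta>1$ of $WV$, and then push this through the quadratic \eqref{eq:vrho-eta-1} by the intermediate value theorem ($p(1)=1-\eta<0$, $p(+\infty)=+\infty$). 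Your version is more economical: it reuses the machinery of Section~\ref{sec:ibpg_localconv} verbatim rather than redoing a Lagrangian computation, and the final IVT step is more transparent than the paper's singularity argument. Two small points to tighten: Proposition~\ref{pro:poly-spect-anal}\ref{pro:poly-spect-anal-1} is stated in the direction $M\Rightarrow WV$, so add the one-line verification that if $WVr_2=\eta r_2$ and $\varrho$ solves \eqref{eq:vrho-eta-1} then $M(\varrho r_2,r_2)=\varrho(\varrho r_2,r_2)$; and note explicitly that any eigenvector of $D\mathbf{T}(\xpa,\xpa)$ with nonzero eigenvalue has both components in $T_{\xpa}$ (since $\mathcal{P}$ maps into $\calM_{\xpa}$), so restricting to $M$ loses nothing. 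Finally, Proposition~\ref{pro:local-linearization} as stated assumes $\xpa\in\gri{\Phi}$, which an active strict saddle need not satisfy, but its proof only uses $\xpa\in\nd{\Phi}$ (via Remark~\ref{rmk:positive-U}) for the linearization formula itself, so your invocation is legitimate.
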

We defer the proof to Section~\ref{pr:lem:unstablebpg}. 
%To the best of our knowledge, this is the first attempt of an escape property for inertial methods in the Bregman setting. 

\begin{remark}\label{rmk:escape-bregm}{\ }
	
This Lemma extends \cite[Theorem~4.1]{davis_proximal_2022} to the inertial Bregman proximal gradient setting. The key insight is that, in the vicinity of the critical point denoted by $\xpa$, any optimization problem can be locally approximated on the active manifold. This allows us to circumvent the nonsmoothness issues present in the general case.

However, extending \cite[Theorem~4.1]{davis_proximal_2022} to the Bregman setting presents a significant challenge. In the Euclidean framework, the arguments rely heavily on \cite[Corollary~2.12]{davis_proximal_2022} of the center manifold theorem. This result hinges on the hypothesis that the mapping $A$ is a global lipeomorphism. Unfortunately, this is not true in the Bregman setting, as $A$ is only a local lipeomorphism but not a global one. This suggests that a different proof strategy is needed which calls for future work as we will discuss in our perspectives. 
\end{remark}

\section{Numerical Experiments}\label{sec:ibpg_numexp}
In this section, we discuss some numerical experiments to illustrate our theoretical results.
\subsection{Phase retrieval}
We apply our results to regularized phase retrieval. Recall that the goal is to recover a vector $\avx\in\bbR^n$ from quadratic measurements 
\[y=|A\avx|^2 \in \bbR^m,\]
where $A:\bbR^n\rightarrow\bbR^m$ is a linear operator. We can reformulate this problem as an optimization problem in the form
\begin{equation}\label{eq:phase_retrieval}
	\min\limits_{x\in\bbR^n}\Phi(x)=\frac{1}{4m}\normm{y-|Ax|^2}^2+\lambda R(x), \quad \lambda>0.
\end{equation}
where $R \in \Gamma_0(\bbR^n)$. $R$ is a regularizer that that promotes objects sharing a structure similar to that of $\avx$. Problem~\eqref{eq:phase_retrieval} is an instance of \eqref{eq: generalpro} with $F(x)=\frac{1}{4m}\normm{y-|Ax|^2}^2$ and $G(x)=\lambda R(x)$.

Let us observe that $F$ is a semi-algebraic data fidelity term and that $F\in C^2(\bbR^n)$ but is nonconvex (though weakly convex). Besides, $\nabla F$ is not Lipschitz continuous. Therefore, we associate to $F$ the  kernel function 
\begin{equation}\label{eq:entropy-n}
	\psi(x)=\frac{1}{4}\normm{x}^4+\frac{1}{2}\normm{x}^2. 
\end{equation}
$\psi\in C^2(\bbR^n)$ is full domain and $1-$strongly convex function with a gradient that is Lipschitz over bounded subsets of $\bbR^n$. We have that $F$ is smooth relative to $\psi$ (see \cite[Lemma~2.8]{godeme2023provable}). Thus all our Premises~\ref{assump_A}-\ref{assump_B} are fulfilled.

\subsection{Experiments setup}
 This experimental set-up has been inspired by the work \cite{liang_multi-step_2016,liang_activity_2017}. Throughout our experiments, $A$ is drawn from the standard Gaussian ensemble, \ie, the entries of $A$ are \iid mean-zero and standard Gaussian. We solve this problem using Algorithm~\ref{alg:BPGBT} with $\ak\equiv1$. For each numerical experiment, we run the algorithm with a constant step-size $\gamma=\frac{0.99}{3+10^{-4}}$. For $R$, we have tested several regularizers as described hereafter. 
\begin{example}\textbf{(\onenorm).} For any  $x\in\bbR^n$, the \onenorm is given by 
	$R(x)=\norm{x}{1}\eqdef\sum_{i=1}^n|x_i|,$ which is partly smooth at any $x$ relative to the linear subspace
	\[
	\calM=T_x\eqdef\left\{u\in\bbR^n:\supp(u)\subset\supp(x)\right\},\quad\supp(x)\eqdef\{i:x_i\neq0\}.
	\] 
	
	The underlying vector $\avx$ is taken to be sparse with $s=12$ non-zeros entries for a vector of size $n=128$. The number of quadratic measurements is taken as $m=0.5\times s^{1.5}\times\log(n)$. As there is no noise, we took $\lambda=10^{-5}$. Figure~\ref{fig: solving_l1} shows the recovery results. The left plot of Figure~\ref{fig: solving_l1} displays the relative error of the iterates vs the number of iterations. On the right plot, we display the support of the iterates. Clearly, the left plot shows that Algorithm~\ref{alg:BPGBT} identifies the correct support after 300 iterations and converges to the true vector. The left plot confirms what is anticipated by our analysis, that the relative error converges locally linearly (see the dashed line). The local linear convergence rate is in very good  agreement with the one we predicted.
	\begin{figure}[htbp]
		\centering
		\includegraphics[trim={0cm 8cm 0 8cm},clip,width=0.7\linewidth]{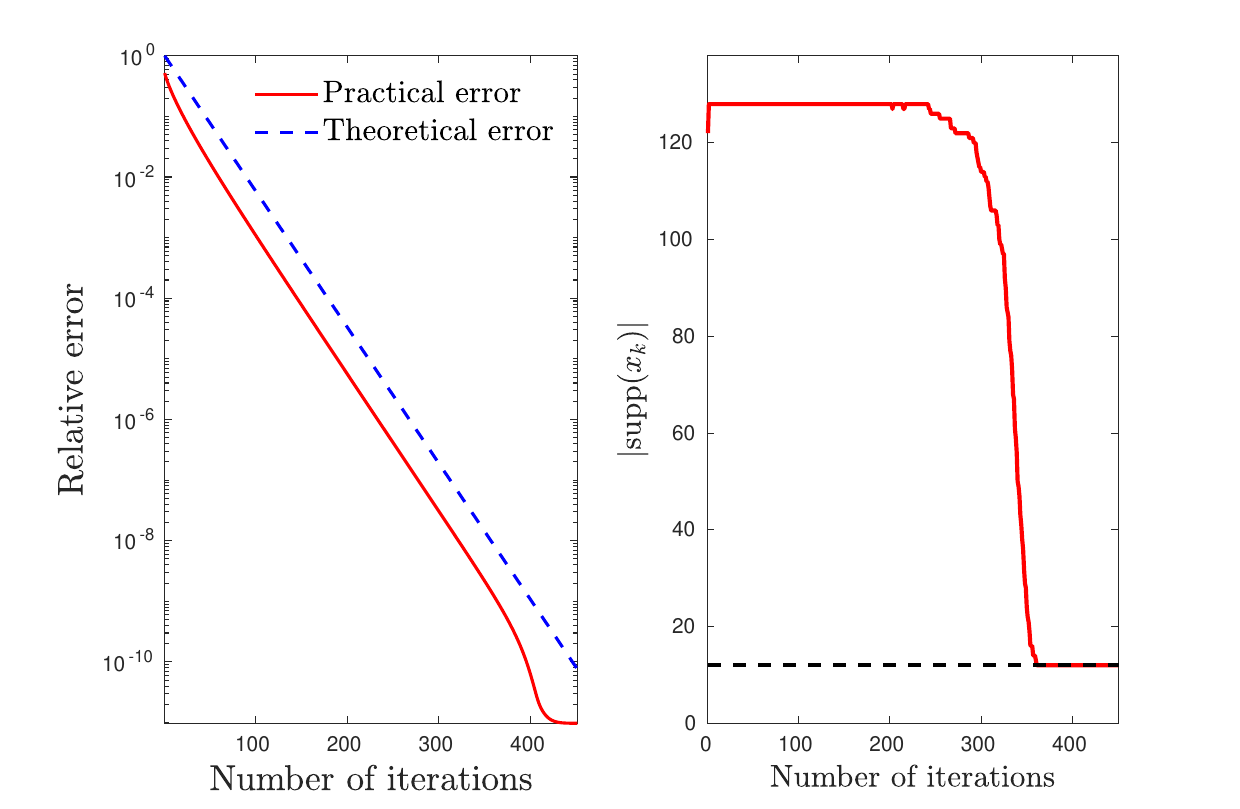}
		\caption{Phase retrieval by solving \eqref{eq:phase_retrieval} with the $\ell_1-$norm regularizer.}
		\label{fig: solving_l1}
	\end{figure}
\end{example}

\begin{example}\textbf{(\onetwonorm).} Here, we take $R$ as the group/block Lasso which si designed to promote group sparsity. Let $\{1,\cdots,n\}$ be partitioned into nonoverlapping blocks $\calB$ such that: $\bigcup\limits_{b\in\calB}=\{1,\cdots,n\}$. The \onetwonorm  of $x$ is given by $R(x)=\norm{x}{1,2}\eqdef\sum_{b\in\calB}\normm{x_b}$ with $x_b=(x_i)_{i\in b}\in\bbR^{|b|}$. This function is partly smooth at $x$ with respect to the linear subspace
	\[
	\calM=T_x\eqdef\bBa{u\in\bbR^n:\supp_{\calB}(u)\subset\calS_{\calB},\quad \calS_{\calB}\eqdef\bigcup\{b:x_{b}\neq0\}}.
	\]
	In our experiment, we consider the true vector is of size $n=128$ with $2$ nonzero blocks of size $8$ each. The number of measurements is $m=0.5\times(2 \times 8)^2\times\log(128)$ quadratic measurements. We also take $\lambda=10^{-5}$. The results are shown in Figure~\ref{fig: solving_l12}, and they are consistent with the discussion for the \onenorm.
	\begin{figure}[htbp]
		\centering
		\includegraphics[trim={0cm 8cm 11cm 8.75cm},clip,width=0.35\linewidth]{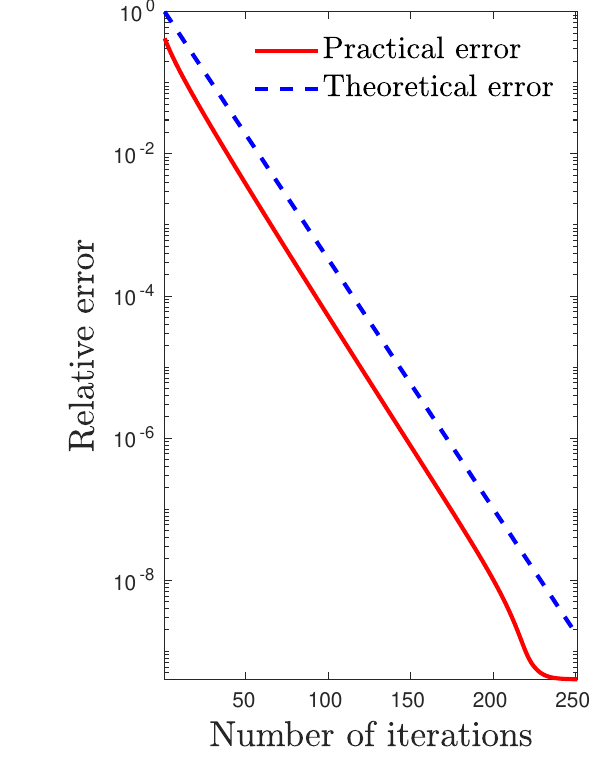}
		\caption{Phase retrieval by solving \eqref{eq:phase_retrieval} with the $\ell_{1,2}-$norm regularizer.}
		\label{fig: solving_l12}
	\end{figure}
\end{example}

\begin{example}\textbf{(Analysis-type prior).} Let $R_0\in\PSF{D x}{\calM_0}$ where $D:\bbR^n\rightarrow\bbR^p$ is a linear operator. When $D$ satisfies an appropriate transversality condition, then $R\eqdef R_{0}\circ D$ is partly smooth with respect to $\calM=\{u\in\bbR^n: D u\in\calM_0\}$. 
	
	The anisotropic total variation is a particular case where $R_0$ is the \onenorm and $D$ is a finite-difference operator with appropriate boundary conditions. It is polyhedral and partly smooth at $x$ relative to the linear subspace
	\[
	\calM=T_{x}\eqdef\{u\in\bbR^n:\supp(D u)\subset\supp(D x)\}.
	\] 
	%\begin{itemize}
	%    \item The ``analysis prior'' formulation uses an objective function of the form:
	%    \begin{equation}\label{eq:tv_prior}
		%    \Phi(x)=\frac{1}{4m}\normm{y-|Ax|^2}^2+\lambda \norm{D x}{1}, \quad \lambda>0.
		%\end{equation}
		%    For the numerical experiments, we consider the anisotropic Total Variation or TV semi-norm. In this case  $R_0=\norm{\cdot}{1}$ and $D=\TVop$ is the finite difference approximation of the derivative. 
		In our experiment here, the original vector $\avx$ is piecewise constant with $s=12$ randomly placed jumps. The number of measurements is $m=0.5\times s^2\times\log(n)$. The regularizer is the total variation. Since the proximity operator of the latter is not explicit, we used the maxflow algorithm of \cite{ChambolleTV11} to compute it. The results are depicted in Figure~\ref{fig: solving_tv}. The left plot shows the original (dashed line) and the recovered vector (solid line). The right plot shows the evolution of the relative error vs iterations where again, a linear convergence behaviour is observed with a predicted rate that is very close to the observed one. 
		\begin{figure}[htbp]
			\centering
			%[trim={left bottom right top},clip]
			\includegraphics[trim={0cm 7cm 0cm 7cm},clip,width=.75\linewidth]{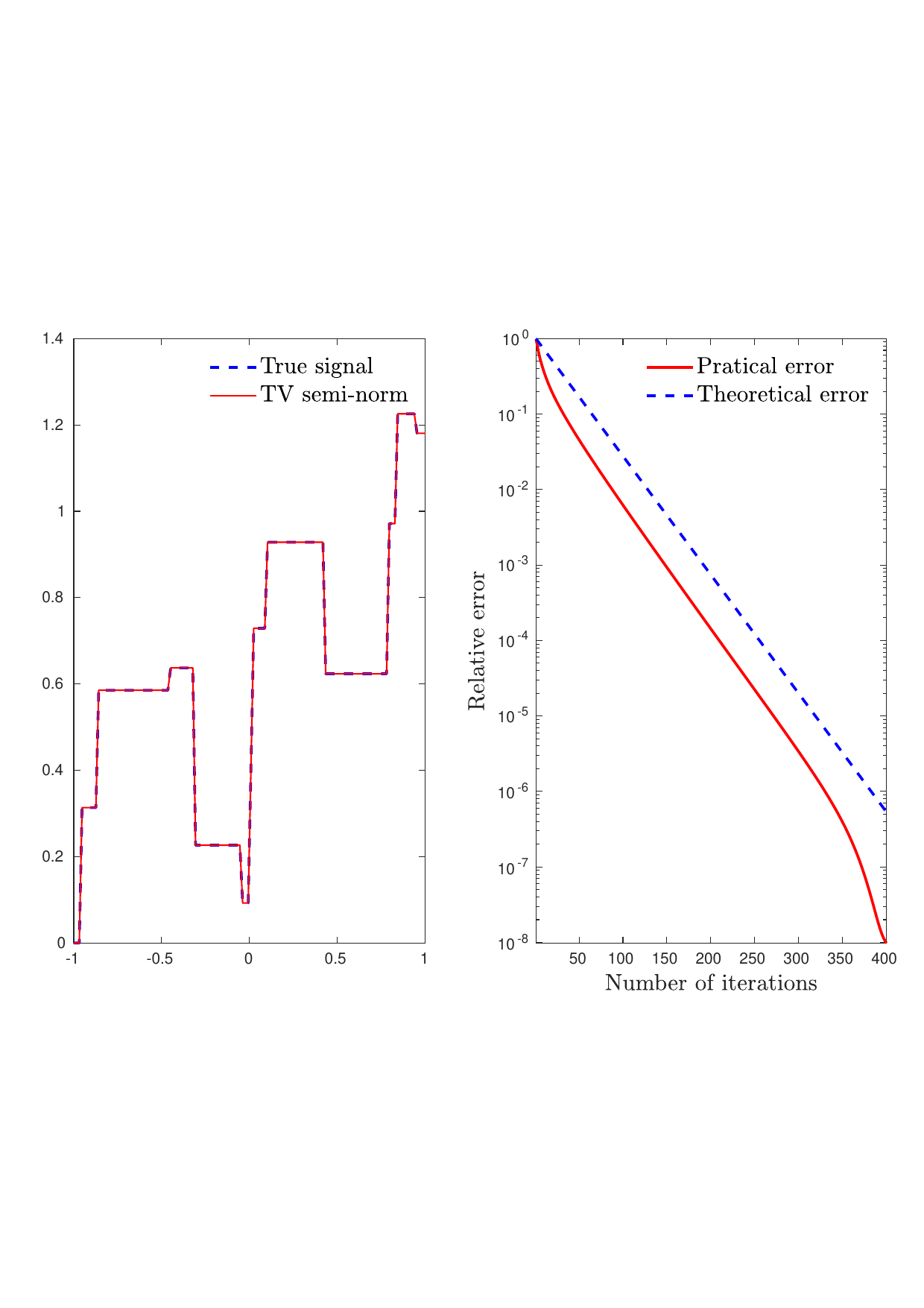}
			\caption{Phase retrieval by solving \eqref{eq:phase_retrieval} with the TV semi-norm.}
			\label{fig: solving_tv}
		\end{figure}
	\end{example}

	\begin{example}\textbf{(Wavelet synthesis-type prior).} 
		We here cast the phase retrieval problem as
		\begin{equation}\label{eq:tv_synthesis}
			\min_{v \in \bbR^p} \Phi(v) \eqdef \frac{1}{4m}\normm{y-|AW v|^2}^2+\lambda \norm{v}{1}, \quad \lambda>0 ,
		\end{equation}
		where $W$ is a wavelet synthesis operator. The reconstructed vector is given by $x=W v$. When $W$ is orthonormal, this is equivalent to the analysis-type formulation with $D=W^\top$. This is not anymore the case when $W$ is redundant. 
		
		In this experiment, we will use the shift-invariant wavelet dictionary with the Haar wavelet, which is closely related to the total variation regularizer for 1D signals; see \cite{steidl2004equivalence,waldspurger_phase_2015}. We take the same number of jumps and measurements as in the previous example. The results are shown in Figure~\ref{fig: solving_tv_syn}.
		%\[
		%\chi_i^{(j)}=\frac{1}{2^{(j+1)}}\left\{\begin{aligned} &+1, &\si\quad 0\leq i<2^{j},\\
			%&-1, &\si  -2^{j}\leq i<0,\\
			% &0 & \odwz, \end{aligned}\right.
		%\]
		%Therefore the  corresponding dictionary is given by
		%$\adj{W}x=(\chi^{(j)}\ast x)_{0\leq j\leq J_{\max}}$, the indices $j$ correspond to the resolution and  is bounded by $J_{\max}<\log_2(n)$.
		
		\begin{figure}[htbp]
			\centering
			%[trim={left bottom right top},clip]
			\includegraphics[trim={0cm 7cm 0cm 7cm},clip,width=.75\linewidth]{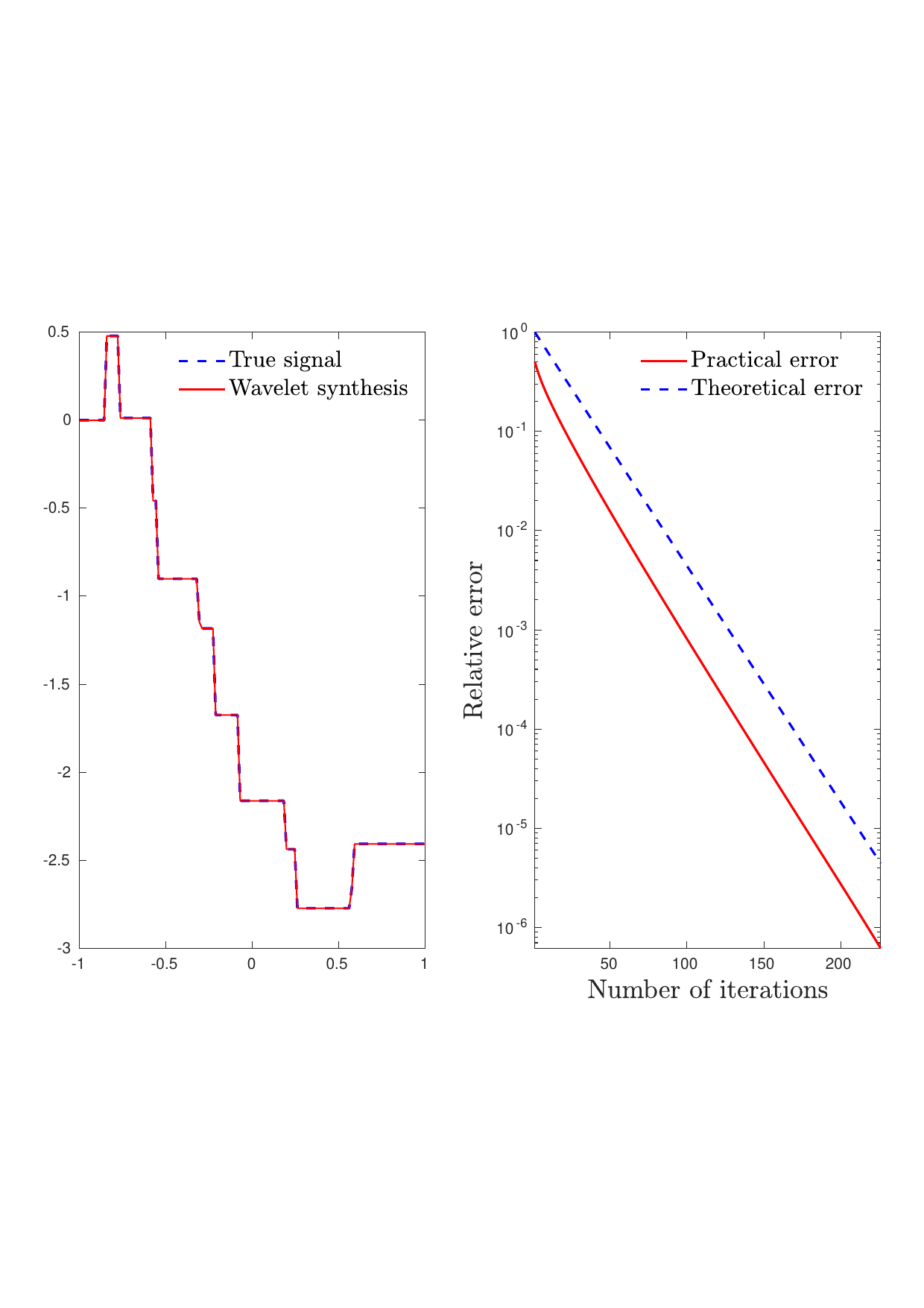}
			\caption{Phase retrieval with the synthesis prior formulation.}
			\label{fig: solving_tv_syn}
		\end{figure}
	\end{example}

\begin{appendices}\label{sec:appendix}

\section{Bregman Toolbox}\label{sec:bregman-tools}
\paragraph{Definition and properties}
Let us start with the definition of a Legendre function. 
\begin{definition}\label{def:legendre}\cite[Chapter 26]{rockafellar_convex_1970}\textbf{(Legendre function)} 
	Let $\phi\in\Gamma_0(\bbR^n)$ such that $\inte(\dom(\phi))\neq\emptyset$.  $\phi$ is called
	\begin{enumerate} [label=(\roman*)]
		\item \textit{essentially smooth} if it is differentiable on $\inte(\dom(\phi))$ with $\normm{\nabla\phi(\xk)}\to\infty$ for every sequence $\seq{\xk}$ of $\inte(\dom(\phi))$ converging  to a boundary point of $\dom(\phi)$.
		\item  \textit{essentially strictly convex}  if it is strictly convex on every convex subset of \[\dom~\partial\phi\eqdef\enscond{x}{\partial\phi(x)\neq\emptyset}.\]
	\end{enumerate} 
	A Legendre function is essentially smooth and strictly convex. 
\end{definition}
\begin{remark}\label{rmk: Legend_func}\cite[Theorem 26.5]{rockafellar_convex_1970}{\ }
	\begin{itemize}
		\item Let us notice that a function is Legendre if and only if its conjugate $\phi^*$ is of Legendre. 
		\item We also have that $\dom\partial\phi=\inte(\dom(\phi))$, $\partial\phi=\emptyset, \forall x\in\bd(\dom(\phi)) $ and $\forall x\in\inte(\dom(\phi))$ we have $\partial\phi(x)=\{\nabla\phi(x)\}$ and $\nabla\phi$ is a bijection from $\inte(\dom(\phi))$ to $\inte(\dom(\phi))^*$ with $\nabla\phi^*=(\nabla\phi)^{-1}$.  
	\end{itemize}
\end{remark}

For any function  $\phi:\bbR^n \to \overline{\bbR}$, we define a proximity measure associated with $\phi$.
\begin{definition}\textbf{(Bregman divergence)} The General  Bregman divergence associated with $\phi$  is
	\begin{equation}
		D_{\phi}^v(x,y)\eqdef \left\{\begin{aligned} &\phi(x)-\phi(y)-\pscal{v;x-y}, &\si (x,y)\in \left(\dom(\phi)\times \inte(\dom(\phi))\right),v\in\partial\phi(y)  ,\\
			&+\infty & \odwz. \end{aligned}\right.
	\end{equation}
\end{definition}
\begin{remark}{\ }
	When  $\phi$  is Legendre or simply sufficiently smooth on $\inte(\dom(\phi))$, we recover the classical definition \ie,
	\begin{equation}
		D_{\phi}(x,y)=\phi(x)-\phi(y)-\pscal{\nabla  \phi(y),x-y}.
	\end{equation}
	If $\phi(x)=\frac{1}{2}\normm{x}^2$, the Bregman divergence is the usual euclidean distance $D_{\phi}(x,y)=\frac{1}{2}\normm{x-y}^2.$ 
	This proximity measure is not a distance (it's not symmetric in general for instance). 
\end{remark}
Throughout the rest of the work, we use the following properties of the Bregman divergence.
\begin{proposition}\label{pro:bregman} \textbf{(Properties of the Bregman distance)}
	\begin{enumerate} [label=(\roman*)]
		\item \label{pro:bregman1} $D_\phi$ is nonnegative if and only if $\phi$ is convex. If in addition, $\phi$ is strictly convex, $D_\phi$ vanishes if and only if its arguments are equal. \label{pp:bregman1}
		
		\item \label{pro:bregman2}
		Linear additivity: for any $\alpha,\beta \in \bbR$ and any functions $\phi_1$ and $\phi_2$ sufficiently smooth,  we have
		\begin{equation}\label{linear}
			D_{\alpha \phi_1+\beta \phi_2}(x,u)=\alpha D_{\phi_1}(x,u) + \beta D_{\phi_2}(x,u),
		\end{equation}
		for all $(x,u)\in \left(\dom \phi_1\cap\dom \phi_2\right)^2$ such that both $\phi_1$ and $\phi_2$ are differentiable at $u$.
		\item \label{pro:bregman3} The three-point identity:  For any $x \in \dom(\phi)$ and $u,z\in \inte(\dom(\phi))$, we have 
		\begin{equation}\label{3poin}
			D_{\phi}(x,z)-D_{\phi}(x,u)-D_{\phi}(u,z)=\pscal{\nabla\phi(u)-\nabla\phi(z);x-u}.
		\end{equation}
		
		\item \label{pro:bregman4}
		Suppose that $\phi$ is also $C^2(\inte(\dom(\phi)))$ and $\nabla^2 \phi(x)$ is positive definite for any $x \in \inte(\dom(\phi))$. Then for every convex compact subset $\Omega\subset\inte(\dom(\phi))$, there exists $0<\theta_{\Omega}\leq\Theta_{\Omega}<+\infty$ such that for all $x,u \in \Omega$,
		\begin{equation}\label{eq:striconv}
			\frac{\theta_{\Omega}}{2}\normm{x-u}^2\leq D_{\phi}(x,u)\leq\frac{\Theta_{\Omega}}{2}\normm{x-u}^2.
		\end{equation}
	\end{enumerate} 
\end{proposition}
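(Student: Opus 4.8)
The plan is to establish the four items separately. Items (ii) and (iii) are immediate algebraic identities, item (i) is the subgradient‑inequality characterization of convexity, and item (iv) follows from the integral form of Taylor's theorem. For (ii), observe that the defining formula $D_\phi(x,u)=\phi(x)-\phi(u)-\pscal{\nabla\phi(u),x-u}$ is linear in $\phi$, and that $\nabla(\alpha\phi_1+\beta\phi_2)(u)=\alpha\nabla\phi_1(u)+\beta\nabla\phi_2(u)$ wherever both are differentiable; substituting and collecting terms yields \eqref{linear} on $(\dom\phi_1\cap\dom\phi_2)^2$ at points of differentiability. For (iii), I would simply expand $D_\phi(x,z)$, $D_\phi(x,u)$, $D_\phi(u,z)$ from the definition: the terms $\phi(x),\phi(u),\phi(z)$ cancel pairwise, and regrouping the linear terms via $(x-z)=(x-u)+(u-z)$ leaves exactly $\pscal{\nabla\phi(u)-\nabla\phi(z),x-u}$, which is \eqref{3poin}. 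Both need only differentiability of $\phi$ at the relevant base points, which is already built into the definition of $D_\phi$.

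For item (i), in the forward direction: if $\phi$ is convex and $v\in\partial\phi(y)$, the subgradient inequality $\phi(x)\ge\phi(y)+\pscal{v,x-y}$ is precisely $D_\phi^v(x,y)\ge 0$; where $D_\phi^v=+\infty$ there is nothing to prove. For the converse I would work in the classical (differentiable) setting used in the surrounding remark, where $\phi$ is differentiable on $\inte\dom\phi$: then $D_\phi(x,u)\ge 0$ for all $x,u\in\inte\dom\phi$ reads $\phi(x)\ge\phi(u)+\pscal{\nabla\phi(u),x-u}$, the first‑order characterization of convexity of $\phi$ on the open convex set $\inte\dom\phi$ (see, \eg, \cite{rockafellar_convex_1970}), and lower semicontinuity of $\phi$ extends this to $\dom\phi$. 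The strict statement is obtained the same way: strict convexity turns the gradient inequality strict for $x\ne y$, so $D_\phi(x,y)>0$ whenever $x\ne y$ while $D_\phi(y,y)=0$ trivially; conversely, if $D_\phi(x,y)=0$ forces $x=y$, the gradient inequality is strict off the diagonal, \ie $\phi$ is strictly convex.

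For item (iv), fix a convex compact $\Omega\subset\inte\dom\phi$. For $x,u\in\Omega$ the segment $[u,x]$ stays in $\Omega$, hence in $\inte\dom\phi$, so Taylor's theorem with integral remainder applies and gives
\[
D_\phi(x,u)=\int_0^1 (1-t)\,\pscal{x-u,\nabla^2\phi(u+t(x-u))(x-u)}\,dt .
\]
Since $\phi\in C^2(\inte\dom\phi)$, the map $w\mapsto\nabla^2\phi(w)$ is continuous, so on the compact set $\Omega$ the smallest and largest eigenvalues of $\nabla^2\phi(w)$ attain their extrema; positive‑definiteness at every point forces the minimum of the smallest eigenvalue to be some $\theta_\Omega>0$, and we take $\Theta_\Omega$ to be the maximum of the largest eigenvalue. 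Then $\theta_\Omega\normm{h}^2\le\pscal{h,\nabla^2\phi(w)h}\le\Theta_\Omega\normm{h}^2$ for all $w\in\Omega$ and $h\in\bbR^n$, and inserting this into the integral together with $\int_0^1(1-t)\,dt=\frac{1}{2}$ yields \eqref{eq:striconv}.

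None of the four items is genuinely hard; the only points that need a little care are the converse implication in (i) — one should pin down the function class (the classical differentiable setting matching the adjacent remark) so that ``$D_\phi\ge 0$'' translates cleanly into the gradient inequality and hence convexity — and, in (iv), upgrading pointwise positive‑definiteness of $\nabla^2\phi$ to a uniform lower bound $\theta_\Omega>0$, which is exactly where compactness of $\Omega$ and continuity of the Hessian are used.
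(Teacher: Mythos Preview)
Your proof is correct on all four items. The paper does not actually supply a proof for this proposition: it is stated in the appendix as part of the ``Bregman Toolbox'' of standard facts and left unproved. So there is nothing to compare against directly.

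That said, your argument for item~(iv) via the integral Taylor remainder
\[
D_\phi(x,u)=\int_0^1 (1-t)\,\pscal{x-u,\nabla^2\phi(u+t(x-u))(x-u)}\,dt
\]
is exactly the device the paper employs in the adjacent Lemma~\ref{lem:bregcomp}, so your approach is fully in line with the paper's style. Your handling of (i) is also appropriate: the converse direction is indeed only meaningful in the differentiable (classical) setting, which is precisely how the paper uses $D_\phi$ throughout, so restricting to that setting is the right call.
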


\paragraph{Regularity of functions}\label{sec:regul-func}
The following definition extends the classical gradient Lipschitz continuity property to the Bregman setting, this notion is named "relative smoothness" and  is important to the analysis of optimization problems that are differentiable but lack of gradient Lipschitz-smoothness. The earliest reference to this notion can be found in an economics paper \cite{birnbaum2011distributed} where it is used to address a problem in game theory involving fisher markets. Later on it was developed in \cite{bauschke_descent_2016,bolte_first_2017} and then in \cite{lu2018relatively}, although first coined relative smoothness in \cite{lu2018relatively}. Let $\phi \in \Gamma_0(\bbR^n) \cap C^1(\inte(\dom(\phi)))$, and $g$ be a proper and lower semicontinuous function such that $\dom(\phi) \subset \dom(g)$.
\begin{definition}\label{smoothadaptable}\textbf{($L-$relative smoothness)} Let $g \in C^1(\inte(\dom(\phi)))$,
	$g$ is called $L-$smooth relative to $\phi$ on $\inte(\dom(\phi))$ if there exists $L>0$ such that $L\phi-g$ is convex on $\inte(\dom(\phi))$, \ie 
	\begin{equation}\label{eq:smoothadaptable}
		D_g(x,u) \leq LD_{\phi}(x,u) \qforallq (x,u) \in \dom(\phi) \times \inte(\dom(\phi)) .
	\end{equation}
\end{definition}

When $\phi$ is the energy entropy, \ie $\phi=\frac{1}{2}\normm{\cdot}^2$, one recovers the standard descent lemma implied by Lipschitz continuity of the gradient of $g$.

In a similar way, we also extend the standard local strong convexity property to a relative version \wrt to an entropy or kernel $\phi$.

\begin{definition}\label{relativeconvex}\textbf{(Local relative strong convexity)}
	Let $\C$ be a non-empty subset of $\dom(\phi)$. Let $g \in C^1(\inte(\dom(\phi)))$, for $\sigma > 0$ we say that $g$ is $\sigma$-strongly convex on $\C$ relative to $\phi$ if 
	\begin{equation}
		D_g(x,u)\geq \sigma D_{\phi}(x,u) \qforallq x \in \C \tandt u \in \C \cap \inte(\dom(\phi)) .
	\end{equation}
\end{definition}
When $\C=\dom(\phi)$, we get the idea of global relative strong convexity. If $\phi$ is the energy entropy (\ie $\phi=\frac{1}{2}\normm{\cdot}^2$), one recovers the standard definition of (local/global) strong convexity.

The idea of global (\ie $\C=\dom(\phi)$) relative strong convexity has already been used in the literature, see \eg  \cite[Proposition~4.1]{Teboulle18} and \cite[Definition~3.3]{Bauschke19}. Its local version was first proposed in \cite{Silveti22}. When $\phi$ is the energy entropy (\ie $\phi=\frac{1}{2}\normm{\cdot}^2$), one recovers the standard definition of (local/global) strong convexity. Relation of global relative strong convexity to gradient  dominated inequalities, which is an essential ingredient to prove global linear convergence of mirror descent, was studied in \cite[Lemma~3.3]{Bauschke19}.

Let us give the following useful lemma which compare the Bregman divergences of smooth functions.
\begin{lemma}\label{lem:bregcomp}
	Let $g,\phi\in C^2(\bbR^n)$. If $\forall u\in\bbR^n$, $\nabla^2g(u) \lon \nabla^2\phi(u)$ for all $u$ in the segment $[x,z]$, then,
	\begin{equation}
		D_g(x,z)\leq D_{\phi}(x,z) .
	\end{equation}
\end{lemma}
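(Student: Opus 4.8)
The statement compares two Bregman divergences $D_g(x,z)$ and $D_\phi(x,z)$ under the pointwise Loewner ordering $\nabla^2 g(u) \lon \nabla^2\phi(u)$ along the segment $[x,z]$. The natural route is to express each divergence via an exact Taylor expansion with integral remainder. For a $C^2$ function $h$, one has the identity
\[
D_h(x,z) = \int_0^1 (1-t)\,\pscal{x-z,\nabla^2 h(z+t(x-z))(x-z)}\,dt,
\]
which follows from integrating the second-order Taylor formula along the segment $[z,x]$. I would first establish (or simply recall) this representation for both $h=g$ and $h=\phi$.

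**Main argument.** With both divergences written as integrals over $t\in[0,1]$ of the same positive weight $(1-t)$ against the quadratic forms $\pscal{x-z,\nabla^2 h(\xi_t)(x-z)}$ with $\xi_t \eqdef z+t(x-z)\in[x,z]$, the hypothesis $\nabla^2 g(\xi_t) \lon \nabla^2\phi(\xi_t)$ gives, for every fixed $t$,
\[
\pscal{x-z,\nabla^2 g(\xi_t)(x-z)} \leq \pscal{x-z,\nabla^2\phi(\xi_t)(x-z)}.
\]
Multiplying by the nonnegative factor $(1-t)$ and integrating over $[0,1]$ preserves the inequality, yielding $D_g(x,z)\leq D_\phi(x,z)$. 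This is essentially a monotonicity-of-the-integral argument once the integral representation is in place.

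**Where the work is.** There is no deep obstacle here; the one point to be slightly careful about is justifying the integral form of the second-order remainder, i.e. that for $h\in C^2(\bbR^n)$ the map $t\mapsto h(z+t(x-z))$ is $C^2$ on $[0,1]$ with the stated derivatives, so that the fundamental theorem of calculus applies twice and produces the $(1-t)$-weighted integral. This is standard but should be stated. One should also note that the hypothesis is only needed on the segment $[x,z]$, which is exactly the set $\{\xi_t : t\in[0,1]\}$ traversed by the integral, so no global assumption on the Hessians is used. I would present the proof in three short steps: (i) write $D_g$ and $D_\phi$ in integral remainder form; (ii) invoke the Loewner inequality pointwise along $[x,z]$; (iii) integrate against the weight $(1-t)\geq 0$ and conclude.
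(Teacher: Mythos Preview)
Your proposal is correct and follows essentially the same argument as the paper: write the Bregman divergence via the second-order Taylor integral remainder $D_h(x,z)=\int_0^1(1-t)\pscal{x-z,\nabla^2 h(z+t(x-z))(x-z)}\,dt$, then use the pointwise Loewner inequality on $[x,z]$ together with the nonnegative weight $(1-t)$ to conclude. The paper presents this by forming the difference $D_\phi-D_g$ as a single integral and invoking positive semidefiniteness, which is just a cosmetic variant of your three-step outline.
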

\begin{proof}
	The result comes from the Taylor-MacLaurin expansion. Indeed we have $\forall x,z\in \bbR^n$ 
	\begin{align*}
		D_g(x,z)
		&=g(x)-g(z)-\pscal{\nabla g(z),x-z} \\
		&=\int_0^1(1-\tau)\pscal{x-z,\nabla^2g(z+\tau(x-z))(x-z)}d\tau,
	\end{align*}
	and thus
	\begin{multline*}
		D_{\phi}(x,z)-D_{g}(x,z) = \\ \int_0^1(1-\tau)\pscal{x-z,\Ppa{\nabla^2\phi(z+\tau(x-z))-\nabla^2g(z+\tau(x-z))}(x-z)}d\tau .
	\end{multline*}
	The positive semidefiniteness assumption implies the claim.
\end{proof}

\paragraph{Triangle scaling property}
Here, we introduce the triangle scaling property (TSP) \cite{hanzely_accelerated_2021} for Bregman distances. 
\begin{definition}\label{df:triangle-scaling} Let $\phi$ be a Legendre function. The Bregman distance generated by $\phi$ has the triangle scaling property if there is a constant $\kappa>0$ such that for all $x, y,z\in \ri{\dom(\phi)}$,
	\begin{equation}\label{eq:triangle-scaling}
		D_{\phi}((1 - a)x + a y, (1 - a)x +  az) \leq a^{\kappa} D_{\phi}(y,z),\quad \forall a\in [0,1]. 
	\end{equation}
	We call $\kappa$ the uniform triangle scaling exponent (TSE) of $D_{\phi}$.
\end{definition}
% \begin{remark}
	%    This notion can be intuitively seen as a relaxation of the basic Thales Theorem restricted to the inner of the segment in Bregman Geometry if we assume that we have the same geodesics as in Euclidean geometry. Indeed, the Thales Theorem if it holds in Bregman geometry would be written
	%    \[\frac{D_{\phi}((1 - a)x + a y, (1 - a)x +  az)}{D_{\phi}(y,z)}=a,\quad \forall a\in [0,1].\]
	%     This broad relaxation signifies our interest in geometries that closely align with those where the Thales property remains valid. The aim is to consider geometries that exhibit similarities without deviating significantly from those in which the Thales property holds.
	% \end{remark}
There is a large class of functions that satisfy this property, here are some specific examples. 
\begin{itemize} 
	\item \textit{Euclidean distance.} When $\phi$ is the energy and thus $D_{\phi}(x,y)=\frac{1}{2}\normm{x-y}^2.$ The squared Euclidean distance has a uniform TSE $\kappa=2$.   
	\item \textit{Bregman divergence induced by strongly convex and smooth functions.} If $\phi$ is $\sigma_{\phi}-$strongly convex and $L-$smooth over its domain then \eqref{eq:triangle-scaling} hold with $\kappa=2$ if the right-hand side is multiplied by the condition number $L/\sigma_{\phi}$.
	\item \textit{Bregman geometry based on polynomial kernel.} Polynomial functions of the form \(\phi(x) = \frac{1}{p}\|x\|^p\) for some \(p \geq 2\), the global TSE for the induced Bregman divergence can be less than 1 for \(p > 2\). However, the modified reference function \(\phi(x) = \frac{1}{2}\|x\|^2 + \frac{1}{p}\|x\|^p\) for \(p \geq 4\) has  a coefficient \(\kappa > 1\), or \(\kappa = 2\) with an additional factor on the right-hand side of \eqref{eq:triangle-scaling}, over a bounded domain. It turns that this choice of $\phi$ is precisely the one that we will make for phase retrieval.
\end{itemize}
%We have the following proposition. 
%\begin{proposition}\label{pro:2nd-order-tse}\textbf{(Second order characterization of the TSP.)} Let $\phi\in C^{2}(\bbR^n)$  Legendre function. If the kernel or entropy function satisfies the TSP then we have for all $u\in[(1-a)x+ay,(1-a)x+az]$ and for all $v\in[y,z]$ then
%	\begin{equation}\tag{TSP}\label{eq:2nd-order-tse}
%		a^{\kappa}\nabla^2\phi(v)-a^{2}\nabla^2\phi(u)\slon0. 
%	\end{equation}
%\end{proposition}
%
%\begin{proof}
%	The proof follows from Taylor expansion of $C^2-$smooth functions.
%\end{proof}

\section{KL Functions}\label{sec:ch2-kltools}
This section encompasses all the essential components required for the axiomatization of convergence for KL functions and therefore can be skipped by an experienced reader.
We start by defining the non-smooth KL property which is an additional assumption on the class of functions that we consider. This property gives a hint about the geometric bearing of the function near the point where it is satisfied. 
\begin{definition}\textbf{(Non-smooth KL property)}\label{def: KL_property} A proper and  lower semicontinuous function $g:\bbR^n\to\overline{\bbR}$ has the KL property at a point $\xpa\in\dom(g)$ if there exists a neighborhood $U_{\xpa}$, $\eta>0$ and a concave real-valued function $\varphi\in C^1([0,\eta[)$, with $\varphi(0)=0$ and $\varphi'>0$, such that
	\[  
	\varphi'\Ppa{g(x)-g(\xpa)}\dist(0,\partial g(x))\geq 1,\quad \forall x \in U_{\xpa} \cap \enscond{x\in\bbR^n}{g(\xpa) < g(x) < g(\xpa)+\eta}.
	\]
	If $g$ has the KL property at each point of $\dom(g)$, $g$ is called a KL function.
\end{definition}
$\varphi$ is known as the desingularizing function. The KL property is also closely related to error bounds and the broader notion of ``(sub)metric regularity''. We refer the reader to \cite{ioffe2017variational} and \cite{bolte_characterizations_2009} for a detailed study of these notions. In general, it is not obvious to check whether a given function is KL or not. Actually, this is a very deep question that has been studied at the interface of analysis and algebraic geometry. For smooth functions, it has been shown that semi-algebraic and sub-analytic are {\L}ojaciewicz in the seminal works of \cite{loj1,loj2,kurdyka1998gradients}. This has been extended to the nonsmooth case and then widely studied in the scope of optimization in \cite{bolte2006clarke,bolte_lojasiewicz_2007,bolte_characterizations_2009}. For instance, functions definable on o-minimal structures are KL. This cover most functions studied in practice, and for instance those in this manuscript.

The following uniformization of the KL property will be very useful; see \cite[Lemma~6]{bolte_proximal_2014}.
\begin{lemma}[Uniformized KL property]\label{lem:uniform-KL}
	Let $\Omega$ be a compact set, and let $g: \mathbb{R}^n \to \overline{\bbR}$ be a proper and closed function. Assume that $g$ is constant on $\Omega$ and satisfies the KL property at each point of $\Omega$. Then, there exist $\eps>0$, $\eta>0$, if there exists $\eta>0$ and a concave  real-valued function $\varphi\in C^1([0,\eta[)$ with $\varphi(0)=0,\varphi'>0$ such that for all $\xpa$ in $\Omega$, one has
	\[
	\varphi'(g(x) - g(\xpa)) \dist(0, \partial g(x)) \geq 1,
	\]
	and all $x \in \bbR^n$ such that $\dist(x, \Omega) < \eps$ and $g(\xpa) < g(x) < g(\xpa)+\eta$.
\end{lemma}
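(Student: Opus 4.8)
The plan is to derive the uniform statement from the pointwise KL property by a routine compactness argument, the only subtlety being how to manufacture one desingularizing function out of finitely many. Write $\omega$ for the common value of $g$ on $\Omega$. For each $u\in\Omega$, the KL property at $u$ furnishes a neighborhood $U_u$ (which we may shrink to an open ball $B(u,\rho_u)$), a constant $\eta_u>0$, and a concave $\varphi_u\in C^1([0,\eta_u[)$ with $\varphi_u(0)=0$ and $\varphi_u'>0$, such that $\varphi_u'(g(x)-\omega)\dist(0,\partial g(x))\ge 1$ whenever $x\in B(u,\rho_u)$ and $\omega<g(x)<\omega+\eta_u$. The family $\{B(u,\rho_u/2)\}_{u\in\Omega}$ is an open cover of the compact set $\Omega$; I would extract a finite subcover attached to points $u_1,\dots,u_p$ and set $\eps\eqdef\min_{1\le i\le p}\rho_{u_i}/2>0$ and $\eta\eqdef\min_{1\le i\le p}\eta_{u_i}>0$.

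Next I would record the covering consequence: if $\dist(x,\Omega)<\eps$, then there is $v\in\Omega$ with $\|x-v\|<\eps$, and $v$ belongs to some $B(u_i,\rho_{u_i}/2)$, so $\|x-u_i\|<\eps+\rho_{u_i}/2\le\rho_{u_i}$, i.e. $x\in B(u_i,\rho_{u_i})\subset U_{u_i}$. Since $g\equiv\omega$ on $\Omega$, for any $\xpa\in\Omega$ the conditions $\dist(x,\Omega)<\eps$ and $g(\xpa)<g(x)<g(\xpa)+\eta$ are precisely $\dist(x,\Omega)<\eps$ and $\omega<g(x)<\omega+\eta$; because $\eta\le\eta_{u_i}$ and $g(u_i)=\omega$, such an $x$ lies in the region where the KL inequality at $u_i$ is valid, hence $\varphi_{u_i}'(g(x)-\omega)\dist(0,\partial g(x))\ge 1$. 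It then only remains to replace $\varphi_{u_i}$ by a single function not depending on $i$.

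For this I would set $\varphi(s)\eqdef\int_0^s \max_{1\le i\le p}\varphi_{u_i}'(t)\,dt$ for $s\in[0,\eta[$. The integrand is a finite maximum of continuous, strictly positive, non-increasing functions (non-increasing since each $\varphi_{u_i}$ is concave), hence itself continuous, strictly positive and non-increasing; consequently $\varphi$ is finite on $[0,\eta[$, of class $C^1$ with $\varphi'>0$, concave, and $\varphi(0)=0$. Moreover $\varphi'(s)\ge\varphi_{u_i}'(s)$ for every $i$ and every $s\in[0,\eta[$, so using $\dist(0,\partial g(x))\ge 0$ and $g(x)-\omega\in\,]0,\eta[$ we obtain $\varphi'(g(x)-g(\xpa))\dist(0,\partial g(x))\ge\varphi_{u_i}'(g(x)-\omega)\dist(0,\partial g(x))\ge 1$ for all $\xpa\in\Omega$, which is the claim.

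The one genuinely delicate point is the construction of $\varphi$: taking the pointwise maximum of the functions $\varphi_{u_i}$ themselves would generally destroy both concavity and $C^1$ regularity at the points where two of them cross (the derivative can jump \emph{upward} there), so the maximum must be taken at the level of the derivatives and then integrated. The remainder is standard compactness bookkeeping, and I would expect no other obstacle.
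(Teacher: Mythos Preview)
Your argument is correct. The paper itself does not prove this lemma: it merely states it and refers to \cite[Lemma~6]{bolte_proximal_2014}. Your compactness-and-gluing proof is precisely the standard one given there, with one cosmetic difference: the cited reference builds the common desingularizing function by taking the \emph{sum} $\varphi=\sum_{i=1}^p \varphi_{u_i}$ on $[0,\eta[$ (which is trivially concave, $C^1$, vanishes at $0$, and satisfies $\varphi'\ge\varphi_{u_i}'$ for each $i$), whereas you integrate the pointwise maximum of the derivatives. Both constructions are valid; the sum is slightly quicker to write down, while your version yields a tighter $\varphi$ and, as you correctly point out, avoids the trap of naively taking the max of the $\varphi_{u_i}$ themselves.
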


For the convergence of our inertial Bregman proximal gradient (IBPG) algorithm, we will use a general convergence mechanism as first axiomatized in \cite{attouch_convergence_2013} for descent algorithms and generalized in \cite{bolte_proximal_2014} on the so-called PALM algorithm, so that it can be used and applied to any given algorithm such as ours (see also, \eg \cite[Appendix 6]{bolte_first_2017} for a self-contained presentation). The main goal is to prove that the whole sequence $\seq{\xk}$ generated by IBPG, converges to a critical point. For that purpose, considering a  Lyapunov function $\Psi$ associated to IBPG, it has to satisfy the following three key conditions.  
\begin{definition}[Descent-like method]\label{def:condition} 
	A sequence $\seq{\xk}$ is called descent-like for the function $\Psi$ if the following conditions hold:
	\begin{enumerate} [label=(C.\arabic*)]
		\item\label{def:condition1} \textit{Sufficient decrease condition.} There exists a positive scalar $\rho_1$ such that
		\[
		\rho_1\normm{\xk-\xkm}^2\leq \Psi(\xk,\xkm) -\Psi(\xkp,\xk),\quad \forall k\in\bbN. 
		\]
		\item\label{def:condition2}  \textit{Relative error condition.} There exists $K\in\bbN$ and $\rho_2>0$ such that $\forall k\geq K$, there exists $v_{k+1}\in\partial\Psi(\xkp,\xk)$ such that
		\[
		\normm{v_{k+1}}\leq\rho_2\Ppa{\normm{\xkp-\xk}+\normm{\xk-\xkm}} .
		\] 
		\item \label{def:condition3} \textit{Continuity condition.} Let $x^{\ast}$ be a limit point of a subsequence $(\xk)_{k\in\calK\subset\bbN}$ then we have that  $\limsup\limits_{k\in\calK\subset\bbN}\Psi(\xk,\xkm)\leq\Psi (x^{\ast}) \eqdef \Psi (x^{\ast},x^{\ast})$. 
	\end{enumerate}
\end{definition}
Condition \ref{def:condition1} is intended to model a descent property of the Lyapunov function, and hence a dissipation of the energy $\Psi$. \ref{def:condition2}\footnote{The original version of this condition in \cite{attouch_convergence_2013} involves only the first term in the bound. The reasoning however remains the same with this version of the inequality; see \eg \cite{Bot14,liang_multi-step_2016,mukkamala_convex-concave_2020}.} originates from the well-known fact that most algorithms in optimization generate sequences via exact or inexact minimization of subproblems and condition \ref{def:condition2} reflects relative inexact optimality conditions for such minimization subproblems. Condition~\ref{def:condition3} is a weak requirement which, in particular, holds when $\Psi$ is continuous. However, the latter is not mandatory in general as the nature of the algorithm (IBPG here) will force the sequences to comply with \ref{def:condition3} under a simple lower semicontinuity assumption.

Equipped with Definition~\ref{def:condition}, and when $\Psi$ satisfies the KL property, the following global convergence result holds true.
\begin{theorem}[Global convergence]\label{thm:KLglobalconv} 
	Let $\seq{\xk}$ be a bounded sequence generated by a descent-like method for $\Psi$. If $\Psi$ satisfies the KL property, then the sequence $\seq{\xk}$ has finite length, \ie, $\sum_{k \in \bbN}\normm{\xkp-\xk} < +\infty$ and it converges to $x_\star \in \crit{\Psi}$.
\end{theorem}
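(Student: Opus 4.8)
The plan is to follow the now-classical abstract convergence scheme of \cite{attouch_convergence_2013,bolte_proximal_2014}, adapted to the three-condition framework of Definition~\ref{def:condition}. First I would fix a bounded sequence $\seq{\xk}$ that is descent-like for $\Psi$, set $\wk\eqdef(\xk,\xkm)$, and let $\Omega$ be the set of limit points of $\seq{\wk}$ in $\bbR^n\times\bbR^n$. Since $\seq{\xk}$ is bounded, so is $\seq{\wk}$, hence $\Omega$ is nonempty and compact, and a standard telescoping of the sufficient decrease condition \ref{def:condition1} shows that $\Psi(\wk)$ is nonincreasing and bounded below (the series $\sum_k\normm{\xk-\xkm}^2$ converges, so $\normm{\xk-\xkm}\to 0$), hence converges to some value $\Psi_\star$. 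Combining \ref{def:condition1}, \ref{def:condition3}, and lower semicontinuity of $\Psi$ (which follows from lower semicontinuity of its constituents) gives that $\Psi\equiv\Psi_\star$ on $\Omega$ and, using \ref{def:condition2} together with $\normm{\xk-\xkm}\to 0$, that $\dist(0,\partial\Psi(\wk))\to 0$, so $\Omega\subseteq\crit{\Psi}$ and $\dist(\wk,\Omega)\to 0$.

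Next I would invoke the uniformized KL property (Lemma~\ref{lem:uniform-KL}) on the compact set $\Omega$ on which $\Psi$ is constant and KL: this produces $\eps,\eta>0$ and a concave desingularizing function $\varphi\in C^1([0,\eta[)$ with $\varphi(0)=0$, $\varphi'>0$, valid uniformly over $\Omega$. For $k$ large enough one has $\dist(\wk,\Omega)<\eps$ and $\Psi_\star<\Psi(\wk)<\Psi_\star+\eta$ (the case $\Psi(\wk)=\Psi_\star$ for some finite $k$ forces the sequence to be eventually stationary by \ref{def:condition1}, which is trivial), so the KL inequality $\varphi'(\Psi(\wk)-\Psi_\star)\,\dist(0,\partial\Psi(\wk))\geq 1$ holds. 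The core estimate is then the standard one: using concavity of $\varphi$ and writing $\Delta_k\eqdef\varphi(\Psi(\wk)-\Psi_\star)-\varphi(\Psi(\wkp)-\Psi_\star)\geq 0$, I would bound
\[
\Delta_k \;\geq\; \varphi'(\Psi(\wk)-\Psi_\star)\Ppa{\Psi(\wk)-\Psi(\wkp)} \;\geq\; \frac{\Psi(\wk)-\Psi(\wkp)}{\dist(0,\partial\Psi(\wk))} \;\geq\; \frac{\rho_1\normm{\xk-\xkm}^2}{\rho_2\Ppa{\normm{\xk-\xkm}+\normm{\xkm-\xkmm}}},
\]
where the last inequality uses \ref{def:condition1} in the numerator and \ref{def:condition2} in the denominator. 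Rearranging and applying the inequality $2\sqrt{uv}\leq u+v$ (with $u=\normm{\xk-\xkm}$, and $v$ proportional to $\normm{\xk-\xkm}+\normm{\xkm-\xkmm}$ scaled by $\rho_2/\rho_1$) yields
\[
2\normm{\xk-\xkm} \;\leq\; \frac{\rho_2}{\rho_1}\Delta_k + \tfrac12\Ppa{\normm{\xk-\xkm}+\normm{\xkm-\xkmm}},
\]
so that $\normm{\xk-\xkm}\leq \frac{2\rho_2}{\rho_1}\Delta_k + \tfrac12\normm{\xkm-\xkmm}$.

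Finally I would sum this telescoping-type inequality over $k$ from some large $K$ to $N$: the $\Delta_k$ terms telescope and are bounded by $\varphi(\Psi(\wK)-\Psi_\star)<\infty$, while the $\tfrac12\normm{\xkm-\xkmm}$ terms can be absorbed into the left-hand side (a standard argument: summing gives $\sum_{k=K}^{N}\normm{\xk-\xkm}\leq \frac{4\rho_2}{\rho_1}\varphi(\Psi(\wK)-\Psi_\star) + \normm{x_{K-1}-x_{K-2}}$, uniformly in $N$). Hence $\sum_{k}\normm{\xkp-\xk}<+\infty$, so $\seq{\xk}$ is Cauchy and converges to some $x_\star$; since $\Omega\subseteq\crit{\Psi}$ and $\seq{\xk}$ converges, $\Omega=\{x_\star\}$ (identifying $x_\star$ with $(x_\star,x_\star)$), giving $x_\star\in\crit{\Psi}$. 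The main obstacle is purely bookkeeping: correctly handling the two-step memory in \ref{def:condition2} so that the extra $\normm{\xkm-\xkmm}$ term produced by the AM–GM splitting can be reabsorbed without circularity — this is why the summation is done over a block and the boundary terms tracked carefully — together with the mild subtlety of dispensing with the degenerate case $\Psi(\wk)=\Psi_\star$ and ensuring the continuity condition \ref{def:condition3} genuinely delivers $\Psi\equiv\Psi_\star$ on $\Omega$.
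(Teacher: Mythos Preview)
Your proposal is correct and follows precisely the classical abstract convergence scheme of \cite{attouch_convergence_2013,bolte_proximal_2014} (with the two-term relative error bound handled as in \cite{Bot14,liang_multi-step_2016}), which is exactly what the paper invokes: the paper states Theorem~\ref{thm:KLglobalconv} as a known result and does not supply its own proof, referring instead to those references. Your treatment of the extended variable $\wk=(\xk,\xkm)$, the uniformized KL lemma on the compact limit set, and the AM--GM/telescoping step absorbing the extra $\normm{\xkm-\xkmm}$ term is the standard route and matches the cited sources.
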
	

\section{Proof of Global Convergence}\label{pr:thm:Global_conv_ABPG} %Theorem~\ref{thm:Global_conv_ABPG}
\begin{lemma}\label{lem:link-seq} Let $\seq{\xk}, \seq{\yk}$ and $\seq{\zk}$ be generated by IBPG (Algorithm~\ref{alg:BPGBT}). Then the following holds true
	\begin{enumerate}[label=(\roman*)]
		\item  If $\seq{\xk}$ is bounded, then the other sequences are also bounded.
		\item  If $\seq{\xk}$ has a limit, then the remaining sequences also converge to the same limit.
	\end{enumerate}
\end{lemma}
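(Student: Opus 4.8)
The plan is to exploit the fact that, thanks to the ranges imposed on the relaxation parameters $a_k$ in Algorithm~\ref{alg:BPGBT}, each of $\yk$ and $\zk$ is nothing but a convex combination of two consecutive iterates of $\seq{\xk}$; once this is observed, both claims follow at once from elementary properties of convex combinations, and neither the proximal step, nor relative smoothness, nor the KL property plays any role.

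First I would record the identities that come directly from the scheme: for every $k \ge 0$,
\[
z_{k+1} = x_k + a_k(x_{k+1}-x_k) = (1-a_k)x_k + a_k x_{k+1},
\]
and, combining $\yk = \zk + a_k(\xk-\zk) = (1-a_k)\zk + a_k\xk$ with $\zk = (1-a_{k-1})x_{k-1} + a_{k-1}x_k$,
\[
\yk = (1-a_k)(1-a_{k-1})\,x_{k-1} + \big((1-a_k)a_{k-1} + a_k\big)\,x_k .
\]
Since $a_{-1}=a_0=1$ and $a_k \in [\ua,\oa] \subseteq\, ]0,1]$ for all $k$, the coefficients above are nonnegative and sum to one; hence $z_{k+1}\in[x_k,x_{k+1}]$ and $\yk\in[x_{k-1},x_k]$, the cases $z_{-1}=x_{-1}$ and $z_0=x_0$ being trivial.

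For part (i), if $\seq{\xk}$ is bounded, set $R\eqdef\sup_{k}\normm{x_k}<\infty$; by the convex-combination property and the triangle inequality, $\normm{\zk}\le R$ and $\normm{\yk}\le R$ for all $k$, so $\seq{\yk}$ and $\seq{\zk}$ are bounded. For part (ii), suppose $\xk\to x_\star$. Writing $\yk = \lambda_k x_{k-1} + (1-\lambda_k)x_k$ with $\lambda_k\in[0,1]$ gives
\[
\normm{\yk-x_\star}\le \lambda_k\normm{x_{k-1}-x_\star} + (1-\lambda_k)\normm{x_k-x_\star}\le \max\big\{\normm{x_{k-1}-x_\star},\,\normm{x_k-x_\star}\big\},
\]
and the right-hand side tends to $0$; hence $\yk\to x_\star$. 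The identical argument applied to the convex combination defining $z_{k+1}$ yields $\zk\to x_\star$.

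I do not expect any genuine obstacle here: the only thing requiring (minimal) attention is the index bookkeeping — in particular that $\zk$ is built from $x_{k-1},x_k$ with parameter $a_{k-1}$ while $\yk$ uses $a_k$ — together with the observation that the parameter constraints in Algorithm~\ref{alg:BPGBT} are precisely what keeps every $a_k$ in $]0,1]$, so that the phrase ``convex combination'' is legitimate at every step.
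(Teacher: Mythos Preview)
Your proof is correct and follows the same idea as the paper's: both recognize that $\zk$ and $\yk$ are convex combinations of consecutive $x$-iterates (with coefficients in $[0,1]$ thanks to $a_k\in\,]0,1]$), from which boundedness and convergence are immediate. The only cosmetic difference is that you expand $\yk$ directly in terms of $x_{k-1},x_k$, whereas the paper bounds $\yk$ via $\zk$ as an intermediate; your version is in fact slightly tidier.
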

\begin{proof} 
	\begin{enumerate}[label=(\roman*)]
		\item $\zk$ being a convex combination of $\xk$ and $\xkm$, the conclusion is immediate. The same holds also for $\yk$.
		\item Suppose that $x_k \to x^{\ast}$. We have by definition
		\[
		\normm{\zk - x^{\ast}} \leq \normm{\xk-x^{\ast}} + \normm{\xk - \xkm} .
		\]
		Passing to the limit we get that $\zk \to x^{\ast}$. Moreover, 
		\[
		\normm{\yk - x^{\ast}} \leq (1-a_k) \normm{\zk - x^{\ast}} + a_k \normm{\xk - x^{\ast}} \leq \normm{\zk - x^{\ast}} + \normm{\xk - x^{\ast}} .
		\]
		and thus $\yk \to x^{\ast}$.
	\end{enumerate}
\end{proof}

Our global convergence analysis will be based on a Lyapunov analysis with the energy function $\Psi_k$ on $\bbR^{3n}$ defined as 
\[
\Psi_k(\xk,\xkm) = \Phi(\xk) - \inf \Phi + a_{k-1}^{1-\kappa} L D_{\psi}(\xk,\xkm) ,
\]
where $\kappa\in ]1,2]$ is the TSE parameter of $\psi$. The subscript $k$ underscores the fact that $\Psi_k$ depends on $a_{k-1}$. Observe that $\Psi_k$ is non-negative. The first part of $\Psi_k$ corresponds to the potential energy of IBPG seen as a dissipative (discrete) dynamical system. The second term, which captures how the iterates remain close to each other, can be interpreted as a discrete Bregman version of the kinetic energy (involving the discrete velocity) of the system. The following lemma shows that $\Psi_k$ is indeed a Lyapunov function for IBPG. 
\begin{lemma}\label{lem:lyapnuovineq}
	Under Assumptions~\ref{assump_A} and \ref{assump_B4}, there exists $\nu \in ]0,1]$ such that the sequences generated by Algorithm~\ref{alg:BPGBT} satisfy $\forall k\geq0$
	\begin{equation}\label{eq:descent-lemma}
		\Psi_{k+1}(\xkp,\xk) \leq \Psi_k(\xk,\xkm) - \nu L D_{\psi}(\xk,\xkm) - \Ppa{\oa^{1-\kappa}-1}LD_{\psi}(\xkp,\yk) .
	\end{equation}
\end{lemma}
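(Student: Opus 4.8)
The plan is to establish the inequality \eqref{eq:descent-lemma} by combining the relative smoothness of $F$ with the optimality condition of the Bregman prox step and then exploiting the triangle scaling property of $\psi$. First I would write the descent inequality produced by the prox step. Since $\xkp = \Ppa{\nabla\psi+\gak\partial G}^{-1}\Ppa{\nabla\psi(\yk)-\gak\nabla F(\yk)}$, the first-order optimality condition reads $\nabla\psi(\yk)-\gak\nabla F(\yk)-\nabla\psi(\xkp)\in\gak\partial G(\xkp)$. Testing this against $\yk-\xkp$ and using convexity of $G$ gives $G(\xkp)+\frac{1}{\gak}\pscal{\nabla\psi(\xkp)-\nabla\psi(\yk)+\gak\nabla F(\yk),\yk-\xkp}\le G(\yk)$, which via the three-point identity (Proposition~\ref{pro:bregman}\ref{pro:bregman3}) can be rewritten as
\[
G(\xkp)+\pscal{\nabla F(\yk),\xkp-\yk}+\frac{1}{\gak}\Ppa{D_\psi(\xkp,\yk)+D_\psi(\yk,\xkp)}\le G(\yk).
\]
Wait --- more carefully, the standard manipulation yields $G(\xkp)\le G(y)+\pscal{\nabla F(\yk),y-\xkp}+\frac{1}{\gak}\pscal{\nabla\psi(\yk)-\nabla\psi(\xkp),y-\xkp}$ for any $y$; choosing $y=\yk$ and applying three-point identity gives $\gak G(\xkp)+\gak\pscal{\nabla F(\yk),\xkp-\yk}+D_\psi(\xkp,\yk)\le \gak G(\yk)$.

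Next I would invoke $L$-relative smoothness of $F$ (Premise~\ref{assump_B4}): $F(\xkp)\le F(\yk)+\pscal{\nabla F(\yk),\xkp-\yk}+LD_\psi(\xkp,\yk)$. Adding the two displays with $\gak=a_k^{\kappa-1}/L$ and rearranging, the gradient inner-product terms cancel and one obtains
\[
\Phi(\xkp)\le \Phi(\yk)-\Ppa{\gak^{-1}-L}D_\psi(\xkp,\yk)=\Phi(\yk)-\Ppa{a_k^{1-\kappa}-1}LD_\psi(\xkp,\yk).
\]
Now $\yk=(1-a_k)\zk+a_k\xk$ and $\xkp$ is defined so that $\zkp=\xk+a_k(\xkp-\xk)=(1-a_k)\xk+a_k\xkp$; hence $\zkp=(1-a_k)\xk+a_k\xkp$ and $\yk=(1-a_k)\zk+a_k\xk$, so the pair $(\yk,\zkp)$ is obtained from $(\xk,\zk,\xkp)$ with common weight $a_k$. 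Then the triangle scaling property (Definition~\ref{df:triangle-scaling}) applied with base point $\xk$ gives $D_\psi\Ppa{(1-a_k)\xk+a_k\xkp,\,(1-a_k)\xk+a_k\zk}\le a_k^\kappa D_\psi(\xkp,\zk)$; I need instead the bound on $D_\psi(\xkp,\yk)$ from below in terms of $D_\psi$ of iterates to feed the kinetic term, so I would rather use TSP to relate $D_\psi(\xkp,\yk)$ --- actually the clean route is: by convexity of $\Phi$ along the segment... no, $\Phi$ need not be convex. Instead use $\Phi(\yk)\le (1-a_k)\Phi(\zk)+a_k\Phi(\xk)$? That also needs convexity. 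The correct standard trick here (as in Hanzely--Richtárik--Xiao) is to compare $\Phi(\xk)$ and $\Phi(\zk)$ across iterations and absorb; I would follow the nonconvex Lyapunov route: use $\Phi(\yk)\le\Phi(\xk)$ only when a descent structure is available, which it is not directly, so instead expand $\Psi_{k+1}-\Psi_k$ directly, substitute the above inequality for $\Phi(\xkp)$, write $\Phi(\yk)$ back in terms of $\Phi(\xk)$ via the smoothness of $F$ and convexity of $G$ over the segment $[\zk,\xk]$...

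Given the delicacy, the key technical steps in order are: (1) derive the prox optimality descent $\gak(\Phi(\xkp)-\Phi(\yk))\le -\Ppa{1-\gak L}D_\psi(\xkp,\yk)$ via three-point identity and relative smoothness; (2) control $\Phi(\yk)$ in terms of $\Phi(\xk)$ plus a term comparable to $D_\psi(\xk,\xkm)$ using that $\yk$ lies on the segment determined by $\xk,\xkm$ and that $F$ is smooth while $G$ is convex (this is where the algorithmic constraint $a_{k+1}\in[\ua,\oa]$ with $(a_{k+1}^{1-\kappa}+1)^{1/\kappa}(1-a_{k+1})<a_k^{1/\kappa-1}/(1-a_k)$ enters, guaranteeing a uniform $\nu>0$); (3) invoke TSP of $\psi$ to dominate the Bregman kinetic terms $D_\psi(\xkp,\xk)$ by $a_k^\kappa D_\psi(\xkp,\zk)$-type quantities and match coefficients with $a_{k-1}^{1-\kappa}L D_\psi(\xk,\xkm)$; (4) collect terms so that the coefficient of $D_\psi(\xkp,\yk)$ is at least $\Ppa{\oa^{1-\kappa}-1}L\ge 0$ and of $D_\psi(\xk,\xkm)$ is at least $\nu L$ for some $\nu\in]0,1]$ uniform in $k$. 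The main obstacle I anticipate is step (3)–(4): correctly chaining the triangle scaling inequality so that the extrapolation-induced Bregman term at step $k+1$ is absorbed by the kinetic term at step $k$ with a strictly positive residual $\nu L D_\psi(\xk,\xkm)$; this is precisely where the somewhat unusual parameter condition in Algorithm~\ref{alg:BPGBT} is tailored to close the recursion, and verifying that it yields a $k$-uniform $\nu$ (using $a_k\in[\ua,\oa]$) is the crux of the argument. The nonconvexity of $\Phi$ (only $F$ smooth, $G$ convex) means one cannot use a naive convex-combination bound on $\Phi(\yk)$, so care is needed to route the estimate through $F$'s relative smoothness and $G$'s convexity separately along the segment, which is the subtle part of step (2).
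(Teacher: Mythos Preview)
Your proposal has a genuine gap: you test the prox optimality at the wrong point. By choosing $y=\yk$ in the three-point inequality you obtain $\Phi(\xkp)\le \Phi(\yk)-(\gak^{-1}-L)D_\psi(\xkp,\yk)$, and you then spend the rest of the sketch trying to pass from $\Phi(\yk)$ back to $\Phi(\xk)$, correctly noting that nonconvexity of $\Phi$ blocks the natural convex-combination bound. This detour is unnecessary and, as you suspect, does not close.

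The paper's route is to test the prox inequality at $x=\xk$ from the start. Combining the Chen--Teboulle three-point inequality for the prox with relative smoothness of $F$ (used twice, once for the descent and once to absorb $-D_F(\xk,\yk)$) yields directly
\[
\Phi(\xkp)+\gak^{-1}D_\psi(\xkp,\xk)\;\le\;\Phi(\xk)+\Ppa{\gak^{-1}+L}D_\psi(\xk,\yk)-\Ppa{\gak^{-1}-L}D_\psi(\xkp,\yk).
\]
This single line already contains $\Phi(\xk)$ on the right and the kinetic term $D_\psi(\xkp,\xk)$ on the left, so no comparison of $\Phi(\yk)$ with $\Phi(\xk)$ is ever needed.

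The second point you are missing is how TSP is actually used. One does not bound $D_\psi(\xkp,\xk)$; one bounds $D_\psi(\xk,\yk)$, and this is done by applying TSP \emph{twice}: since $\yk=(1-a_k)\zk+a_k\xk$ and $\zk=(1-a_{k-1})\xkm+a_{k-1}\xk$, writing $\xk$ as the same convex combination of $\xk$ with itself gives
\[
D_\psi(\xk,\yk)\le (1-a_k)^{\kappa}D_\psi(\xk,\zk)\le (1-a_k)^{\kappa}(1-a_{k-1})^{\kappa}D_\psi(\xk,\xkm).
\]
Plugging this in and using $\gak=a_k^{\kappa-1}/L$ together with the parameter condition $(L+\gak^{-1})(1-a_k)^{\kappa}(1-a_{k-1})^{\kappa}<\gamma_{k-1}^{-1}$ (which is exactly what the choice in Algorithm~\ref{alg:BPGBT} guarantees) produces the uniform $\nu\in\,]0,1]$ and the stated inequality. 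Your steps (2)--(4) as written are therefore off target; once you test at $\xk$ and chain TSP as above, the recursion closes immediately.
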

\begin{proof} 
	By $L$-smoothness of $F$ relative to $\psi$, we have 
	\begin{align*}
		\Phi(\xkp) 
		&\leq F(\yk)+\pscal{\nabla F(\yk),\xkp-\yk}+G(\xkp) + L D_{\psi}(\xkp,\yk) . \label{eq:cont-cond}\numberthis
	\end{align*}
	Observe that 
	\begin{equation}\label{eq:defxk+1}
		\xkp \in \Argmin_{x \in \bbR^n} F(y_k) + \pscal{\nabla F(\yk),x-\yk}+G(x)+\frac{1}{\gamma_k}D_{\psi}(x,\yk) .
	\end{equation}
	Then convexity of $G$ gives that $\forall x \in \bbR^n$ (see \eg \cite[Lemma~3.2]{chen_convergence_1993})
	\begin{multline}\label{eq:ineqdefxk+1}
		F(y_k) + \pscal{\nabla F(\yk),\xkp-\yk}+G(\xkp) + \frac{1}{\gamma_k}D_{\psi}(\xkp,\yk) \leq F(y_k) + \pscal{\nabla F(\yk),x-\yk}+G(x)\\
		+\frac{1}{\gamma_k}D_{\psi}(x,\yk) - \frac{1}{\gamma_k}D_{\psi}(\xkp,x)  .
	\end{multline}
	Inserting this into \eqref{eq:cont-cond}, we get
	\begin{align*}
		\Phi(\xkp) 
		&\leq F(\yk) + \pscal{\nabla F(\yk),x-\yk}+G(x) + \frac{1}{\gamma_k}D_{\psi}(x,\yk) - \frac{1}{\gamma_k}D_{\psi}(\xkp,x) - \Ppa{\frac{1}{\gamma_k}-L}D_{\psi}(\xkp,\yk) \\
		&= \Phi(x) - D_F(x,\yk) + \frac{1}{\gamma_k}D_{\psi}(x,\yk) - \frac{1}{\gamma_k}D_{\psi}(\xkp,x) - \Ppa{\frac{1}{\gamma_k}-L}D_{\psi}(\xkp,\yk) \\
		&\leq \Phi(x) + \Ppa{\frac{1}{\gamma_k}+L}D_{\psi}(x,\yk) - \frac{1}{\gamma_k}D_{\psi}(\xkp,x) - \Ppa{\frac{1}{\gamma_k}-L}D_{\psi}(\xkp,\yk) ,
	\end{align*}
	where we used again $L$-smoothness of $F$ relative to $\psi$.
	Applying this inequality at $x=\xk$, we obtain
	\begin{align*}
		\Phi(\xkp)+\frac{1}{\gamma_k}D_{\psi}(\xkp,\xk)
		\leq \Phi(\xk) + \Ppa{\frac{1}{\gamma_k}+L}D_{\psi}(\xk,\yk) - \Ppa{\frac{1}{\gamma_k}-L}D_{\psi}(\xkp,\yk) .   
	\end{align*}
	We now use the TSP property twice to get
	\begin{align*}
		D_{\psi}(\xk,\yk) &= D_{\psi}((1-a_k)\xk+a_k\xk,(1-a_k)\zk+a_k\xk) \leq (1-a_k)^{\kappa}D_{\psi}(\xk,\zk) \qandq \\
		D_{\psi}(\xk,\zk) &= D_{\psi}((1-a_{k-1})\xk+a_{k-1}\xk,(1-a_{k-1})\xkm+a_{k-1}\xk) \leq (1-a_{k-1})^{\kappa}D_{\psi}(\xk,\xkm) .
	\end{align*}
	Combining the above inequalities, we arrive at 
	\begin{multline*}
		\Phi(\xkp)+\frac{1}{\gamma_k}D_{\psi}(\xkp,\xk)
		\leq \Phi(\xk) + \Ppa{\frac{1}{\gamma_k}+L}(1-a_k)^{\kappa}(1-a_{k-1})^{\kappa}D_{\psi}(\xk,\xkm) \\
		- \Ppa{\frac{1}{\gamma_k}-L}D_{\psi}(\xkp,\yk) .
	\end{multline*}
	Thus, in view of the choice of the parameters, there exists $\nu ]0,1]$ such that 
	\begin{align*}
		\Phi(\xkp)+a_k^{1-\kappa}LD_{\psi}(\xkp,\xk)
		&\leq \Phi(\xk) + a_{k-1}^{1-\kappa}L D_{\psi}(\xk,\xkm) - \nu a_{k-1}^{1-\kappa}L D_{\psi}(\xk,\xkm) \\
		&- \Ppa{a_k^{1-\kappa}-1}LD_{\psi}(\xkp,\yk) \\
		&\leq \Phi(\xk) + a_{k-1}^{1-\kappa}L D_{\psi}(\xk,\xkm) - \nu L D_{\psi}(\xk,\xkm) \\
		&- \Ppa{\oa^{1-\kappa}-1}LD_{\psi}(\xkp,\yk) .
	\end{align*}
	Subtracting $\inf \Phi$ on both sides, we get the claimed inequality. 
\end{proof}

%To pursue the convergence analysis, let us define the following  Lyapunov function
%\begin{equation}
%\calL^{k}(\zk,\zkm)\eqdef \Ppa{\Phi(\zk)-v(\calP)}  +\akm^{\kappa}LD_{\psi}(\zkm,\xk). 
%\end{equation}
%The first part of this Lyapunov function captures how close the iterates are to the optimal value of the function. The second part is an implicit way to measure how the iterates of the algorithm stay close to each other up to a factor. Indeed, let us observe that for $\forall k\geq0,\quad\xk=\frac{1}{\akm}\zk-\Ppa{\frac{1}{\akm}-1}\zkm$. Equipped with this function, we can assert the following proposition.
Capitalizing on the above descent property of the Lyapunov function $\Psi_k$, we get some preliminary convergence properties\footnote{Only convexity of $\psi$ is needed for claim \ref{pro:seq_conv-i} to hold.}. 
\begin{proposition}\label{pro:seq_conv} 
	Under Assumptions~\ref{assump_A} and \ref{assump_B1}-\ref{assump_B4}, the sequences generated by Algorithm~\ref{alg:BPGBT} (IBPG) are such that:  
	\begin{enumerate}[label=(\roman*)]
		\item \label{pro:seq_conv-i} The sequence $\Ba{\Psi_k(\xk,\xkm)}_{k\in\bbN}$ is decreasing and thus $\lim_{k \to +\infty} \Psi_k(\xk,\xkm) \geq 0$ exists.
		\item \label{pro:seq_conv-ii} $\sum_{k \in \bbN}\normm{\xk-\xkm}^2<\infty$ and thus $\lim_{k \to \infty} \normm{\xk-\xkm}=0$. 
		\item \label{pro:seq_conv-iii} We have the rate
		\[
		\min\limits_{0 \leq i\leq k} \normm{x_i-x_{i-1}}^2\leq \frac{2(\sigma_{\psi}\nu L)^{-1}\Psi_0(x_0,x_{-1})}{k+1} .
		\]
	\end{enumerate}
\end{proposition}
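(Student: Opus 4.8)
The plan is to use the descent inequality \eqref{eq:descent-lemma} of Lemma~\ref{lem:lyapnuovineq} as the single driving estimate and extract the three claims successively by monotonicity, telescoping, and strong convexity of $\psi$. First I would observe that both terms subtracted on the right-hand side of \eqref{eq:descent-lemma} are nonnegative: convexity of $\psi$ gives $D_\psi \geq 0$ by Proposition~\ref{pro:bregman}\ref{pro:bregman1}, and since $\kappa \in ]1,2]$ and $\oa \leq 1$ one has $\oa^{1-\kappa} = \oa^{-(\kappa-1)} \geq 1$, so $\oa^{1-\kappa}-1 \geq 0$. Hence \eqref{eq:descent-lemma} implies
\[
\Psi_{k+1}(\xkp,\xk) \leq \Psi_k(\xk,\xkm) - \nu L D_\psi(\xk,\xkm) \leq \Psi_k(\xk,\xkm), \quad \forall k \in \bbN .
\]
Since $\Psi_k$ is non-negative by its very definition, the sequence $\Ba{\Psi_k(\xk,\xkm)}_{k\in\bbN}$ is nonincreasing and bounded below, hence it converges to a nonnegative limit; this is claim~\ref{pro:seq_conv-i}, and note that only convexity (not strong convexity) of $\psi$ is used here.

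Next I would telescope the inequality $\nu L D_\psi(\xk,\xkm) \leq \Psi_k(\xk,\xkm) - \Psi_{k+1}(\xkp,\xk)$ over $k = 0,\dots,N$, obtaining
\[
\nu L \sum_{k=0}^{N} D_\psi(\xk,\xkm) \leq \Psi_0(x_0,x_{-1}) - \Psi_{N+1}(x_{N+1},x_N) \leq \Psi_0(x_0,x_{-1}) ,
\]
where the last inequality uses $\Psi_{N+1}\geq 0$. Letting $N \to \infty$ gives $\sum_{k\in\bbN} D_\psi(\xk,\xkm) < \infty$. Invoking now Premise~\ref{assump_B1}, strong convexity of $\psi$ yields $D_\psi(x,y) \geq \frac{\sigma_\psi}{2}\normm{x-y}^2$, so $\sum_{k\in\bbN}\normm{\xk-\xkm}^2 \leq \frac{2}{\sigma_\psi}\sum_{k\in\bbN} D_\psi(\xk,\xkm) < \infty$, which forces $\normm{\xk-\xkm}\to 0$ and establishes claim~\ref{pro:seq_conv-ii}. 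For the rate, I would stop the telescoped sum at index $k$, bound each of its $k+1$ summands from below by $\min_{0\leq i \leq k}\normm{x_i-x_{i-1}}^2$, and again use strong convexity:
\[
(k+1)\,\frac{\sigma_\psi \nu L}{2}\, \min_{0\leq i \leq k}\normm{x_i-x_{i-1}}^2 \leq \nu L \sum_{i=0}^{k} D_\psi(x_i,x_{i-1}) \leq \Psi_0(x_0,x_{-1}) ,
\]
and rearranging (using $\sigma_\psi \nu L > 0$) gives exactly claim~\ref{pro:seq_conv-iii}.

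There is no genuine obstacle here once Lemma~\ref{lem:lyapnuovineq} is granted: the only points deserving care are checking the sign of $1-\kappa$ so that the coefficient $\oa^{1-\kappa}-1$ is indeed nonnegative, and keeping track of which hypothesis is needed where — claim~\ref{pro:seq_conv-i} requires only convexity of $\psi$, whereas the quantitative claims~\ref{pro:seq_conv-ii} and~\ref{pro:seq_conv-iii} rely on the $\sigma_\psi$-strong convexity of Premise~\ref{assump_B1}. Everything else, namely monotone convergence of a bounded-below nonincreasing sequence, telescoping, and the quadratic lower bound on the Bregman divergence, is routine.
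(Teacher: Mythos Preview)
Your proof is correct and follows essentially the same approach as the paper: invoke the descent inequality of Lemma~\ref{lem:lyapnuovineq}, drop the nonnegative term involving $D_\psi(\xkp,\yk)$, obtain monotonicity of $\Psi_k$, telescope, and apply $\sigma_\psi$-strong convexity to pass from $D_\psi$ to squared norms. You add the explicit check that $\oa^{1-\kappa}-1\geq 0$ and flag which claim needs only convexity of $\psi$, both of which the paper leaves implicit or in a footnote, but the argument is otherwise identical.
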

\begin{proof}
	\begin{enumerate}[label=(\roman*)]
		\item The first claim comes from Lemma~\ref{lem:lyapnuovineq} and non-negativity of $D_\psi$ thanks to convexity of $\psi$. The existence of the limit then follows as $\Psi_k$ is non-negative and decreasing (recall that $\Phi$ is bounded from below, $a_k \geq \ua > 0$ and $\kappa \in ]0,1]$). 
		
		\item We use $\sigma_{\psi}$-strong convexity of $\psi$ and then sum \eqref{eq:descent-lemma} dropping the last non-negative term to get
		\begin{align*}
			\frac{\sigma_{\psi}\nu L}{2}\sum_{i=0}^k \normm{x_i - x_{i-1}}^2
			&\leq \nu L\sum_{i=0}^k D_{\psi}(x_i,x_{i-1}) \\
			&\leq \Psi_0(x_0,x_{-1}) - \Psi_k(\xk,\xkm) \leq  \Psi_0(x_0,x_{-1}) . \numberthis \label{eq:sumzx}
		\end{align*}
		Passing to the limit as $k \to +\infty$ we get the summability claim.
		
		\item We have
		\[
		(k+1)\min\limits_{0 \leq i\leq k} \normm{x_i-x_{i-1}}^2 \leq \sum_{i=0}^k \normm{x_i-x_{i-1}}^2 . 
		\] 
		Combining this with \eqref{eq:sumzx}, we conclude. 
	\end{enumerate}
\end{proof}

To prove global convergence, it is sufficient to show that IBPG is a descent-like method according to Definition~\ref{def:condition} and then to invoke Theorem~\ref{thm:KLglobalconv}. For this we need to a construct an appropriate function $\Psi$ that verifies the conditions of Definition~\ref{def:condition}. The sequence of functions $\Psi_k$ could do the job if $a_k$ is taken fixed, say equal to $a \in [\ua,\oa]$, for all $k \geq K$, where $K$ is arbitrarily large (see the discussion in Remark~\ref{rmk: global_conv}). We therefore consider the energy function
\[
\Psi(\xk,\xkm)=
\begin{cases} 
	\Psi_k(\xk,\xkm)&\si\quad k < K ,\\
	\Phi(\xk) - \inf \Phi + a^{1-\kappa}LD_{\psi}(\xk,\xkm) & \odwz .
\end{cases}
\]
Observe first that $\Psi$ is KL since both $\Phi$ and $\psi$ are. The following proposition shows that the sequence generated by IBPG is a descent-like sequence for the new Lyapunov function $\Psi$. 
%It is obvious to see that the statement of Proposition~\ref{pro:seq_conv} also holds for the new Lyapunov function. From now on, when we refer to this proposition, we refer to the version depending on $\Psi$. 
\begin{proposition}\label{pro:grad_like_des} 
	Assume that Assumptions~\ref{assump_A} and Assumptions~\ref{assump_B} hold. Let $\seq{\xk}$ be a bounded sequence generated by Algorithm~\ref{alg:BPGBT}. Then $\seq{\xk}$ is a gradient-like descent sequence. Moreover the set of cluster points of $\seq{\xk}$ is a nonempty compact set of $\crit{\Phi}$.    
\end{proposition}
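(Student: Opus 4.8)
The plan is to verify the three conditions in Definition~\ref{def:condition} for the Lyapunov function $\Psi$, and then to obtain the claim about cluster points as a standard byproduct of the descent property together with Proposition~\ref{pro:seq_conv}. The key observation is that for $k \geq K$ the parameter $a_k$ is frozen at $a \in [\ua,\oa]$, so $\Psi(\xk,\xkm) = \Psi_{k+1}(\xkp,\xk)$-type formulas all use the same constant $a^{1-\kappa}$, and the sequence $\seq{\Psi(\xk,\xkm)}_{k \geq K}$ is exactly the tail of the sequence studied in Lemma~\ref{lem:lyapnuovineq} and Proposition~\ref{pro:seq_conv}. Since we only care about asymptotic behaviour, the finitely many early terms ($k < K$) are irrelevant.

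First I would establish \ref{def:condition1}: Lemma~\ref{lem:lyapnuovineq} gives, for $k \geq K$,
\[
\Psi(\xkp,\xk) \leq \Psi(\xk,\xkm) - \nu L D_{\psi}(\xk,\xkm) - \Ppa{\oa^{1-\kappa}-1}LD_{\psi}(\xkp,\yk) ,
\]
and dropping the last non-negative term and using $\sigma_{\psi}$-strong convexity of $\psi$ (Proposition~\ref{pro:bregman}\ref{pro:bregman4} / \ref{assump_B1}) yields $\rho_1 \normm{\xk - \xkm}^2 \leq \Psi(\xk,\xkm) - \Psi(\xkp,\xk)$ with $\rho_1 = \sigma_{\psi}\nu L/2$. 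Next, \ref{def:condition2}: I would write down the optimality condition for the subproblem \eqref{eq:defxk+1} defining $\xkp$, namely $0 \in \nabla F(\yk) + \partial G(\xkp) + \gamma_k^{-1}(\nabla\psi(\xkp) - \nabla\psi(\yk))$, so that $v_{k+1} \eqdef \gamma_k^{-1}(\nabla\psi(\yk) - \nabla\psi(\xkp)) + \nabla F(\xkp) - \nabla F(\yk) \in \nabla F(\xkp) + \partial G(\xkp) = \partial \Phi(\xkp)$. Appending the gradient of the Bregman kinetic term, I get an element of $\partial \Psi(\xkp,\xk)$. Then I bound its norm: since $\seq{\xk}$ is bounded, so are $\seq{\yk},\seq{\zk}$ (Lemma~\ref{lem:link-seq}), hence they all live in a compact convex set on which $\nabla\psi$ and $\nabla F$ are Lipschitz (\ref{assump_B3}); this turns $\normm{v_{k+1}}$ into a constant times $\normm{\xkp - \yk}$, which in turn is controlled by $\normm{\xkp - \xk} + \normm{\xk - \xkm}$ via $\yk = \xk + (\text{inertial})\cdot(\xk - \xkm)$ with bounded coefficients. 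This gives \ref{def:condition2}.

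For \ref{def:condition3} (the continuity/closedness condition), the delicate point is that $\Phi = F + G$ with $G$ merely lower semicontinuous, so $\Psi$ need not be continuous; but, as noted in the text after Definition~\ref{def:condition}, the algorithm forces the inequality. Concretely, if $\xk \to x^\ast$ along a subsequence $\calK$, then $\yk \to x^\ast$ and $\xkp \to x^\ast$ along $\calK$ too (Proposition~\ref{pro:seq_conv}\ref{pro:seq_conv-ii} and Lemma~\ref{lem:link-seq}), so $D_\psi(\xk,\xkm) \to 0$ by continuity of $\psi, \nabla\psi$. It remains to show $\limsup_{\calK} \Phi(\xk) \leq \Phi(x^\ast)$: I would feed $x = x^\ast$ into the inequality \eqref{eq:ineqdefxk+1} (valid for all $x$), pass to the limit along $\calK$ using continuity of $F, \nabla F, \psi$ and the fact that all Bregman terms on the right tend to $0$, to get $\limsup_{\calK}(F(\xkp) + G(\xkp)) \leq F(x^\ast) + G(x^\ast)$, which combined with lower semicontinuity of $G$ (hence of $\Phi$) gives $\Phi(\xk) \to \Phi(x^\ast)$ along $\calK$, and therefore $\limsup_{\calK}\Psi(\xk,\xkm) \leq \Psi(x^\ast)$. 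With all three conditions in hand, $\seq{\xk}$ is a descent-like sequence. Finally, the cluster-point claim follows from boundedness of $\seq{\xk}$ (so cluster points exist and form a compact set), the fact that $\Psi_k(\xk,\xkm)$ converges (Proposition~\ref{pro:seq_conv}\ref{pro:seq_conv-i}) so $\Psi$ is constant on the cluster set, $\normm{\xk - \xkm} \to 0$ (Proposition~\ref{pro:seq_conv}\ref{pro:seq_conv-ii}) together with \ref{def:condition2} forcing $\dist(0,\partial\Phi(\xk)) \to 0$, and \ref{def:condition3} giving $\Phi$-attentive convergence; hence every cluster point lies in $\crit{\Phi}$. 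I expect the main obstacle to be the careful handling of \ref{def:condition3} — in particular correctly invoking \eqref{eq:ineqdefxk+1} at the limit point and combining the resulting $\limsup$ bound with lower semicontinuity to upgrade to genuine $\Phi$-attentive convergence along subsequences — since this is where the nonsmoothness of $G$ and the lack of continuity of $\Psi$ interact.
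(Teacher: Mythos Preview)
Your proposal is correct and follows essentially the same route as the paper: you verify \ref{def:condition1} via Lemma~\ref{lem:lyapnuovineq} and strong convexity of $\psi$, you verify \ref{def:condition2} by extracting a subgradient of $\Psi$ from the optimality condition of the proximal step and bounding it via \ref{assump_B3} and the expansion $\xk-\yk=(1-a)^2(\xk-\xkm)$, and you verify \ref{def:condition3} by plugging $x=x^\ast$ into \eqref{eq:ineqdefxk+1} to control $\limsup G(\xkp)$, exactly as the paper does. The only cosmetic difference is that the paper simplifies the first component of the subgradient by cancelling the $\nabla\psi(\xkp)$ terms before bounding, whereas you keep them separate and then ``append'' the Bregman kinetic gradient; either way the same Lipschitz bound on the common bounded set yields the required $\rho_2$.
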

\begin{proof}{\ } 
	\begin{itemize}
		\item \textit{ Sufficient decrease condition.} From Lemma~\ref{lem:lyapnuovineq} $\sigma_{\psi}$-strong convexity of $\psi$, we have $\forall k\in\bbN$
		\begin{align*}
			\Psi(\xkp,\xk) 
			&\leq \Psi(\xk,\xkm) - \nu L D_{\psi}(\xk,\xkm) \\
			&\leq \Psi(\xk,\xkm) - \frac{\sigma_{\psi}\nu L}{2} \normm{\xk-\xkm}^2
		\end{align*}
		which shows \ref{def:condition1} in Definition~\ref{def:condition}.
		\item\label{pr_relative-error} \textit{Relative error condition.} We have to show \ref{def:condition2} in Definition~\ref{def:condition}. 
		
		$\psi$ is $C^2$ hence $D_\psi(\cdot,\cdot)$ is $C^1$ jointly in its arguments. The sum rule of the limiting subdifferential applies in this case and tells us that for $k \geq K$, we have
		\begin{align}
			\partial\Psi\pa{\xkp,\xk}=
			\begin{pmatrix}
				\nabla F(\xkp)+\partial G(\xkp)+a^{1-\kappa}L(\nabla\psi(\xkp)-\nabla\psi(\xk)) \\
				- a^{1-\kappa}L\nabla^2\psi(\xk)(\xkp-\xk)
			\end{pmatrix}. \label{eq:subdifpsi}
		\end{align}
		From the update equation of $\xkp$ by IBPG, we have
		\begin{align}\label{eq:subdifGxk+1}
			\nabla\psi(\yk) - \gamma \nabla F(\yk) - \nabla\psi(\xkp) \in \gamma \partial G(\xkp) ,
		\end{align}
		where $\gamma = a^{\kappa-1}/L$. Set $v_{k+1} \eqdef (v_{k+1}^1,v_{k+1}^2)$ where
		\begin{align*}
			v_{k+1}^1 &= \Ppa{\nabla F(\xkp) - \nabla F(\yk)} + a^{1-\kappa}L\Ppa{\nabla\psi(\yk) - \nabla\psi(\xk)} \\
			\qandq 
			v_{k+1}^2 &= - a^{1-\kappa}L\nabla^2\psi(\xk)(\xkp-\xk) .  
		\end{align*}
		In view of \eqref{eq:subdifpsi} and \eqref{eq:subdifGxk+1}, we have
		\begin{align*}
			v_{k+1} \in \partial\Psi\pa{\xkp,\xk} .
		\end{align*}
		We shall now bound $\normm{v_{k+1}}^2=\normm{v_{k+1}^1}^2+\normm{v_{k+1}^2}^2$. Since $\seq{\xk}$ is bounded and $\nabla\psi$ is Lipschitz continuous on any bounded subset by \ref{assump_B3}, there exists $L_{\psi}$ such that
		\[
		\normm{v_{k+1}^2} \leq a^{1-\kappa} LL_\Psi\normm{\xkp-\xk} \leq \ua^{1-\kappa} LL_{\psi}\normm{\xkp-\xk} .
		\]  
		Moreover, $\seq{\xk}$ is also bounded by Lemma~\ref{lem:link-seq}. Assumption~\ref{assump_B3} then entails that there exist $L_F > 0$ such that    
		\begin{align*}
			\normm{v^1_{k+1}}
			&\leq L_F\normm{\xkp-\xk} + L_F\normm{\xk-\yk} + \ua^{1-\kappa}LL_{\psi}\normm{\xk-\yk} .
		\end{align*}
		where we used that $a \in ]0,1]$ and $\kappa \in ]1,2]$. Now, by definition of the iterates
		\[
		\xk - \yk = \xk - \zk - a(\xk-\zk) = (1-a)(\xk - \xkm - a(\xk-\xkm)) = (1-a)^2(\xk-\xkm) .
		\]
		Therefore
		\begin{align*}
			\normm{v^1_{k+1}}
			&\leq L_F\normm{\xkp-\xk} + (L_F + \ua^{1-\kappa}LL_{\psi})(1-a)^2\normm{\xk-\xkm} \\
			&\leq L_F\normm{\xkp-\xk} + (L_F + \ua^{1-\kappa}LL_{\psi})(1-\ua)^2\normm{\xk-\xkm} .
		\end{align*}
		Taking $\rho_2=L_F+(L_F + \ua^{1-\kappa}LL_{\psi})(1-\ua)^2+\ua^{1-\kappa} LL_{\psi}$, we get the claim.
		
		\item \textit{Continuity  condition.} Since $\seq{\xk}$ is bounded, its set of cluster points is a nonempty set. It is also compact set as the intersection of compact sets. Let us consider a subsequence $(\xkj)_{j\in\bbN}$ that converges to some limit $x^{\ast}$. From Proposition~\ref{pro:seq_conv}\ref{pro:seq_conv-i} and Lemma~\ref{lem:link-seq}, we have that $\seq{\xkjp}$ and $\seq{\ykj}$ converge to the same limit. Now arguing as in \eqref{eq:defxk+1}-\eqref{eq:ineqdefxk+1}, we get
		\begin{align*}
			G(\xkjp) 
			&\leq G(x^{\ast}) + \pscal{\nabla F(\ykj),x^{\ast}-\xkjp} - \frac{1}{\gamma}D_{\psi}(\xkjp,\ykj)
			+\frac{1}{\gamma}D_{\psi}(x^{\ast},\ykj) - \frac{1}{\gamma}D_{\psi}(\xkjp,x^{\ast})  \\
			&\leq G(x^{\ast}) + \pscal{\nabla F(\ykj),x^{\ast}-\xkjp} + \frac{1}{\gamma}D_{\psi}(x^{\ast},\ykj) .
		\end{align*}
		Passing to the limit as $j \to +\infty$ and using continuity of $D_{\psi}$ we get that
		\[
		\limsup_{j \to +\infty} G(\xkjp) \leq G(x^{\ast}) .
		\]
		Combining this with continuity of $F$ proves \ref{def:condition3} in Definition~\ref{def:condition}.
	\end{itemize} 
	Let $\xkj \to x^{\ast}$. We argued above that $\ykj \to x^{\ast}$ and $\xkjp \to x^{\ast}$. We then have from \eqref{eq:subdifGxk+1}, continuity of $\nabla \psi$ and $\nabla F$, and sequential closedness of $\nabla G$ that
	\[
	-\nabla F(x^{\ast}) \in \partial G(x^{\ast}) ,
	\]
	\ie $x^{\ast} \in \crit{\Phi}$.
\end{proof}

\paragraph{Proof of Theorem~\ref{thm:Global_conv_ABPG}}{\ }
\begin{proof}
	\begin{enumerate}[label=(\roman*)]
		\item This claim comes from Proposition~\ref{pro:seq_conv}.
		\item This is a consequence of Proposition~\ref{pro:grad_like_des} and Theorem~\ref{thm:KLglobalconv} after observing from \eqref{eq:subdifpsi} that $\crit{\Psi}=\{(x_\star,x_\star): \; x_\star\in\crit{\Phi}\}$.
		\item We now turn to proving convergence to a global minimizer. We introduce the following extended variable, $\wtilde{\xk}=(\xk,\xkm)\in\bbR^n\times\bbR^n$ such that $\Psi\pa{\xk,\xkm}=\Psi\pa{\wtilde{\xk}}$ for all $k\in\bbN$. Let us choose radius $r>\rho_2 >0$ such that \(\eta<\rho_1(r - \rho_2)^2\). Let us suppose that the initial point $\xo$ is chosen such that the following conditions hold,
		\begin{align}
			&\Phi(x^\star)=\Psi(\wtilde{x^\star})\leq\Psi(\wtilde{x_0}) < \Psi(\wtilde{x^\star})+\eta=\Phi(x^\star)+\eta \label{eq:local-conv-asump1}\\
			&\normm{\xo-\xsol} + 2\sqrt{ \frac{\Psi(\wtilde{x_0}) - \Psi(\wtilde{x^\star})}{\rho_1}} + \frac{\rho_2}{\rho_1}\varphi\BPa{\Psi(\wtilde{x_0})-\Psi(\wtilde{x^\star})}<\rho.\label{eq:local-conv-asump2}
		\end{align}
		The condition ~\ref{def:condition1}  combined with \eqref{eq:local-conv-asump1} imply that  for any $k\in\bbN, \Psi\pa{\wtilde{x^\star}}\leq\Psi\pa{\wtilde{\xkp}}\leq\Psi\pa{\wtilde{x_0}}<\Psi(\wtilde{x^\star})+\eta$, and moreover
		\begin{equation}\label{eq:bound-xk}
			\normm{\xkp-\xk}\leq\sqrt{\frac{\Psi\pa{\wtilde{\xkp}}-\Psi\pa{\wtilde{x_{k+2}}}}{\rho_1}}\leq \sqrt{\frac{\Psi\pa{\wtilde{\xkp}}-\Psi\pa{\wtilde{x^\star}}}{\rho_1}}.
		\end{equation}
		We deduce that if for any $k\in\bbN, \xk\in B(\xsol,\rho)$ then $\xkp\in B(\xsol,r)$. Indeed, by the triangle inequality 
		\begin{equation}\label{eq:xkp-xsol}
			\normm{\xkp-\xsol}\leq \normm{\xk-\xsol}+\sqrt{\frac{\Psi\pa{\wtilde{\xkp}}-\Psi\pa{\wtilde{x^\star}}}{\rho_1}}=\rho+(r-\rho)=r.
		\end{equation}
		It remains to show that $\forall k\in\bbN, \xk\in B(\xsol,\rho)$. We argue by induction. The triangle inequality gives 
		\begin{align*}
			\normm{x_1 - \xsol}\leq \normm{x_0-\xsol}+\sqrt{\frac{\Psi\pa{\wtilde{x_1}}-\Psi\pa{\wtilde{x^\star}}}{\rho_1}}\leq \normm{x_0-\xsol}+\sqrt{\frac{\Psi\pa{\wtilde{x_0}}-\Psi\pa{\wtilde{x^\star}}}{\rho_1}}<\rho,
		\end{align*}
		which means that $x_1 \in B(\xsol,\rho)$. We also have
		\begin{align*}
			\normm{\xkp-\xsol}&\leq \normm{z_0-\xsol}+ 2\normm{x_1-x_0}+\sum_{j=1}^k\normm{\xk-\xkm}, 
		\end{align*}
		Standard arguments with the KL inequality show that
		\begin{align}\label{eq:bound-sum-zk}
			\sum_{i=l+1}^{k}\normm{\xk-\xkm}&\leq \frac{\rho_2}{\rho_1}\varphi\BPa{\Psi(x_{l+1},x_{l})-\Psi(\wtilde{x^\star})} .
		\end{align}
		Applying this bound \eqref{eq:bound-sum-zk} with $l=0$ and combining  with \eqref{eq:local-conv-asump2} yields
		\begin{align*}
			\normm{\xkp-\xsol}&\leq \normm{x_0-\xsol}+ 2\sqrt{\frac{\Psi\pa{\wtilde{x_0}}-\Psi\pa{\wtilde{x^\star}}}{\rho_1}}+\frac{\rho_2}{\rho_1}\varphi\BPa{\Psi(\wtilde{x_0})-\Psi(\wtilde{x^\star})}<\rho, 
		\end{align*}
		which implies that $\xkp\in B(\xsol,\rho)$.
		
		If we start close enough to $\xsol$ so that \eqref{eq:local-conv-asump1}-\eqref{eq:local-conv-asump2} holds the sequence $\seq{\xk}$ will remain in the neighborhood $B(\xsol,\rho)$ and converges to a critical point, say $x^{\ast}$. Moreover $\Psi(\xk)\to\Phi(x^{\ast})\geq\Phi(\xsol)$. Let us assume that $\Phi(x^{\ast})>\Phi(\xsol)$. Since $\Phi$ has the KL property at $\xsol$ and thus
		\[
		\varphi'\Ppa{\Phi(x^\ast)-\Phi(\xsol)}\dist(0,\partial\Phi(x^\ast) \geq 1.
		\]
		This is a contradiction since $\varphi'(s)>0$ for $s\in]0,\eta[$ and $\dist(0,\partial\Phi(x^\ast))=0$ since $x^\ast$ is a critical point. We conclude that $x^\ast$ is indeed a global minimizer.
	\end{enumerate}
\end{proof}

\section{Proofs of Local Convergence}
At this juncture, we pause to present the following Lemma, which forms the essence of our framework. 
\begin{lemma}\label{lem:firm-nonexpansive}\textbf{(A Firmly nonexpansive map)} Let $G\in\Gamma_{0}(\bbR^n)$ and $\psi$ be a strongly convex function with full domain then the following map $J \eqdef\Ppa{\nabla\psi+\partial G}^{-1}$ and  $\Id-J$ are firmly non-expansive.  
\end{lemma}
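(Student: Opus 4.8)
The plan is to recognize $J=\Ppa{\nabla\psi+\partial G}^{-1}$ as the resolvent $J_A=(\Id+A)^{-1}$ of a suitably chosen maximally monotone operator $A$, and then to quote the classical characterizations of firmly nonexpansive maps. Set $h\eqdef\psi-\tfrac12\normm{\cdot}^2$. Since $\psi$ is $C^1$ with full domain and ($1$-)strongly convex, $h$ is a finite, convex, $C^1$ function on $\bbR^n$, so $\nabla h=\nabla\psi-\Id$ is single-valued, everywhere defined, continuous and monotone, hence maximally monotone. Thus $\nabla\psi+\partial G=\Id+A$ with $A\eqdef\nabla h+\partial G$.

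First I would check that $A$ is maximally monotone: it is the sum of the maximally monotone operator $\partial G$ (because $G\in\Gamma_0(\bbR^n)$) and the maximally monotone operator $\nabla h$ whose domain is all of $\bbR^n$, so the Rockafellar sum theorem applies without any qualification and $A$ is maximally monotone. Consequently $\Id+A$ is strongly monotone and maximally monotone, hence a bijection of $\bbR^n$ by Minty's theorem, so $J=J_A$ is single-valued with full domain (this also recovers the single-valuedness already noted after Premise~\ref{assump_B}). By the standard equivalence for resolvents, $J_A$ is firmly nonexpansive, i.e. $\normm{Jx-Jy}^2\le\pscal{Jx-Jy;x-y}$ for all $x,y$. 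If one prefers a self-contained argument: writing $u=Jx$, $v=Jy$ one has $x-\nabla\psi(u)\in\partial G(u)$ and $y-\nabla\psi(v)\in\partial G(v)$, and monotonicity of $\partial G$ gives $\pscal{x-y;u-v}\ge\pscal{\nabla\psi(u)-\nabla\psi(v);u-v}\ge\normm{u-v}^2$, where the last inequality uses strong convexity of $\psi$ with modulus at least one.

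For $\Id-J$, I would use the elementary fact that an operator $T$ is firmly nonexpansive if and only if its reflection $2T-\Id$ is nonexpansive. Since $2J-\Id$ is nonexpansive, so is $\Id-2J=-(2J-\Id)$, and $\Id-2J=2(\Id-J)-\Id$ is precisely the reflection of $\Id-J$; therefore $\Id-J$ is firmly nonexpansive. Equivalently one may invoke the resolvent decomposition $J_A+J_{A^{-1}}=\Id$ together with maximal monotonicity of $A^{-1}$, which gives $\Id-J=J_{A^{-1}}$ directly.

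The only point that deserves care is the strong-convexity modulus: the step $\pscal{\nabla\psi(u)-\nabla\psi(v);u-v}\ge\normm{u-v}^2$, and hence firm nonexpansiveness in the Euclidean norm, requires $\sigma_\psi\ge1$. For a general modulus $\sigma_\psi>0$ the same computation only yields $\sigma_\psi$-cocoercivity of $J$ (and of $\Id-J$), which is what is actually used downstream; in the phase-retrieval application of Section~\ref{sec:ibpg_numexp} the kernel \eqref{eq:entropy-n} has $\sigma_\psi=1$, so the statement holds verbatim. Apart from this normalization, the proof is purely a matter of assembling standard monotone-operator facts, so I do not expect any real obstacle.
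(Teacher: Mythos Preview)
Your proposal is correct, and in fact you give two arguments. Your alternative ``self-contained'' computation---$x-\nabla\psi(u)\in\partial G(u)$, $y-\nabla\psi(v)\in\partial G(v)$, then monotonicity of $\partial G$ followed by strong convexity of $\psi$---is essentially the paper's proof, which dresses the same inequality in Bregman-divergence notation: summing $D_G^{-\nabla\psi(p)+x}(q,p)\ge0$ and $D_G^{-\nabla\psi(q)+y}(p,q)\ge0$ gives $\pscal{Jx-Jy;x-y}\ge D_\psi(q,p)+D_\psi(p,q)$, and the paper then invokes strong convexity and a standard firm-nonexpansiveness characterization.

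Your primary route---writing $\nabla\psi+\partial G=\Id+A$ with $A=\nabla(\psi-\tfrac12\normm{\cdot}^2)+\partial G$ maximally monotone, so that $J=J_A$ is a resolvent---is a genuinely different organization. It buys you the $\Id-J$ claim for free via $\Id-J_A=J_{A^{-1}}$ (or the reflection criterion), whereas the paper simply appeals to the textbook equivalence for both conclusions at once. The paper's Bregman formulation is a bit more informative in that the intermediate inequality $\pscal{Jx-Jy;x-y}\ge D_\psi(q,p)+D_\psi(p,q)$ holds for any convex $\psi$ and displays the symmetrized Bregman distance explicitly; your resolvent route needs $\sigma_\psi\ge1$ already to make $A$ monotone.

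Your final remark about the modulus is well placed and applies equally to the paper: from $D_\psi(q,p)+D_\psi(p,q)\ge\sigma_\psi\normm{p-q}^2$ one only gets $\sigma_\psi$-cocoercivity unless $\sigma_\psi\ge1$, which the paper's phase-retrieval kernel indeed satisfies. The paper's proof does not flag this normalization, so your caveat is a useful addition rather than a defect of your argument.
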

Let us observe that if $\psi$ is not strongly convex but just convex, then from \eqref{eq:firm-nonexpensive} one has that $J$ is only monotone. 
\begin{proof}
	Let $x,y\in\bbR^n$ such that $p=J(x)$ and $q=J(y)$. Since $G\in\Gamma_{0}(\bbR^n)$ we have the following statements
	\[D^{-\nabla\psi(p)+x}_{G}(q,p)\geq0 \qandq D^{-\nabla\psi(q)+y}_{G}(p,q)\geq0.\]
	If we sum up both inequalities, we find that
	\begin{equation}\label{eq:firm-nonexpensive}
		D^{-\nabla\psi(p)+x}_{G}(q,p)+D^{-\nabla\psi(q)+y}_{G}(p,q)\geq0 \iff \pscal{Jx-Jy,x-y} \geq D_{\psi}(q,p)+D_{\psi}(p,q). 
	\end{equation}
	Therefore we get the desired result through strong convexity of $\psi$ and \cite[Proposition~4.4]{bauschke2011convex}. 
\end{proof}
\subsection{Proof of Lemma~\ref{lem:LemFinite}}\label{proof_finite}
\begin{proof}
	We have $0\in\partial\Phi(\xpa)$, since $G\in\PSF{\xpa}{\calM_{\xpa}}$ and $F\in C^1$ thanks to \cite[Corollay~4.7]{lewis_active_2002} (smooth perturbation of partly smooth functions), we have that $\Phi\in\PSF{\xpa}{\calM_{\xpa}}$. From the global convergence Theorem~\ref{thm:Global_conv_ABPG}, we have $\xk \xrightarrow{\Phi} \xpa$. Let us consider the iteration of Algorithm~\ref{alg:BPGBT} and define  
	\begin{align*}
		v_{k+1}=\frac{1}{\gak}\Ppa{-\nabla\psi(\xkp)+\nabla\psi(\yk)}+\nabla F(\xkp)-\nabla F(\yk),
	\end{align*}
	we have that $\forall k\in\bbN, v_{k}\in\partial \Phi(\xk)$. Besides,
	\begin{align*}
		\normm{v_{k+1}}\leq \frac{1}{\gak}\normm{\nabla \psi(\xkp)-\nabla\psi(\yk)}+\normm{\nabla F(\xkp)-\nabla F(\yk)}
	\end{align*}
	Since $\seq{\xk}$ is assumed to be bounded and so thus $\seq{\yk}$ by Lemma~\ref{lem:link-seq}, we deduce from Assumption~\ref{assump_B3} that there exists a positive scalar $M_1, M_2>0$ such that 
	\begin{align*}
		\normm{v_{k+1}}&\leq \Ppa{\frac{M_1}{\gak}+M_2}\normm{\xkp-\yk},\\
		&=\Ppa{\frac{M_1}{\gak}+M_2}\normm{\xkp-(1-\ak)(1-\akm)\xkm-(1-\ak)\akm\xk-\ak\xk},\\  
		&\leq \Ppa{\frac{M_1}{\gak}+M_2}\Ppa{\normm{\xkp-\xk}+(1-\ak)(1-\akm)\normm{\xk-\xkm}},\\
		&\leq \Ppa{\frac{M_1}{\gak}+M_2}\normm{\xkp-\xk}+ \Ppa{\frac{M_1}{\gak}+M_2}(1-\ak)(1-\akm)\normm{\xk-\xkm},  
	\end{align*}
	Therefore, we obtained that for $k\to \infty$ we get that $\ak\to a\in[\ua,\oa]$ implying that $\seq{\gak}$ converges. The latter combined with the fact that $\xk\to\xpa$, we deduce that $\normm{\vk}\to0$ as $k\to\infty$. We conclude that $\xk$ identifies $\calM_{\xpa}$ thanks to \cite[Proposition~10.12]{drusvyatskiy_optimality_2014}.
	
	\begin{enumerate}[label=(\roman*)]
		\item\label{pr-lemma-finite-i} If the active manifold $\calM_{\xsol}$ is an affine subspace, then $\calM_{\xsol} = \xsol + T_{\xsol}$ due to the normal sharpness property, and the claim follows immediately.
		
		\item\label{pr-lemma-finite-ii} When $G$ is locally polyhedral around $\xsol$, $\calM_{\xsol}$ is an affine subspace, and the identification of $\seq{\yk}$ and $\seq{\zk}$ follows from \ref{pr-lemma-finite-i}. For the rest, it is sufficient to observe that, by polyhedrality, for any $x$ in $\calM_{\xsol}$ near $\xsol$, $\partial G(x) = \partial G(\xsol)$, combining Fact~\ref{fct:normal_sharp} and Fact~\ref{fct:grad_psf}, we arrive at the second conclusion.
	\end{enumerate}
\end{proof}
The next Lemma gives the spectral properties of the matrices defined \eqref{eq:matrices-analysis}. 

\begin{lemma}\label{lem:spect_properysisty} Under  the Assumption~\ref{assump_A}-\ref{assump_B}, let $\xpa\in\gri{\Phi}\cap\nd{\Phi}$ such that $F$ is locally $C^2$ around $\xpa$. Then for any stepsize $\gamma\in\left]0,1/L\right]$,  we have
	\begin{enumerate}[label=(\roman*)]
		\item \label{lem:spect_property-i}   $H_{F}$ is symmetric positive definite  with eigenvalues in $\left]\gamma\sigma\sigma_{\psi}, L\Lma{\lambda}(\nabla^2\psi(\xpa))\gamma\right]$.
		\item \label{lem:spect_property-ii} $V$ has eigenvalues in 
		\[
		\left[\lambda_{\max}(\nabla^2\psi(\xpa))\Ppa{1-\gamma L},\lambda_{\max}(\nabla^2\psi(\xpa))-\gamma\sigma_{\psi}\sigma\right[,
		\]
		hence $H_{\psi}^{-1}V$ has eigenvalues in 
		\[\left[1-\gamma L\frac{\lambda_{\max}(\nabla^2\psi(\xpa))}{\sigma_{\psi}},1-\gamma\sigma\frac{\sigma_{\psi}}{\lambda_{\max}(\nabla^2\psi(\xpa))}\right[.
		\]
		\item\label{lem:spect_property-iii} $WH_{\psi}$ has eigenvalues in     $]0,q_\psi(\xpa)]$. 
		\item \label{lem:spect_property-iv} If either  $\xpa\in \nd{\Psi}$ or $\calM_{\xpa}$ is  affine then  $WV$ has    eigenvalues in $]-\Lambda,\Lambda[$ where we recall that 
		$\Lambda= \left|q_\psi(\xpa)-\gamma\sigma\right|$ with $q_{\psi}(\xpa)=\frac{\lambda_{\max}(\nabla^2\psi(\xpa))}{\sigma_{\psi}}$.
	\end{enumerate}
\end{lemma}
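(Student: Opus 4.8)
The plan is to reduce all four items to elementary spectral estimates on the tangent space $T_\xpa$, since every matrix in \eqref{eq:matrices-analysis} annihilates $T_\xpa^\perp$; I would state once at the outset that all eigenvalue assertions refer to the restrictions of these operators to $T_\xpa$. Everything then flows from two operator inequalities on $T_\xpa$. First, $\sigma_\psi$-strong convexity of $\psi$ gives $\sigma_\psi\Id\lon\nabla^2\psi(\xpa)$, and Courant--Fischer (interlacing under the compression $\proj{T_\xpa}(\cdot)\proj{T_\xpa}$) upgrades this to $\sigma_\psi\Id\lon H_\psi\lon\lambda_{\max}(\nabla^2\psi(\xpa))\Id$ on $T_\xpa$. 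Second, $L$-relative smoothness of $F$ is convexity of $L\psi-F$, which by $C^2$-ness means $\nabla^2F(\xpa)\lon L\nabla^2\psi(\xpa)$, while restricted injectivity \eqref{GRI} combined with strong convexity of $\psi$ gives $\sigma\sigma_\psi\Id\lon\sigma\nabla^2\psi(\xpa)\lon\nabla^2F(\xpa)$ (the last inequality on $T_\xpa$). Compressing to $T_\xpa$ and scaling by $\gamma$ yields $\gamma\sigma\sigma_\psi\Id\lon H_F\lon\gamma L\lambda_{\max}(\nabla^2\psi(\xpa))\Id$ on $T_\xpa$, which is exactly \ref{lem:spect_property-i} (positivity being immediate since $\gamma\sigma\sigma_\psi>0$).

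\textbf{Items \ref{lem:spect_property-ii} and \ref{lem:spect_property-iii}.} For \ref{lem:spect_property-ii} write $V=H_\psi-H_F$: compressing and scaling $\nabla^2F(\xpa)\lon L\nabla^2\psi(\xpa)$ gives $H_F\lon\gamma LH_\psi$, hence $0\lon(1-\gamma L)H_\psi\lon V$ since $\gamma\le 1/L$, which bounds the spectrum of $V$ from below in terms of that of $H_\psi$; the upper bound follows from $\pscal{h,Vh}=\pscal{h,H_\psi h}-\pscal{h,H_Fh}\le(\lambda_{\max}(\nabla^2\psi(\xpa))-\gamma\sigma\sigma_\psi)\normm{h}^2$ for $h\in T_\xpa$. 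For $H_\psi^{-1}V=\Id-H_\psi^{-1}H_F$ I would not route through the spectrum of $V$, but use the elementary fact that for symmetric positive definite $A,B$ the matrix $A^{-1}B$ is similar to the symmetric $A^{-1/2}BA^{-1/2}$, so has real spectrum contained in $[\lambda_{\min}(B)/\lambda_{\max}(A),\lambda_{\max}(B)/\lambda_{\min}(A)]$; applying this with $A=H_\psi$, $B=H_F$ and feeding in \ref{lem:spect_property-i} and the bounds on $H_\psi$ puts the spectrum of $H_\psi^{-1}H_F$ in $[\gamma\sigma\sigma_\psi/\lambda_{\max}(\nabla^2\psi(\xpa)),\gamma L\lambda_{\max}(\nabla^2\psi(\xpa))/\sigma_\psi]$, whence the stated interval for $\Id-H_\psi^{-1}H_F$. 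Item \ref{lem:spect_property-iii} is analogous: Remark~\ref{rmk:positive-U} makes $U$ symmetric positive semidefinite under the present hypothesis, so $H_\psi\lon H_\psi+U$ and $W=(H_\psi+U)^{-1}$ is positive definite with $\lambda_{\max}(W)\le\sigma_\psi^{-1}$; then $WH_\psi$, similar to the positive definite $W^{1/2}H_\psi W^{1/2}$, has positive spectrum, and $\lambda_{\max}(WH_\psi)\le\lambda_{\max}(W)\lambda_{\max}(H_\psi)\le\lambda_{\max}(\nabla^2\psi(\xpa))/\sigma_\psi=q_\psi(\xpa)$.

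\textbf{Item \ref{lem:spect_property-iv}.} The hypothesis ($\xpa\in\nd{\Phi}$, or $\calM_\xpa$ affine) enters only through Remark~\ref{rmk:positive-U}, which makes $U$ symmetric positive semidefinite. The eigenvalues of $WV=(H_\psi+U)^{-1}V$ (real, by the similarity argument above) are the generalized Rayleigh quotients $\pscal{h,Vh}/\pscal{h,(H_\psi+U)h}$ over $h\in T_\xpa$. From \ref{lem:spect_property-ii} one has $0\lon V\lon(\lambda_{\max}(\nabla^2\psi(\xpa))-\gamma\sigma\sigma_\psi)\Id$ on $T_\xpa$, while $\sigma_\psi\Id\lon H_\psi\lon H_\psi+U$; hence the quotient lies in $[0,(\lambda_{\max}(\nabla^2\psi(\xpa))-\gamma\sigma\sigma_\psi)/\sigma_\psi]=[0,q_\psi(\xpa)-\gamma\sigma]$. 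Since $q_\psi(\xpa)\ge1$ and $\gamma\sigma\le\gamma L\le1$, we get $q_\psi(\xpa)-\gamma\sigma\ge0$, so $\Lambda=q_\psi(\xpa)-\gamma\sigma$ and the spectrum of $WV$ lies in $[-\Lambda,\Lambda]$. A useful byproduct, read off the identity $WV=\Id-W(H_F+U)$ with $H_F+U=\gamma\nabla^2_{\calM_\xpa}\Phi(\xpa)\proj{T_\xpa}$ positive definite on $T_\xpa$, is that in fact the spectrum of $WV$ also lies in $[0,1)$.

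\textbf{Where the difficulty sits.} The routine content above yields \emph{closed} intervals; the genuine work is producing the half-open endpoints of the statement, namely the strict bound $\lambda_{\min}(H_F)>\gamma\sigma\sigma_\psi$ in \ref{lem:spect_property-i} --- equivalently, strict positive definiteness of $\proj{T_\xpa}\nabla^2F(\xpa)\proj{T_\xpa}$ on $T_\xpa$, not mere semidefiniteness --- and the strict versions of the $V$, $H_\psi^{-1}V$, $WV$ bounds it induces. Here I would invoke that $\xpa\in\gri{\Phi}$ is equivalent, via \cite[Theorem~4.16]{drusvyatskiy_generic_2016} and strong convexity of $\psi$ (the remark after \eqref{GRI}), to $\xpa$ being a stable strong local minimizer of $\Phi$, which forces $\ker\nabla^2F(\xpa)\cap T_\xpa=\{0\}$ and hence the strict inequality; the remaining strict estimates then propagate by tracking the equality cases in the Rayleigh quotients (the extremal vectors for $V$ and for $H_\psi$ can coincide only in the degenerate case $\lambda_{\max}(\nabla^2\psi(\xpa))=\sigma_\psi$ on $T_\xpa$, which is treated separately or excluded by taking $\gamma<1/L$). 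A secondary, bookkeeping-level nuisance is to keep consistent throughout the distinction between the spectrum of these operators on $\bbR^n$ --- where several are only positive \emph{semi}definite, with $T_\xpa^\perp$ in the kernel --- and their spectrum on $T_\xpa$.
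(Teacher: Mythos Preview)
Your approach is essentially the same as the paper's: both arguments reduce everything to the two master inequalities $\sigma\nabla^2\psi(\xpa)\lon\nabla^2F(\xpa)\lon L\nabla^2\psi(\xpa)$ on $T_\xpa$ (from \eqref{GRI} and relative smoothness) together with $\sigma_\psi\Id\lon\nabla^2\psi(\xpa)$, and both handle $H_\psi^{-1}V$ and $WV$ by passing to a similar symmetric matrix. The only real differences are in execution. For the lower bound in \ref{lem:spect_property-ii} you argue via $H_F\lon\gamma LH_\psi$ so $(1-\gamma L)H_\psi\lon V$, which is cleaner than the paper's terse ``eigenvalues of the difference of two positive definite matrices''. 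For \ref{lem:spect_property-iv} the paper bounds $\normm{W^{1/2}VW^{1/2}}\le\normm{W^{1/2}}^2\normm{V}$ by submultiplicativity, whereas you use the generalized Rayleigh quotient $\pscal{h,Vh}/\pscal{h,(H_\psi+U)h}$; both give the same bound, and your route has the bonus of the one-sided estimate $[0,\Lambda]$ you note at the end.

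Your ``where the difficulty sits'' paragraph is largely unnecessary worry. The paper's own proof does not establish the strict versus non-strict endpoints either: it derives $\lambda_{\min}(H_F)\ge\gamma\sigma\sigma_\psi$ from \eqref{GRI} and leaves it there, and the half-open brackets in the statement are not revisited. Your proposed route to strictness via $\ker\nabla^2F(\xpa)\cap T_\xpa=\{0\}$ would in any case only give $\lambda_{\min}(H_F)>0$, not $\lambda_{\min}(H_F)>\gamma\sigma\sigma_\psi$, so it does not close the gap you identify. In short: the half-open intervals appear to be a cosmetic artifact of the statement rather than something the proof is expected to deliver, and you should not spend effort on them.
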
 
\begin{proof}{\ }
	\begin{enumerate}[label=(\roman*)]
		\item Combining the fact that $F$ is locally $C^2$ and $\xpa\in\gri{\Psi}$ we get that $\exists\sig>0,\forall h\in T_{\xpa},$ 
		\begin{align*}
			\pscal{h;\nabla^2F(\xpa)h}\geq \pscal{h;\sig\nabla^2\psi(\xpa)h}\geq\sig\sig_{\psi}\normm{h}^2                                          
		\end{align*}
		where the last part comes from the strong convexity of $\psi$. This implies that $\Smi{\lambda}(H_{F})\geq \gamma\sig\sig_{\psi}$. 
		Since $F$ is $L-$smooth relative to $\psi$, $\forall h\in T_{\xpa},$ 
		\begin{align*}
			\pscal{h;\nabla^2F(\xpa)h}&\leq L\pscal{h;\nabla^2\psi(\xpa)h}\leq L\Lma{\lambda}(\nabla^2\psi(\xpa))\normm{h}^2.                         
		\end{align*}    
		
		\item We have that $V=H_{\psi}-H_F$ and that $H_\psi$ has eigenvalues in $[\sigma_{\psi},\lambda_{\max}(\nabla^2\psi(\xpa))]$. We combine this with the previous claim on $H_F$ and the eigenvalues of the difference of two positive definite matrices. 
		
		For the second  claim, let us observe that $H_{\psi}^{-1}V$ is similar to the following matrice $\Id-H_{\psi}^{-1/2}H_FH_{\psi}^{-1/2}$. Besides, we have that $H_{\psi}^{-1/2}H_FH_{\psi}^{-1/2}$ has eigenvalues in 
		$
		\left[\gamma\sigma\frac{\sigma_{\psi}}{\lambda_{\max}(\nabla^2\psi(\xpa))},\gamma L\frac{\lambda_{\max}(\nabla^2\psi(\xpa))}{\sigma_{\psi}}\right[
		$ thus, the difference of two positive definite matrices.
		
		\item  From Remark~\ref{rmk:positive-U}, we have that $W$ has eigenvalues in  $\left]0,1/\sigma_{\psi}\right]$ which implies  that $WH_\psi$ has eigenvalues in $\left]0,\frac{\lambda_{\max}(\nabla^2\psi(\xpa))}{\sigma_{\psi}}\right]$. 
		
		\item       Let us observe that $WV=W^{1/2}\Ppa{W^{1/2}VW^{1/2}}W^{-1/2}$, thus  $WV$ is  similar to $W^{1/2}VW^{1/2}$. We have 
		\begin{align*}
			\normm{W^{1/2}VW^{1/2}}&\leq \normm{W^{1/2}}\normm{V}\normm{W^{1/2}}\leq\left|\lambda_{\max}(\nabla^2\psi(\xpa))-\gamma\sigma_{\psi}\sigma\right| \normm{W^{1/2}}^2.\\ 
			&=\left|\frac{\lambda_{\max}(\nabla^2\psi(\xpa))}{\sigma_{\psi}}-\gamma\sigma\right|, 
		\end{align*}
		where we used the fact that either $\xpa\in \nd{\Psi}$ or $\calM_{\xpa}$ is affine holds then from Remark~\ref{rmk:positive-U} we have $\normm{W}\leq\frac{1}{\sigma_{\psi}}$. 
	\end{enumerate}
\end{proof}

Let  us define the following matrices \ie, 
\begin{align}\label{eq:matrices-dependent}
	&H^k_{F}\eqdef\gak\proj{T_{\xpa}}\nabla^2F(\xpa)\proj{T_{\xpa}},\quad V^k\eqdef H_{\psi} - H^k_{F},&U^k\eqdef\gak \nabla^2_{\calM_{\xpa}}\Phi(\xpa)\proj{T_{\xpa}}-H_F.
\end{align}
To enhance readability, we introduce simplified notation for any $k\in\bbN$,  
\begin{flalign}\label{eq:notation-matr}
	&b_k=(1-\ak)\akm+\ak \qandq c_k=(1-\ak)(1-\akm),\\
	&b=2a-a^2\qandq c=(1-a)^2.
\end{flalign}
From this notation, we have this obvious Lemma. 
\begin{lemma}\label{lem:coeff-seq-conv}Let us  consider the sequences define in \eqref{eq:notation-matr}, If $\ak\to a$ as $k\to\infty$ then we have $b_k\to b$ and $c_k\to c$.    
\end{lemma}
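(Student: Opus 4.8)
The plan is to simply invoke the algebra of limits for real sequences, since $b_k,c_k$ are fixed polynomial expressions in $\ak$ and $\akm$. The one preliminary observation I would record is that if $\ak\to a$ as $k\to\infty$, then the shifted sequence $\seq{\akm}$ also converges to $a$: shifting the index by one (equivalently, discarding the first term) does not change the limit of a convergent sequence.

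With both $\ak\to a$ and $\akm\to a$ in hand, I would then use continuity of addition and multiplication, i.e.\ the fact that sums and products of convergent real sequences converge to the sum and product of the respective limits. Concretely, $1-\ak\to 1-a$ and $1-\akm\to 1-a$, hence
\[
c_k=(1-\ak)(1-\akm)\longrightarrow (1-a)(1-a)=(1-a)^2=c,
\]
and likewise
\[
b_k=(1-\ak)\akm+\ak\longrightarrow (1-a)a+a=2a-a^2=b,
\]
which is exactly the claim.

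There is essentially no obstacle here: the statement is an elementary consequence of the continuity of the arithmetic operations on $\bbR$, and the only point that even deserves to be mentioned is that the index shift $k\mapsto k-1$ preserves convergence. I would therefore keep the written proof to one or two lines, as it serves only to fix the notation used later in the linearization analysis.
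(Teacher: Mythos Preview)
Your proof is correct; this is exactly the elementary limit-algebra argument one expects, and the paper itself simply labels the lemma as ``obvious'' without writing out a proof.
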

Following the work of \cite[Section~B]{liang_activity_2017}, we also define the matrices
\begin{flalign*}%\label{eq:Mk1}
	& M^k_1=\begin{bmatrix}
		bW(V^k-V),&cW(V^k-V)
	\end{bmatrix},\quad M^k_2=\begin{bmatrix}
		\pa{b_k-b}WV^k,&(c_k-c)WV^k
	\end{bmatrix}. 
\end{flalign*}

Therefore, we have the following proposition. 
\begin{proposition}\label{pro:estimation} Under the same assumptions as Proposition~\ref{pro:local-linearization}, for $k$ large enough we have  
	\begin{align}
		& \normm{\yk-\xpa}=O(\normm{\dk}),\normm{\rkp}=O(\normm{\dk}), \normm{\xkp-\yk}=O(\normm{\dk}),\label{eq:boundness-line1}\\
		&\normm{\nabla F(\yk) -F(\xkp)}=O\Ppa{\normm{\dk}},\quad\normm{\nabla\psi(\yk)-\nabla\psi(\xkp)}=O\Ppa{\normm{\dk}}, \label{eq:boundness-line2} 
	\end{align}
	and 
	\begin{equation}\label{eq:boundness-line3}
		\normm{W(U^k-U)(\xkp-\xpa)}=o(\normm{\dk}), \normm{M_1^k\dk}=o\pa{\normm{\dk}}, \normm{M_2^k\dk}=o\pa{\normm{\dk}}.   
	\end{equation}
\end{proposition}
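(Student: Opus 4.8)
The plan is to obtain every estimate by combining the uniform regularity of the data on a fixed bounded set with the convergences $\ak\to a$, hence $\gak\to\gamma$, and $b_k\to b$, $c_k\to c$ (the latter two by Lemma~\ref{lem:coeff-seq-conv}). Throughout I fix a bounded convex set $\Omega$ containing $\xpa$ together with the tails of $\seq{\xk}$, $\seq{\yk}$, $\seq{\zk}$, which are bounded by Lemma~\ref{lem:link-seq}. On $\Omega$, Assumption~\ref{assump_B3} supplies Lipschitz constants $L_F$ and $L_\psi$ for $\nabla F$ and $\nabla\psi$, and $\gak\in\,]0,1/L]$ is uniformly bounded.

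\emph{First and second lines.} Unrolling the extrapolation step gives $\yk-\xpa=b_k r_k+c_k r_{k-1}$ with $b_k,c_k\in[0,1]$, so $\normm{\yk-\xpa}\le(|b_k|+|c_k|)\normm{\dk}=O(\normm{\dk})$. For $\rkp$ I use that $\xpa$ is a fixed point of the $D$-prox step: since $\xpa\in\crit{\Phi}$ we have $-\nabla F(\xpa)\in\partial G(\xpa)$, hence $\xpa=\Ppa{\nabla\psi+\gak\partial G}^{-1}\Ppa{\nabla\psi(\xpa)-\gak\nabla F(\xpa)}$. Applying Lemma~\ref{lem:firm-nonexpansive} with $\gak G\in\Gamma_0(\bbR^n)$ in place of $G$, strong convexity of $\psi$ makes $\Ppa{\nabla\psi+\gak\partial G}^{-1}$ firmly nonexpansive, hence $1/\sigma_\psi$-Lipschitz (Cauchy--Schwarz), so $\normm{\rkp}\le\frac{1}{\sigma_\psi}\normm{\Ppa{\nabla\psi(\yk)-\gak\nabla F(\yk)}-\Ppa{\nabla\psi(\xpa)-\gak\nabla F(\xpa)}}\le\frac{L_\psi+\gak L_F}{\sigma_\psi}\normm{\yk-\xpa}=O(\normm{\dk})$. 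Consequently $\normm{\xkp-\yk}\le\normm{\rkp}+\normm{\yk-\xpa}=O(\normm{\dk})$, and the second line follows at once from Lipschitz continuity on $\Omega$: $\normm{\nabla F(\yk)-\nabla F(\xkp)}\le L_F\normm{\xkp-\yk}=O(\normm{\dk})$, and likewise for $\nabla\psi$ with $L_\psi$.

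\emph{Third line.} Here I isolate a vanishing scalar in each matrix difference. Directly from \eqref{eq:matrices-analysis}--\eqref{eq:matrices-dependent} one has $V^k-V=(\gamma-\gak)\proj{T_{\xpa}}\nabla^2F(\xpa)\proj{T_{\xpa}}$ and $U^k-U=(\gak-\gamma)\nabla^2_{\calM_{\xpa}}\Phi(\xpa)\proj{T_{\xpa}}$, so $\normm{V^k-V}=O(|\gak-\gamma|)$ and $\normm{U^k-U}=O(|\gak-\gamma|)$; since $\gak=\ak^{\kappa-1}/L$ and $\ak\to a$, both are $o(1)$. Using $\normm{W}\le 1/\sigma_\psi$ (Remark~\ref{rmk:positive-U}) and the first-line bound $\normm{\xkp-\xpa}=O(\normm{\dk})$ yields $\normm{W(U^k-U)(\xkp-\xpa)}=o(1)\cdot O(\normm{\dk})=o(\normm{\dk})$, and similarly $\normm{M_1^k\dk}\le(|b|+|c|)\normm{W}\normm{V^k-V}\normm{\dk}=o(\normm{\dk})$. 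For $M_2^k$, $\normm{WV^k}$ is uniformly bounded ($H_F^k$ is bounded because $\gak$ is), while $|b_k-b|\to0$ and $|c_k-c|\to0$ by Lemma~\ref{lem:coeff-seq-conv}, whence $\normm{M_2^k\dk}=o(\normm{\dk})$.

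The argument is mostly bookkeeping once $\Omega$ is fixed; the single structural ingredient is the fixed-point identity combined with the Lipschitz property of the Bregman resolvent from Lemma~\ref{lem:firm-nonexpansive}, which is what turns the $O(\normm{\yk-\xpa})$ control of $\rkp$ into the required $O(\normm{\dk})$. The point needing care is that the third-line bounds are genuinely little-$o$ and not merely $O$: this hinges entirely on the convergences $\gak\to\gamma$, $b_k\to b$, $c_k\to c$, i.e. on the hypothesis $\ak\to a$ inherited from Proposition~\ref{pro:local-linearization}, which is precisely why the statement is asserted only for $k$ large enough.
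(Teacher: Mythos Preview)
Your proof is correct and follows essentially the same route as the paper: the same unrolling $\yk-\xpa=b_k r_k+c_k r_{k-1}$, the same fixed-point identity combined with Lemma~\ref{lem:firm-nonexpansive} to bound $\rkp$, Lipschitz continuity on bounded sets for the gradient terms, and the identical factorizations $U^k-U=(\gak-\gamma)\nabla^2_{\calM_{\xpa}}\Phi(\xpa)\proj{T_{\xpa}}$ and $V^k-V=(\gamma-\gak)\proj{T_{\xpa}}\nabla^2 F(\xpa)\proj{T_{\xpa}}$ to extract the $o(1)$ factor. If anything, you are slightly more careful than the paper in making explicit the $1/\sigma_\psi$-Lipschitz constant of the resolvent and in spelling out the triangle-inequality step for $\normm{\xkp-\yk}$.
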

\begin{proof}$~$
	\begin{itemize}
		\item From the definition of the sequences we have, 
		\begin{flalign*}
			\normm{\yk-\xpa}=&\normm{(1-\ak)\zk+\ak\xk-\xpa},\\
			=&\normm{(1-\ak)(zk-\xpa)+\ak(\xk-\xpa)},\\
			=&\normm{(1-\ak)(1-\akm)\rkm+\Ppa{(1-\ak)\akm+\ak}\rk},\\
			\leq& (1-\ak)(1-\akm)\Ppa{\normm{\rkm}+\normm{\rk}},\\
			\leq&\sqrt{2}\normm{\dk}. 	
		\end{flalign*} 
		
		\item We recall that $\rkp=\xkp-\xpa$ thus,
		\begin{flalign*}\label{eq:your_equation_label}
			\begin{split}
				\normm{\rkp}&=\normm{\pa{\nabla\psi+\gak\partial G}^{-1}\pa{\nabla\psi(\yk)-\gak\nabla F(\yk)}-\pa{\nabla\psi+\gak\partial G}^{-1}\pa{\nabla\psi(\xpa)-\gak\nabla F(\xpa)}},\\
				&\leq \normm{\nabla\psi(\yk)-\nabla\psi(\xpa)}+\gak\normm{\nabla F(\yk)-\nabla F(\xpa)},\\
				&\leq \Ppa{M_{1}+M_2\gak}\normm{\yk-\xpa},\\
				&\leq \Ppa{M_{1}+M_2\gak}\sqrt{2}\normm{\dk},
			\end{split}
		\end{flalign*}
		where we used the non-expansiveness of the mapping $(\nabla\psi+\gak\partial G)^{-1}$ (see Lemma~\ref{lem:firm-nonexpansive}), the boundedness of the sequence and again Assumption~\ref{assump_A3}.
		
		\item From Assumption~\ref{assump_A3}, there exists $M_1>0$ large enough such that
		\begin{flalign*}
			\normm{\nabla\psi(\yk)-\nabla\psi(\xkp)}\leq M_1 \normm{\xkp-\yk}=O(\normm{\dk}).  
		\end{flalign*}
		Similarly, there exists $M_2$ large enough such that
		\begin{align*}
			\normm{\nabla F(\yk)-\nabla F(\xkp)}\leq M_2\normm{\yk-\xkp}\leq M_2\sqrt{2}\normm{\dk}=O(\normm{\dk}). 
		\end{align*}

		\item Let us now turn to the proof of \eqref{eq:boundness-line3}, from the definition of $\Phi$, we have that
		\begin{flalign}
			& \lim_{k \to \infty}\frac{\normm{W(U^k - U)\rkp}}{\normm{\rkp}}\leq \lim_{k \to \infty} |\gak - \gamma| \normm{W}\normm{\nabla_{\calM_{\xpa}}\Phi(\xpa)\proj{T_{\xpa}}}=0,
		\end{flalign}
		since $\gak\to\gamma$ which means that $\normm{W(U^k - U)\rkp}=o(\normm{\rkp})=o(\normm{\dk})$, where we have used \eqref{eq:boundness-line1}.  
		
		\item We have that,  
		\begin{align*}
			\lim_{k \to \infty} \frac{\normm{M^k_1\dk}}{\normm{\dk}}&=\lim_{k\to\infty}\frac{\normm{ bW(V^k-V)\rk+cW(V^k-V)\rkm}}{\normm{\dk}} \\
			&\leq\lim_{k\to\infty}\frac{\max\pa{\abs{b},\abs{c}}\normm{W}\normm{V^k-V}(\normm{\rk}+\normm{\rkp})}{\normm{\dk}} \\
			&\leq \lim_{k \to \infty}\frac{\max\pa{\abs{b},\abs{c}}\normm{W}|\gak-\gamma|\normm{\proj{T_{\xpa}}\nabla^2F(\xpa)\proj{T_{\xpa}}}\sqrt{2}\normm{\dk}}{\normm{\dk}} \\
			&=\lim_{k\to\infty}\sqrt{2}|\gak-\gamma|\max\pa{\abs{b},\abs{c}}\normm{W}\normm{\proj{T_{\xpa}}\nabla^2F(\xpa)\proj{T_{\xpa}}}=0,  
		\end{align*}
		as the term $\max\pa{\abs{b},\abs{c}}\normm{W}\normm{\proj{T_{\xpa}}\nabla^2F(\xpa)\proj{T_{\xpa}}}$ is bounded  since $\normm{W}\normm{\proj{T_{\xpa}}\nabla^2F(\xpa)\proj{T_{\xpa}}}\leq L\lambda_{\max}(\nabla^2\psi(\xpa))^{-1}\normm{H_{\psi}}\leq1$ and $\max\pa{\abs{b},\abs{c}}\leq 1$.

		\item To finish the proof, we have
		\begin{align*}
			\lim_{k \to \infty} \frac{\normm{M^k_2\dk}}{\normm{\dk}}&=\lim_{k\to\infty}\frac{\normm{\pa{b_k-b}WV^k\rk+(c_k-c)WV^k\rkm}}{\normm{\dk}},\\
			&\leq\lim_{k\to\infty}\frac{\max\Ppa{\abs{b_k-b},\abs{c_k-c}}\normm{W}\normm{V^k}\sqrt{2}\normm{\dk}}{\normm{\dk}},\\
			&\leq\lim_{k\to\infty} \max\Ppa{\abs{b_k-b},\abs{c_k-c}}\normm{W}\normm{V^k}\sqrt{2}=0,
		\end{align*}
		where  we use \eqref{eq:boundness-line1}, the fact that $\normm{V^k}\to\normm{V}<\infty$,  the  boundedness of $\normm{W}$ combined with Lemma~\ref{lem:coeff-seq-conv} to get the result.
	\end{itemize}
	
\end{proof}

\subsection{Proof of Proposition~\ref{pro:local-linearization}}\label{proof:local-linearization}
\begin{proof}$~$
	\newline 
	Since  $\seq{\zk}$ is a sequence generated by Algorithm~\ref{alg:BPGBT} converging to $\xpa\in\crit{\Phi}$. From the finite identification Lemma~\ref{lem:LemFinite},  there exists  $K\in\bbN$ such that $\xk$ is close enough to $\xpa$ for $k\geq K$. Let $T_{\xkp},T_{\xpa}$ be their corresponding tangent spaces, and define $\tau_{k+1}:T_{\xpa}\rightarrow T_{\xkp}$ the parallel translation along the unique geodesic joining $\xkp$ to $\xpa$. From the definition of the point $\xkp$ and the  fact that $\xpa\in\crit{\Phi}$ we have 
	\begin{eqnarray*}
		\nabla\psi(\yk)-\nabla\psi(\xkp)-\gak\Ppa{\nabla F(\yk)-\nabla F(\xkp)}\in\gak\partial\Phi(\xkp)&\\
		0\in\gak\partial\Phi(\xpa).&
	\end{eqnarray*}
	We project now this inclusions over the tangents spaces $T_{\xkp}$ and $T_{\xpa}$ respectively  to get that
	\begin{eqnarray*}
		\tau_{k+1}^{-1}\proj{T_{\xkp}}\BPa{\nabla\psi(\yk)-\nabla\psi(\xkp)-\gak\Ppa{\nabla F(\yk)-\nabla F(\xkp)}}=&\gak \tau^{-1}_{k+1}\nabla_{\calM_{\xpa}}\Phi(\xkp)\\
		0=&\gak\nabla_{\calM_{\xpa}}\Phi(\xpa)
	\end{eqnarray*}
	where we have used the Fact~\ref{fct:grad_psf}. After summing both lines  and subtracting the value $\tau_{k+1}^{-1}\proj{T_{\xkp}}\nabla\psi(\xpa)$ we have
	\begin{equation}\label{eq:projTxpa}
		\begin{aligned}
			&\gak\Ppa{\tau^{-1}_{k+1}\nabla_{\calM_{\xpa}}\Phi(\xkp) - \nabla_{\calM_{\xpa}}\Phi(\xpa)} \\
			&= \tau_{k+1}^{-1}\proj{T_{\xkp}}\Ppa{\nabla\psi(\yk) - \nabla\psi(\xkp)} - \gak\tau_{k+1}^{-1}\proj{T_{\xkp}}\Ppa{\nabla F(\yk) - \nabla F(\xkp)}.
		\end{aligned}
	\end{equation}
	We  combined Lemma~\ref{lem: Tgtxk-to-Tgtx}, \eqref{eq:boundness-line2} and Lemma~\ref{lem: Riem-Tayl} due to the local $C^2$-smoothness of $\psi$  to obtain that
	\begin{flalign*}\label{eq:psi-xkp-xpa}
		& \tau_{k+1}^{-1}\proj{T_{\xkp}}\Ppa{\nabla\psi(\yk)-\nabla\psi(\xkp)} \\
		&= \proj{T_{\xpa}}\Ppa{\nabla\psi(\yk)-\nabla\psi(\xkp)}+o(\normm{\xkp-\yk}), \\
		&= \proj{T_{\xpa}}\Ppa{\nabla\psi(\yk)-\nabla\psi(\xpa)}-\proj{T_{\xpa}}\Ppa{\nabla\psi(\xkp)-\nabla\psi(\xpa)}+o(\normm{\dk}),\\
		&=\proj{T_{\xpa}}\nabla^2\psi(\xpa)(\yk-\xpa)+o(\normm{\yk-\xpa})-\proj{T_{\xpa}}\nabla^2\psi(\xpa)(\xkp-\xpa)+o(\normm{\rkp})+o(\normm{\dk}),\\
		&= \proj{T_{\xpa}}\nabla^2\psi(\xpa)(\yk-\xpa)
		-\proj{T_{\xpa}}\nabla^2\psi(\xpa)(\xkp-\xpa)+o(\normm{\dk}),\\
		&= \proj{T_{\xpa}}\nabla^2\psi(\xpa)\proj{T_{\xpa}}(\yk-\xpa)
		-\proj{T_{\xpa}}\nabla^2\psi(\xpa)\proj{T_{\xpa}}(\xkp-\xpa)+o(\normm{\dk}),  \numberthis
	\end{flalign*}
	where we have used \eqref{eq:boundness-line2}, \cite[Lemma~5.1]{liang_activity_2015} and the fact that $r_{k+1}=O(\normm{\dk})$.
	Using similar arguments, we have
	\begin{flalign*}\label{eq:gradf-yk-xkp}
		& \tau_{k+1}^{-1}\proj{T_{\xkp}}\Ppa{\nabla F(\yk)-\nabla F(\xkp)} \\
		&= \proj{T_{\xpa}}\Ppa{\nabla F(\yk)-\nabla F(\xkp)}+o(\normm{\dk}), \\
		&= \proj{T_{\xpa}}\Ppa{\nabla F(\yk)-\nabla F(\xpa)}-\proj{T_{\xpa}}\Ppa{\nabla F(\xkp)-\nabla\psi(\xpa)}+o(\normm{\dk}),\\
		&=\proj{T_{\xpa}}\nabla^2F(\xpa)(\yk-\xpa)+o(\normm{\yk-\xpa})-\proj{T_{\xpa}}\nabla^2F(\xpa)(\xkp-\xpa)+o(\normm{\xkp-\xpa})+o(\normm{\dk}),\\
		&= \proj{T_{\xpa}}\nabla^2F(\xpa)(\yk-\xpa)
		-\proj{T_{\xpa}}\nabla^2F(\xpa)(\xkp-\xpa)+o(\normm{\dk}),\\
		&= \proj{T_{\xpa}}\nabla^2F(\xpa)\proj{T_{\xpa}}(\yk-\xpa)
		-\proj{T_{\xpa}}\nabla^2F(\xpa)\proj{T_{\xpa}}(\xkp-\xpa)+o(\normm{\dk}).
		\numberthis
	\end{flalign*}
	Moreover, we have
	\begin{equation}\label{eq:taylor_Psi}
		\tau^{-1}_{k+1}\nabla_{\calM_{\xpa}}\Phi(\xkp)-\nabla_{\calM_{\xpa}}\Phi(\xpa)=\nabla^2_{\calM_{\xpa}}\Phi(\xpa)\proj{T_{\xpa}}\Ppa{\xkp-\xpa} +o(\normm{\dk}).
	\end{equation}
	We replace now the expressions \eqref{eq:psi-xkp-xpa}, \eqref{eq:gradf-yk-xkp}, \eqref{eq:taylor_Psi}, in \eqref{eq:projTxpa}, we obtain
	\begin{equation}\label{eq:replacing-result}
		\begin{aligned}
			&\BPa{\proj{T_{\xpa}}\nabla^2\psi(\xpa)\proj{T_{\xpa}}+\gak\nabla^2_{\calM_{\xpa}}\Phi(\xpa)\proj{T_{\xpa}}-\gak\proj{T_{\xpa}}\nabla^2F(\xpa)\proj{T_{\xpa}}}\pa{\xkp-\xpa}= \pa{H_{\psi}+U^k}\pa{\xkp-\xpa}\\
			&=\proj{T_{\xpa}}\nabla^2\psi(\xpa)\proj{T_{\xpa}}(\yk-\xpa) -\gak\proj{T_{\xpa}}\nabla^2F(\xpa)\proj{T_{\xpa}}(\yk-\xpa)+o(\normm{\dk}). \\
		\end{aligned}
	\end{equation}
	We get by factorizing and replacing \eqref{eq:matrices-dependent} that
	\begin{equation}\label{eq:replacing-result-2}
		\begin{aligned}
			\pa{H_{\psi}+U^k}\pa{\xkp-\xpa}=V^k(\yk-\xpa)+o(\normm{\dk}).
		\end{aligned}
	\end{equation}
	Now we replace the expressions of the  inertial term in term of $\xk, \xkm$ to obtain that 
	\begin{equation}\label{eq:replacing-result-i}
		\pa{H_{\psi}+U^k}\rkp=\Ppa{(1-\ak)\akm+\ak}V^k\rk+(1-\ak)(1-\akm)V^k\rkm+o(\normm{\dk}),
	\end{equation}
	We can write further  
	\begin{align*}
		\Ppa{H_{\psi}+U}\rkp&=\Ppa{U-U^k}\rkp+\Ppa{(1-\ak)\akm+\ak}V^k\rk+(1-\ak)(1-\akm)V^k\rkm\\
		&\quad+o(\normm{\dk}),
	\end{align*}
	Thanks to Remark~\ref{rmk:positive-U}, it is possible to invert the matrice $U+H_{\psi}$ we have, 
	\begin{align*}
		\rkp&=W\Ppa{U-U^k}\rkp+\Ppa{(1-\ak)\akm+\ak}WV^k\rk+(1-\ak)(1-\akm)WV^k\rkm\\
		&\quad+o(\normm{W\dk}),
	\end{align*}
	Let us use the notation \eqref{eq:notation-matr}   with the estimates \eqref{eq:boundness-line3}
	
	\begin{equation*}
		\dkp=\Ppa{M+\begin{bmatrix}
				M^k_1\\
				0
			\end{bmatrix}+\begin{bmatrix}
				M^k_2\\
				0
		\end{bmatrix}}d_k+o(\normm{\dk})=M\dk+o(\normm{\dk}), 
	\end{equation*}
	which concludes the proof. 
\end{proof}
\subsection{Proof of Proposition~\ref{pro:poly-spect-anal}}\label{pr:pro:poly-spect-anal}
\begin{proof}
	We have that 
	\begin{align*}
		M\begin{pmatrix}r_1\\r_2\end{pmatrix}&=\begin{bmatrix}(2a-a^2)H_{\psi}^{-1}V&\pa{a-1}^2H_{\psi}^{-1}V\\
			\Id&0   \end{bmatrix}\begin{pmatrix}r_1\\r_2\end{pmatrix}\\
		&=\begin{pmatrix}(2a-a^2)H_{\psi}^{-1}Vr_1+\pa{a-1}^2H_{\psi}^{-1}Vr_2\\r_1 \end{pmatrix}=\varrho\begin{pmatrix}r_1\\r_2\end{pmatrix}, 
	\end{align*}
	therefore $r_1=\varrho r_2$, and insert it in the first identity to get that
	\begin{align*}
		\varrho^2 r_2=(2a-a^2)H_{\psi}^{-1}V\varrho r_2+(1-a)^2H_{\psi}^{-1}Vr_2
	\end{align*}
	which means that 
	\[
	\Ppa{(2a-a^2)\varrho+(1-a)^2}H_{\psi}Vr_2=\varrho^2r_2
	\] 
	thus there exists $\eta$ such that $ H_{\psi}^{-1}Vr_2=\eta r_2$ moreover $\eta$ satisfies the following equation 
	\begin{align}\label{eq:polynomial-eq}
		\varrho^2-(2a-a^2)\varrho\eta-(1-a)^2\eta=0.
	\end{align}
	The rest of the proof follows  exactly the same step as the proof of \cite[Proposition~4.7,Corollary~4.9]{liang_activity_2017} and we conclude with Lemma~\ref{lem:spect_properysisty}-\ref{lem:spect_property-iv}.
\end{proof}

\section{Proof of the Escape Property}
\subsection{Proof of Theorem~\ref{thm: iga_glbl_escp}}\label{pr:thm: iga_glbl_escp}
Let us define the following mapping for $a\in[\ua,\oa]$. 
\begin{align}\label{eq:operator_T}
	\mathbf{T}(x_2,x_1)\eqdef\begin{bmatrix}
		\nabla\psi^{-1}\Ppa{\nabla\psi\Pa{y(x_2,x_1)}-\gamma\nabla F(y(x_2,x_1))}\\
		x_2
	\end{bmatrix},
\end{align}
where
\[
y(x_2,x_1)=\pa{2a-a^2}x_2+ \pa{1-a}^2x_1.
\]
It is simple to see that $\xpa\in\crit{\Phi}$ if and only if $(\xpa,\xpa)$ is a fixed point of the operator $\mathbf{T}$. We have the following lemma which is an extension of the \cite[Lemma~A.2]{godeme2023provable} to the inertial case.
\begin{lemma}\label{lem:escape-mdinertiel} Let $\mathbf{T}$ be defined as in \eqref{eq:operator_T} then, 
	\begin{enumerate}
		\item[(a)] For all $(x_2,x_1)\in\bbR^{2n}, \det D\mathbf{T}(x_2,x_1)\neq0$, 
		\item[(b)] The set of strict saddle points is contained in the following set 
		\begin{flalign}
			U_{\mathbf{T}}&\eqdef\left\{(x_2,x_1)\in \bbR^{2n}: \mathbf{T}(x_2,x_1)=\begin{bmatrix}x_2\\x_1\end{bmatrix},\max_i\abs{\lambda_i(D\mathbf{T}(x_1,x_2))}>1\right\},\\
			&=\left\{(x,x)\in \bbR^{2n}: \mathbf{T}(x,x)=\begin{bmatrix}x\\x\end{bmatrix},\max_i\abs{\lambda_i(D\mathbf{T}(x,x))}>1\right\}.
		\end{flalign}
	\end{enumerate}
\end{lemma}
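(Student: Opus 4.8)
\textbf{Proof plan for Lemma~\ref{lem:escape-mdinertiel}.}
The plan is to reduce both claims to a computation of the Jacobian $D\mathbf{T}$, which — because of the companion-matrix shape of IMD — has exactly the block structure already analyzed in Section~\ref{sec:ibpg_localconv}, here specialized to $G\equiv 0$. Write $\Gamma(y)\eqdef\nabla\psi^{-1}\pa{\nabla\psi(y)-\gamma\nabla F(y)}=\nabla\psi^{*}\pa{\nabla\psi(y)-\gamma\nabla F(y)}$ and $y(x_2,x_1)=(2a-a^2)x_2+(1-a)^2x_1$, and note that $(2a-a^2)+(1-a)^2=1$, so $y$ is affine with $\partial y/\partial x_2=(2a-a^2)\Id$ and $\partial y/\partial x_1=(1-a)^2\Id$. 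Since $\psi\in C^2$ by \ref{assump_B1} and $F\in C^2$ (which we assume, as is standard for the escape analysis and holds for all our examples), $\mathbf{T}$ is $C^1$ on $\bbR^{2n}$. Using $\nabla\psi^{*}=(\nabla\psi)^{-1}$ and the identity $\nabla^2\psi^{*}(\nabla\psi(z))=\nabla^2\psi(z)^{-1}$ recalled after \ref{assump_B1}, the chain rule gives $D\Gamma(y)=\nabla^2\psi(\Gamma(y))^{-1}\pa{\nabla^2\psi(y)-\gamma\nabla^2F(y)}$, and therefore
\[
D\mathbf{T}(x_2,x_1)=\begin{bmatrix}(2a-a^2)\,D\Gamma(y)&(1-a)^2\,D\Gamma(y)\\ \Id&0\end{bmatrix},\qquad y=y(x_2,x_1).
\]

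For claim (a), I would swap the two $n\times n$ block rows of $D\mathbf{T}$ — this multiplies the determinant by $(-1)^n$ — to obtain a block lower-triangular matrix with diagonal blocks $\Id$ and $(1-a)^2D\Gamma(y)$, whence $\det D\mathbf{T}(x_2,x_1)=(-1)^n(1-a)^{2n}\det D\Gamma(y)$. It then remains to check $\det D\Gamma(y)\neq 0$. The factor $\nabla^2\psi(\Gamma(y))^{-1}$ is positive definite by \ref{assump_B1}; and since $\gamma=a^{\kappa-1}/L<1/L$ whenever $a\in]0,1[$, $L$-smoothness of $F$ relative to $\psi$ gives $\nabla^2\psi(y)-\gamma\nabla^2F(y)=(1-\gamma L)\nabla^2\psi(y)+\gamma\pa{L\nabla^2\psi(y)-\nabla^2F(y)}$, a sum of a positive definite and a positive semidefinite matrix, hence invertible. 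The boundary value $a=1$ is degenerate here (a whole block column of $D\mathbf{T}$ vanishes), but in that case IMD is ordinary non-inertial mirror descent on $\bbR^n$ and the claim is \cite[Lemma~A.2]{godeme2023provable}; I would simply invoke that result and otherwise assume $a\in]0,1[$.

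For claim (b), I first observe that a fixed point of $\mathbf{T}$ necessarily has $x_1=x_2$ (reading the second block) and then $\Gamma(x_1)=x_1$, i.e.\ $\nabla F(x_1)=0$; hence the fixed-point set of $\mathbf{T}$ is $\{(x,x):x\in\crit{\Phi}\}$, which is precisely why the two descriptions of $U_{\mathbf{T}}$ coincide. Now take $\xpa$ a strict saddle of $\Phi=F$, so $\nabla F(\xpa)=0$ and $\nabla^2F(\xpa)$ has a negative eigenvalue. Then $(\xpa,\xpa)$ is a fixed point, $\Gamma(\xpa)=\xpa$, and $D\Gamma(\xpa)=\Id-\gamma\nabla^2\psi(\xpa)^{-1}\nabla^2F(\xpa)$, which is exactly the matrix $WV$ of \eqref{eq:matrices-analysis} in the case $G\equiv 0$ (there $U=0$ and $W=\nabla^2\psi(\xpa)^{-1}$), so that $D\mathbf{T}(\xpa,\xpa)$ is the matrix $M$ of \eqref{eq:matrix-linearization}. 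Because $\nabla^2\psi(\xpa)^{-1}\nabla^2F(\xpa)$ is similar to the symmetric matrix $\nabla^2\psi(\xpa)^{-1/2}\nabla^2F(\xpa)\nabla^2\psi(\xpa)^{-1/2}$, Sylvester's law of inertia gives it a negative eigenvalue $\mu<0$ with a real eigenvector $v$, so $D\Gamma(\xpa)v=\nu v$ with $\nu=1-\gamma\mu>1$. I then use (as in Proposition~\ref{pro:poly-spect-anal}\ref{pro:poly-spect-anal-1}) that $\pa{\varrho v,v}$ is an eigenvector of $D\mathbf{T}(\xpa,\xpa)$ with eigenvalue $\varrho$ as soon as $\varrho^2-(2a-a^2)\nu\varrho-(1-a)^2\nu=0$. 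Since $(2a-a^2)+(1-a)^2=1$, this polynomial equals $1-\nu<0$ at $\varrho=1$, and its discriminant $\nu\pa{(2a-a^2)^2\nu+4(1-a)^2}$ is positive; hence it has a real root $\varrho_+>1$, which is then a real eigenvalue of $D\mathbf{T}(\xpa,\xpa)$ of modulus $>1$. Therefore $(\xpa,\xpa)\in U_{\mathbf{T}}$, proving the inclusion.

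I expect the differentiation, the block-determinant identity, and the definiteness estimate to be routine. The only points that need care are bookkeeping: keeping the step size strictly below $1/L$ so that $\nabla^2\psi-\gamma\nabla^2F$ stays nonsingular — which is exactly what forces the $a=1$ case to be handled separately through the non-inertial result — and transporting the negative curvature of $\nabla^2F(\xpa)$ through the non-Euclidean metric $\nabla^2\psi(\xpa)$, where Sylvester's law together with the quadratic relation between the eigenvalues of $M$ and of $WV$ from Proposition~\ref{pro:poly-spect-anal} do all the work, so that no new estimate is needed.
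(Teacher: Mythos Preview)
Your proposal is correct, and for part~(a) it follows the same line as the paper: compute the block Jacobian, reduce its determinant to that of the off-diagonal block $(1-a)^2 D\Gamma(y)$ (the paper phrases this as $\det(-\mathbf{B})$), and then use relative smoothness together with strong convexity of $\psi$ to obtain $\nabla^2\psi(y)-\gamma\nabla^2 F(y)\succ(1-\gamma L)\nabla^2\psi(y)\succ 0$. You additionally flag the degenerate endpoint $a=1$ (where both $(1-a)^2=0$ and $\gamma=1/L$) and dispose of it by reduction to the non-inertial result; the paper does not isolate this edge case.

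For part~(b) your route genuinely differs from the paper's and is in fact the sounder one. After the same similarity that symmetrizes $\nabla^2\psi(\xpa)^{-1}\nabla^2 F(\xpa)$, the paper asserts that $(v,0)^\top$ is an eigenvector of $D\mathbf T(\xpa,\xpa)$ with eigenvalue $\eta>1$; but the bottom block row of $D\mathbf T(\xpa,\xpa)$ is $[\Id\ \ 0]$, so applying it to $(v,0)^\top$ yields second component $v$, not $0$, and that eigenvector computation is erroneous as written. Your argument sidesteps this: you extract a real eigenvalue $\nu=1-\gamma\mu>1$ of $D\Gamma(\xpa)$ via Sylvester's law of inertia on $\nabla^2\psi(\xpa)^{-1/2}\nabla^2 F(\xpa)\nabla^2\psi(\xpa)^{-1/2}$, then use the companion-matrix relation $\varrho^2-(2a-a^2)\nu\varrho-(1-a)^2\nu=0$ from Proposition~\ref{pro:poly-spect-anal}\ref{pro:poly-spect-anal-1} and note that this polynomial takes the value $1-\nu<0$ at $\varrho=1$ while going to $+\infty$, forcing a real root $\varrho_+>1$ with associated eigenvector $(\varrho_+ v,\,v)^\top$. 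This is the correct eigenvector structure for the block companion matrix and, as a bonus, your observation that fixed points of $\mathbf T$ are exactly the diagonal pairs $(x,x)$ with $x\in\crit\Phi$ makes explicit why the two descriptions of $U_{\mathbf T}$ coincide, something the paper leaves implicit.
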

\begin{proof}$~$
	\begin{enumerate}
		\item[(a)] Since $\psi$ is a $C^2$ function, and thus $\nabla\psi$ is $C^{1}$, and as $\psi$ is strongly convex, the inverse function theorem ensures that $(\nabla\psi)^{-1}$ is a local diffeomorphism. Moreover, $F$ is $C^2$, therefore we define the following for simplicity
		\begin{multline*}
			\mathbf{A}(x_2,x_1)\eqdef (2a-a^2)\nabla^2\psi^{-1}\Ppa{\nabla\psi(y\pa{x_2,x_1})-\gamma\nabla F\pa{y\pa{x_1,x_2}}}\\\Ppa{\nabla^2\psi(y\pa{x_2,x_1})-\gamma\nabla^2F(y\pa{x_2,y_1})},   
		\end{multline*}
		and 
		\begin{multline*}
			\mathbf{B}(x_2,x_1)\eqdef (1-a)^2\nabla^2\psi^{-1}\Ppa{\nabla\psi(y\pa{x_2,x_1})-\gamma\nabla F\pa{y\pa{x_1,x_2}}}\\\Ppa{\nabla^2\psi(y\pa{x_2,x_1})-\gamma\nabla^2F(y\pa{x_2,y_1})},   
		\end{multline*}
		then one can write that   
		\begin{flalign}
			D\mathbf{T}(x_2,x_1)=\begin{bmatrix}
				\mathbf{A}(x_2,x_1)&\mathbf{B}(x_2,x_1)\\
				\Id&0
			\end{bmatrix}.    
		\end{flalign}
		We deduce that for any $(x_2,x_1)$ we have that $\det D\mathbf{T}(x_2,x_1)=\det (-\mathbf{B}(x_2,x_1)).$
		Therefore, it suffices to show that  $\det (-\mathbf{B}(x_2,x_1))\neq0$ which hold  since   $\forall \pa{x_2,x_1},\quad\nabla^2\psi(y\pa{x_2,x_1})-\gamma \nabla^2F(y\pa{x_2,x_1})$ is invertible. Indeed we have,
		\begin{align*}
			\nabla^2\psi(y\pa{x_2,x_1})-\gamma \nabla^2F(y\pa{x_2,x_1})\succ(1-\gamma L) \nabla^2\psi(y\pa{x_2,x_1})\succ 0,
		\end{align*}
		where we have used the $L-$smooth adaptable property and the strong convexity of $\psi$.  
		\item[(b)]  Let $\xpa$ be a strict saddle point therefore $\pa{\xpa,\xpa}$ is a fixed point of $\mathbf{T}$. To have that $(\xpa,\xpa)\in U_{\mathbf{T}}$ it remains to show that $D\mathbf{T}$ has an eigenvalue of magnitude greater than 1. 
		We have
		\begin{align*}
			D\mathbf{T}(\xpa,\xpa)=\begin{bmatrix}
				(2a-a^2)\Ppa{\Id-\gamma\nabla^2\psi\pa{\xpa}^{-1}\nabla^2F(\xpa)},&(1-a)^2\Ppa{\Id-\gamma \nabla^2\psi\pa{\xpa}^{-1}\nabla^2F(\xpa)}\\
				\Id&0
			\end{bmatrix}.
		\end{align*}
		Let us remark that $D\mathbf{T}(\xpa,\xpa)$ is similar to the following matrix
		\begin{align*}
			\wtilde{\mathbf{T}}=\begin{bmatrix}
				(2a-a^2)\Ppa{\Id-\gamma H_{\psi}^{-1/2}\nabla^2F(\xpa)H_{\psi}^{1/2}},&(1-a)^2\Ppa{\Id-\gamma H_{\psi}^{-1/2}\nabla^2F(\xpa)H_{\psi}^{1/2}}\\
				\Id&0
			\end{bmatrix},
		\end{align*}
		where we have applied the  following transformation $\begin{bmatrix}H_{\psi}^{-1/2}&0\\0&H_{\psi}^{-1/2}
		\end{bmatrix}D\mathbf{T}(\xpa,\xpa)\begin{bmatrix}H_{\psi}^{1/2}&0\\0&H_{\psi}^{1/2}
		\end{bmatrix}$. 
		For $\gamma<1/L$, the symmetric matrix $\Id-\gamma H_{\psi}^{-1/2}\nabla^2F(\xpa)H_{\psi}^{1/2}$ has an eigenvalue of magnitude greater than one,(see Lemma~\ref{lem:spect_properysisty}). Let us denote by
		$v$ the eigenvector associated with this eigenvalue that we denote $\eta>1.$ We claim that the vector $\begin{bmatrix} v\\0\end{bmatrix}$ is an eigenvector associated with the eigenvalue $\eta$ of the  matrix $\wtilde{\mathbf{T}}$.
		Indeed, we have 
		\begin{align*}
			\wtilde{\mathbf{T}}\begin{bmatrix} v\\0\end{bmatrix}=\begin{bmatrix}(2a-a^2)v\eta+(1-a)^2v\eta  \\0\end{bmatrix}=\eta\begin{bmatrix}v\\0\end{bmatrix}, 
		\end{align*}
		which conclude the proof. 
	\end{enumerate}
\end{proof}
To show our Claim, we combine the global convergence result of Theorem~\ref{thm:Global_conv_ABPG}-\ref{thm:Global_conv_ABPG-i}  the previous Lemma and the center stable manifold theorem see \cite[Corollary 1]{lee_first-order_2019}. 
This allows us to say that  the set \[\Bba{(z_0,z_{-1})\in\bbR^{2n}:\lim\limits_{k\to\infty} \mathbf{T}^{k}((z_0,z_{-1}))\in\text{strisad}(\Phi)}\] has measure zero.   

\subsection{Proof of Lemma~\ref{lem:unstablebpg}}\label{pr:lem:unstablebpg}
Let us first observe that since $\xpa\in\nd{\Phi}$,  using the same arguments  as for the finite time identification Lemma~\ref{lem:LemFinite} for any $x=\pa{x_2,x_1}\in\bbR^{2n}$ near $\pa{\xpa,\xpa}$, $\mathbf{T}\pa{x_2,x_1}\in\wtilde{\calW}_{\xpa}$.
The subsequent portion of the proof faithfully adheres to the perturbation analysis used in \cite[Theorem~4.1]{davis_proximal_2022} with a minor adjustment to the inertial Bregman case. 

Let $\widetilde{G}:\bbR^n\rightarrow\bbR$ be any $C^2-$smooth  extension (representative) of $G$ on the neighborhood of $\xpa$ in $\calW_{\xpa},$ consider the following problem defining  $P_1\mathbf{T}$ near $\pa{\xpa,\xpa}$,
\begin{equation}\tag{$\widetilde{\calP}_x$}
	\min\limits_{u\in\calW_{\xpa}}\Upsilon(x,u)\eqdef \Bba{F(y(x))+\widetilde{G}(u)+\pscal{\nabla F(y(x)),u-y(x)}+\frac{1}{\gamma}D_{\psi}(u,y(x))}. 
\end{equation}
We can write the map $\mathbf{T}$ as
\[
\mathbf{T}(x)=\begin{bmatrix}
	\min\limits_{u\in\calW_{\xpa}}\Upsilon(x,u)\\
	x
\end{bmatrix},
\]
To apply the perturbation result \cite[Theorem 3.1]{shapiro_second_1985} to $\Upsilon$. We need a quadratic growth condition which we got using the fact that  $\psi$ is strongly convex and thus $u\mapsto\Upsilon(x,u)$ is also $\frac{\sigma_{\psi}}{\gamma}$-strongly convex. We also need to check a  level-boundedness condition. This comes from a sufficient condition see \cite[Lemma~2.4]{davis_proximal_2022}. $u(x)$, the minimizer of $\Upsilon(z,\cdot)$ is a continuous map in near $(\xpa,\xpa)$ then we apply the perturbation analysis \cite[Theorem 3.1]{shapiro_second_1985} with the following Lagrangian function
\begin{equation*}
	\calL(x,u,\lambda)\eqdef \Upsilon(x,u)+\pscal{W(u),\lambda)},
\end{equation*}
where $\lambda$ is the vector of Lagrange multipliers.

Since $u\longmapsto\Upsilon(x,u)$ is a strongly convex function and $u\pa{\xpa,\xpa}=\xpa$ is the unique minimizer on $\calW_{\xpa}.$ Thus there exists optimal Lagrange multipliers $\bar{\lambda}$ such that 
\begin{equation*}
	\nabla_u\calL\Ppa{(\xpa,\xpa),\xpa,\bar{\lambda}}=\nabla\wtilde{G}(\xpa)+\nabla F(\xpa)+\sum_{i=1}^{n}\bar{\lambda}_{i}\nabla W_i(\xpa)=0.
\end{equation*}
From \cite[Theorem~3.1]{shapiro_second_1985}, we get that locally for any $x$ near $(\xpa,\xpa), u(x)=P_1\mathbf{T}(x)$ is a $C^1-$ smooth map and  we deduce that for any  $h=(h_1,h_2)\in\bbR^{2n}$ with $\normm{h}\rightarrow0$,   
\begin{equation}\label{dPsi}
	\pscal{\nabla P_1\mathbf{T}(\xpa,\xpa),h}=\argmin_{v\in\Tt_{\calW_{\xpa}}}  2\pscal{\nabla^2_{zu}\calL\Ppa{(\xpa,\xpa),\xpa,\bar{\lambda}}v,h}+\pscal{\nabla^2_{uu}\calL\Ppa{(\xpa,\xpa),\xpa,\bar{\lambda}}v,v}.
\end{equation}
The expression of the Hessians of the Lagrangian is of the form $2n\times n$ 
\begin{align*}
	\calH_{zu}&\eqdef\nabla^2_{zu}\calL\Ppa{(\xpa,\xpa),\xpa,\bar{\lambda}}\\
	&=\begin{bmatrix}
		(2-a)\Ppa{\nabla^2F(\xpa)-\frac{1}{\gamma}\nabla^2\psi(\xpa)},\\
		(1-a)^2\Ppa{\nabla^2F(\xpa)-\frac{1}{\gamma}\nabla^2\psi(\xpa)}
	\end{bmatrix},      
\end{align*}
and 
\[
\calH_{uu}\eqdef\nabla^2_{uu}\calL\Ppa{(\xpa,\xpa),\xpa,\bar{\lambda}}=\nabla^2\wtilde{G}(\xpa)+\frac{1}{\gamma}\nabla^2\psi(\xpa)+\sum_{i=1}^{n}\bar{\lambda}_{i}\nabla^2_{uu}W_i(\xpa).
\]
The minimization problem \eqref{dPsi} is a quadratic form over the tangent space $\Tt_{\calW_{\xpa}}$ thus the minimizer $\bar{v}$ must satisfy the following condition 
\begin{equation}\label{eq:vbar-sol}
	\proj{\Tt_{\calW_{\xpa}}}\calH_{uu}\proj{\Tt_{\calW_{\xpa}}}\bar{v}+\proj{\Tt_{\calW_{\xpa}}}P_1\calH_{zu}^{\top}\proj{\Tt_{\calW_{\xpa}}}h_1+\proj{\Tt_{\calW_{\xpa}}}P_2\calH_{zu}^{\top}\proj{\Tt_{\calW_{\xpa}}}h_2=0.
\end{equation}
Let us denote  $\widetilde{\calH}_{uu}=\proj{\Tt_{\calW_{\xpa}}}\calH_{uu}\proj{\Tt_{\calW_{\xpa}}},\wtilde{\calH}_{zu}^2=\proj{\Tt_{\calW_{\xpa}}}P_1\calH_{zu}^{\top}\proj{\Tt_{\calW_{\xpa}}}$ and $\wtilde{\calH}_{zu}^1=\proj{\Tt_{\calW_{\xpa}}}P_2\calH_{zu}^{\top}\proj{\Tt_{\calW_{\xpa}}}$. \eqref{eq:vbar-sol} becomes
\[
\wtilde{\calH}_{uu}\bar{v}+ \wtilde{\calH}^2_{zu}h_1+\wtilde{\calH}^1_{zu}h_2=0. 
\]
Since $\xpa$ is  the unique minimizer of $\Upsilon\Ppa{(\xpa,\xpa),\cdot}$ and we also observe that
$\Upsilon\Ppa{(\xpa,\xpa),\cdot}$ has the same active manifold $\calW_{\xpa}$ as a sum of $G$ and a smooth function this implies that
$\Upsilon\Ppa{(\xpa,\xpa),\cdot}$ has at least a quadratic growth near $\xpa$. Therefore  $\wtilde{\calH}_{uu}$ is symmetric positive definite and invertible. Then we solve \eqref{dPsi} to get that
\begin{equation*}
	\pscal{\nabla P_1\mathbf{T}\Ppa{(\xpa,\xpa)},h}=-\wtilde{\calH}_{uu}^{-1}\wtilde{\calH}^2_{zu}h_1-\wtilde{\calH}_{uu}^{-1}\wtilde{\calH}^1_{zu}h_2, \quad \forall h_1,h_2\in \Tt_{\calW_{\xpa}}.
\end{equation*}
At this point, we  get that $\mathbf{T}$ is a $C^1-$smooth map. We immediately deduce that 
\begin{equation*}
	\pscal{\nabla \mathbf{T}\Ppa{(\xpa,\xpa)},h}=\begin{bmatrix}
		-\wtilde{\calH}_{uu}^{-1}\wtilde{\calH}^2_{zu}h_2-\wtilde{\calH}_{uu}^{-1}\wtilde{\calH}^1_{zu}h_1\\
		h_1
	\end{bmatrix}, \quad \forall h_1,h_2\in \Tt_{\calW_{\xpa}}.
\end{equation*}

It remains now to show that $\nabla \mathbf{T}\pa{\xpa,\xpa}$ has at least one eigenvalue greater than one. 
Let us first show that when $\xpa\in\actstrisad{\Phi}$, $-\wtilde{\calH}_{uu}^{-1}\wtilde{\calH}^2_{zu}$ has an eigenvalue greater than one. Let $\eta$ be  an real eigenvalue associated to an eigenvector  $v\in \Tt_{\calW_{\xpa}}$ \ie 
\begin{align*}
	-\wtilde{\calH}_{uu}^{-1}\wtilde{\calH}^2_{zu}v=\eta v &\iff \Ppa{\eta\widetilde{\calH}_{uu}+\widetilde{\calH}^2_{zu}}v=0.
\end{align*}
Let us observe that since  $\xpa$ is an active strict saddle point for the problem, then $\widetilde{\calH}_{uu}+\widetilde{\calH}^2_{zu}$ has a strict negative eigenvalue. Indeed, 
\[
\widetilde{\calH}_{uu}+\widetilde{\calH}^2_{zu}=\proj{\Tt_{\calW_{\xpa}}}\Ppa{\nabla^2\wtilde{G}(\xpa)+\sum_{i=1}^{n}\bar{\lambda}_{i}\nabla^2_{uu}W_i(\xpa)+ (1-a)^2\Ppa{\nabla^2F(\xpa)-\frac{1}{\gamma}\nabla^2\psi(\xpa)}}\proj{\Tt_{\calW_{\xpa}}}.
\]
Combining  with the  fact that $\widetilde{\calH}_{uu}$ is positive definite means that there exists $\eta>1$ such that the matrix $\eta\widetilde{\calH}_{uu}+\widetilde{\calH}_{zu}$ is singular. By construction, we get that $\eta>1$ is an eigenvalue of $\nabla \mathbf{T}$ associated to the eigenvector $\begin{bmatrix}v\\0\end{bmatrix}$ of $\mathbf{T}$.

\section{Riemannian Geometry and Partial Smoothness}\label{sec:rieman-tool}

\subsection{Riemannian geometry}

Let $\calM$ be a $C^2-$smooth embedded submanifold of $\bbR^n$ around a point $x\in\bbR^n.$ With some abuse of terminology, we will  state  $C^2-$manifold instead of $C^2-$smooth embedded submanifold of $\bbR^n$. The natural embedding of a submanifold $\calM$ into $\bbR^n$ permits to define a Riemannian structure and to introduce geodesics on $\calM,$ and we simply say $\calM$ is a Riemannian manifold. We denote respectively $\tgtManif{\calM}{x}$ and $\normSp{\calM}{x}$ the tangent and normal space of $\calM$ at $x\in\calM.$ 

\paragraph*{Exponential map} Geodesics generalize the concept of straight lines from linear spaces to manifolds. It is a smooth curve from an interval of $\bbR$ to $\calM$, with intrinsic acceleration normal everywhere to $\calM.$ Roughly speaking, it is locally the shortest path between two points on $\calM.$ Let denote by $\geod{t}{x}{h}$ the value at $t\in\bbR$ of the geodesic starting $\geod{0}{x}{h}=x\in\calM$ with velocity $\dotgeod{t}{x}{h}=\dfrac{d\geod{t}{x}{h}}{dt}=h\in\tgtManif{\calM}{x}$ (uniquely defined). 
It is  important to realize that for every $h\in\tgtManif{\calM}{x}$ there exists an interval $I$ around $0$  and a unique geodesic $\geod{t}{x}{h}:I\rightarrow\calM$ such that 
$\geod{0}{x}{h}=x$ and $\dotgeod{0}{x}{h}=t.$ The mapping $\Expv_{x}:\tgtManif{\calM}{x}\rightarrow\calM,\quad h\mapsto \Exp{x}{h}=\geod{1}{x}{h},$ is called the \textit{Exponential map.} Given $x,x'\in\calM,$ and a direction $h\in\tgtManif{\calM}{x}$ we are want such map to fulfill $\Exp{x}{h}=x'=\geod{1}{x}{h}.$

\paragraph*{Parallel translation} Given two points $x,x'\in\calM $ let $\tgtManif{\calM}{x},\tgtManif{\calM}{x'}$ be the corresponding tangent spaces. Define $\tau:\tgtManif{\calM}{x}\rightarrow\tgtManif{\calM}{x'}$, the 
\textit{parallel translation} along the unique geodesic joining $x$ to $x'$, which is an isomorphism and isometry with respect to the Riemannian metric.

\paragraph*{Riemannian gradient and Hessian} For a vector $v\in\normSp{\calM}{x},$ the Weingarten map of $\calM$ at $x$ is the operator $\Wgtmap{x}(.,v):\tgtManif{\calM}{x}\rightarrow\tgtManif{\calM}{x}$ defined by: 
$$
\Wgtmap{x}(h,v)=-\proj{\tgtManif{\calM}{x}}dV[h],
$$
where $V$ is any local extension of $v$ to a normal vector field on $\calM$. The definition does not depend of the choice of the extension $V$.  The  Weingarten map as defined above is a symmetric linear operator which is closely tied to the second fundamental form of $\calM$ (\cite[Definition 5.48]{boumal2023intromanifolds}). 
Let $g$ be a real-valued function which is $C^2$ along $\calM$ around $x$. The covariant gradient of $g$ at $x'\in\calM$ is the vector  denoted $\nabla_{\calM}g(x')\in\tgtManif{\calM}{x'}$ defined by:

$$
\pscal{\nabla_{\calM}g(x'),h}=\dfrac{d}{dt}g(\proj{\calM}(x'+th))|_{t=0},\quad \forall h\in\tgtManif{\calM}{x'}, 
$$

where $\proj{\calM}$ is the projection onto $\calM.$ The covariant Hessian of $g$ at $x'$ is the symmetric linear mapping $\nabla_{\calM}^2g(x')$ from $\tgtManif{\calM}{x'}$ to itself which is defined as

$$
\pscal{\nabla_{\calM}^2g(x')h;h}=\dfrac{d^2}{dt^2}g(\proj{\calM}(x'+th))|_{t=0},\quad \forall h\in\tgtManif{\calM}{x'}.
$$

This definition agrees with the definition using geodesics or connections. Now,  assume that $\calM$ is a Riemannian embedded submanifold of $\bbR^n$ and $g$ has a $C^2-$smooth restriction on $\calM.$ This can be characterization by the existence of a $C^2-$ smooth extension (representative) of $g$ \ie a $C^2$-smooth function $\widetilde{g}$ on $\bbR^n$ such that $\widetilde{g}$ agrees with $g$ on $\calM.$ Thus, the Riemannian gradient $\nabla_{\calM}g(x')$ is also given by
$$
\nabla_{\calM}g(x')=\proj{\tgtManif{\calM}{x'}}\nabla\widetilde{g}(x'),
$$
and $\forall h\in\tgtManif{\calM}{x'},$ the Riemannian Hessian reads
$$
	\begin{aligned}
		\nabla^2_{\calM}g(x')h&= \proj{\tgtManif{\calM}{x'}}d\Ppa{\nabla_{\calM}g(x')}[h]=\proj{\tgtManif{\calM}{x'}}d\Ppa{\nabla_{\calM}\widetilde{g}(x')}[h]\\
		&=\proj{\tgtManif{\calM}{x'}}\nabla^2\widetilde{g}(x')h+ \Wgtmap{x'}(h,\proj{\normSp{\calM}{x'}}\nabla\widetilde{g}(x')).
	\end{aligned}
$$
If $\calM$ is affine or linear subspace of $\bbR^n,$ then $\calM=x+\tgtManif{\calM}{x}$ and 
$\Wgtmap{x'}(h,\proj{\normSp{\calM}{x'}}\nabla\widetilde{g}(x'))=0$ and finally 
\begin{align*}
	\nabla^2_{\calM}g(x')=\proj{\tgtManif{\calM}{x'}}\nabla^2\widetilde{g}(x')\proj{\tgtManif{\calM}{x'}}.
\end{align*}
The next two lemmas will be instrumental when analyzing the local convergence behaviour of our inertial algorithm. We refer to \cite[Section~2.6]{liang_thesis_2016} for their proofs.  
\begin{lemma}\label{lem: Tgtxk-to-Tgtx} Let $x\in\calM$ and $\xk$ a sequence converging to $x$ in $\calM.$ Denote $\tau_k:\tgtManif{\calM}{\xk}\rightarrow\tgtManif{\calM}{\xk}$ be the parallel translation along the unique geodesic joining  $x$ to $\xk$. Then, for any bounded vector $u\in\bbR^n,$ we have:
	\begin{equation}
		\Ppa{\frac{1}{\tau_k}\proj{\tgtManif{\calM}{\xk}}-\proj{\tgtManif{\calM}{x}}}u=o\Ppa{\normm{u}}.
	\end{equation}
\end{lemma}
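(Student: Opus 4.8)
The plan is to prove the stronger statement that the linear map $\frac{1}{\tau_k}\proj{\tgtManif{\calM}{\xk}} - \proj{\tgtManif{\calM}{x}}$ on $\bbR^n$ has operator norm tending to $0$ as $k\to\infty$; then for any bounded $u$ (indeed along any bounded sequence $u_k$) its norm is at most $o(1)\normm{u}=o(\normm{u})$, which is the form used in the application. Throughout, $\frac{1}{\tau_k}$ denotes $\tau_k^{-1}\colon\tgtManif{\calM}{\xk}\to\tgtManif{\calM}{x}$, precomposed with $\proj{\tgtManif{\calM}{\xk}}$ and postcomposed with the inclusion $\tgtManif{\calM}{x}\hookrightarrow\bbR^n$. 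Using $\proj{\tgtManif{\calM}{x}}^2=\proj{\tgtManif{\calM}{x}}$, I would split
\[
	\frac{1}{\tau_k}\proj{\tgtManif{\calM}{\xk}} - \proj{\tgtManif{\calM}{x}}
	= \Ppa{\frac{1}{\tau_k} - \proj{\tgtManif{\calM}{x}}}\proj{\tgtManif{\calM}{\xk}}
	+ \proj{\tgtManif{\calM}{x}}\Ppa{\proj{\tgtManif{\calM}{\xk}} - \proj{\tgtManif{\calM}{x}}} ,
\]
and set $\varepsilon_k^{(1)}=\sup\{\normm{\tfrac{1}{\tau_k}w-w}:w\in\tgtManif{\calM}{\xk},\ \normm{w}\le 1\}$ and $\varepsilon_k^{(2)}=\normm{\proj{\tgtManif{\calM}{\xk}}-\proj{\tgtManif{\calM}{x}}}_{\mathrm{op}}$. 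A short computation—for $w=\proj{\tgtManif{\calM}{\xk}}u$ decompose $\pa{\tfrac{1}{\tau_k}-\proj{\tgtManif{\calM}{x}}}w=\pa{\tfrac{1}{\tau_k}w-w}+\pa{\Id-\proj{\tgtManif{\calM}{x}}}w$, and note that for $w\in\tgtManif{\calM}{\xk}$ one has $\pa{\Id-\proj{\tgtManif{\calM}{x}}}w=\pa{\Id-\proj{\tgtManif{\calM}{x}}}\pa{\proj{\tgtManif{\calM}{\xk}}-\proj{\tgtManif{\calM}{x}}}w$—then yields the operator-norm bound $\varepsilon_k^{(1)}+2\varepsilon_k^{(2)}$ for the left-hand side. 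Hence it suffices to show $\varepsilon_k^{(1)}\to 0$ and $\varepsilon_k^{(2)}\to 0$.

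For $\varepsilon_k^{(2)}$: since $\calM$ is a $C^2$ embedded submanifold, the Gauss map $y\mapsto\proj{\tgtManif{\calM}{y}}$ is $C^1$ on $\calM$—in a local $C^2$ parametrization $\varphi$ around $x$ it equals $D\varphi\,(D\varphi^{\top}D\varphi)^{-1}D\varphi^{\top}$, a $C^1$ function of the chart variable—so $\xk\to x$ in $\calM$ forces $\varepsilon_k^{(2)}=O(\normm{\xk-x})\to 0$. For $\varepsilon_k^{(1)}$: let $\gamma_k\colon[0,1]\to\calM$ be the (for $k$ large, unique and minimizing) geodesic from $\xk$ to $x$, of length $\ell_k=d_{\calM}(x,\xk)$, which tends to $0$ because near $x$ the intrinsic and Euclidean distances on $\calM$ are comparable. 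For $w\in\tgtManif{\calM}{\xk}$ let $W(\cdot)$ be its parallel transport along $\gamma_k$; the parallel-transport equation forces $\dot W(t)$ to be the normal component of the ambient derivative of $W$, which by the Gauss--Weingarten relations obeys $\normm{\dot W(t)}\le C\normm{\dot\gamma_k(t)}\normm{W(t)}$, with $C$ bounding the second fundamental form (equivalently the Weingarten map) on a fixed compact neighborhood of $x$ in $\calM$. The Gr\"onwall inequality gives $\normm{W(t)}\le e^{C\ell_k}\normm{w}$, whence $\normm{\tfrac{1}{\tau_k}w-w}=\normm{W(1)-W(0)}\le\int_0^1\normm{\dot W(t)}\,dt\le C\ell_k e^{C\ell_k}\normm{w}$, so $\varepsilon_k^{(1)}\le C\ell_k e^{C\ell_k}\to 0$. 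Combining, $\varepsilon_k^{(1)}+2\varepsilon_k^{(2)}\to 0$, which proves the lemma.

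The main obstacle I expect is bookkeeping: $\frac{1}{\tau_k}$ is an isomorphism between two \emph{different} tangent spaces, so "$\frac{1}{\tau_k}-\Id$" is only meaningful after composing with the inclusion into $\bbR^n$, and the normal-space corrections in the split above must be tracked carefully. A secondary subtlety is the low regularity of $\calM$: being only $C^2$, its second fundamental form and Christoffel symbols are merely continuous, so the Gr\"onwall step and the continuity of the Gauss map should be argued via ODE theory with continuous coefficients together with uniform bounds on a compact neighborhood, rather than by invoking smoothness. A more pedestrian alternative avoids the intrinsic computation entirely: work in a single $C^2$ chart around $x$, write the parallel-transport ODE in coordinates, and invoke continuous dependence of its solution on the integration interval, which shrinks to a point as $\xk\to x$; the price is the explicit translation between chart coordinates and the ambient orthogonal projections.
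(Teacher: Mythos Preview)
Your argument is correct. The decomposition into the two error terms $\varepsilon_k^{(1)}$ (parallel transport approaches the identity) and $\varepsilon_k^{(2)}$ (continuity of the Gauss map $y\mapsto\proj{\tgtManif{\calM}{y}}$) is clean, and both estimates are standard for $C^2$ embedded submanifolds: the first via the parallel-transport ODE and a Gr\"onwall bound involving the second fundamental form, the second via a local $C^2$ chart. The bookkeeping you flag---that $\tau_k^{-1}$ maps between different tangent spaces and must be read through the ambient inclusion---is handled correctly by your splitting, and the $\varepsilon_k^{(1)}+2\varepsilon_k^{(2)}$ bound follows as you indicate.

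As for comparison with the paper: the paper does \emph{not} give its own proof of this lemma. It simply states the result and refers the reader to \cite[Section~2.6]{liang_thesis_2016}. So there is nothing to compare against in the paper itself; your proof stands on its own and is in line with how such statements are typically established in the Riemannian-geometry literature. The one caveat you already note---uniqueness of geodesics when the Christoffel symbols are merely continuous on a $C^2$ manifold---is a genuine technical wrinkle, but it is inherited from the lemma's own hypothesis (``the unique geodesic joining $x$ to $\xk$'') rather than introduced by your argument, and in any case for $\xk$ close to $x$ one can sidestep it by working in a single chart as you suggest.
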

\begin{lemma}\label{lem: Riem-Tayl} Let $x,x'$ be two close points in $\calM,$ denote $\tau: \tgtManif{\calM}{x}\rightarrow \tgtManif{\calM}{x'}$ the parallel  translation along the unique geodesic joining $x$ too $x'.$ The Riemannian Taylor expansion  of $g \in C^2(\calM)$ around $x$ reads, 
	\begin{equation}
		\frac{1}{\tau}\nabla_{\calM}g(x')=\nabla_{\calM}g(x)+\nabla^2_{\calM}g(x)\proj{\tgtManif{\calM}{x}}(x'-x) +o(\normm{x'-x}).
	\end{equation}
\end{lemma}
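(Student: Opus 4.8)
\medskip
\noindent
The plan is to reduce this to a first-order expansion of a curve living in the \emph{fixed} vector space $\tgtManif{\calM}{x}$, obtained by parallel-transporting the Riemannian gradient of $g$ back to $x$ along the geodesic joining $x$ to $x'$. First I would introduce $h\eqdef\Expv_{x}^{-1}(x')\in\tgtManif{\calM}{x}$, well defined and unique for $x'$ close enough to $x$, and let $\gamma$ be the geodesic $t\mapsto\Expv_{x}(th)$, so that $\gamma(0)=x$, $\gamma(1)=x'$ and $\dot\gamma(0)=h$. Since $\calM$ is a $C^2$ embedded submanifold, the second-order expansion $\Expv_{x}(th)=x+th-\tfrac{t^2}{2}\mathrm{II}_{x}(h,h)+O(t^3)$, with $\mathrm{II}_{x}(h,h)\in\normSp{\calM}{x}$ the second fundamental form, gives $\proj{\tgtManif{\calM}{x}}(x'-x)=h+O(\normm{h}^2)$ and $\normm{h}=\normm{x'-x}+O(\normm{x'-x}^2)$. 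As $\nabla^2_{\calM}g(x)$ is a fixed bounded linear operator on $\tgtManif{\calM}{x}$, I may then replace $\nabla^2_{\calM}g(x)\proj{\tgtManif{\calM}{x}}(x'-x)$ by $\nabla^2_{\calM}g(x)h$ at the price of an $o(\normm{x'-x})$ term, so it suffices to prove $\tau^{-1}\nabla_{\calM}g(x')=\nabla_{\calM}g(x)+\nabla^2_{\calM}g(x)h+o(\normm{h})$.

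Next, for $t\in[0,1]$ let $\tau_t:\tgtManif{\calM}{x}\to\tgtManif{\calM}{\gamma(t)}$ be parallel transport along $\gamma$ from $x$ to $\gamma(t)$, so $\tau_0=\Id$ and $\tau_1=\tau$, and consider the $\tgtManif{\calM}{x}$-valued curve $c(t)\eqdef\tau_t^{-1}\nabla_{\calM}g(\gamma(t))$. Because $g\in C^2(\calM)$ the field $t\mapsto\nabla_{\calM}g(\gamma(t))$ is $C^1$ along $\gamma$, hence $c$ is $C^1$, and by the definition of the Riemannian Hessian as the covariant derivative of the Riemannian gradient, $c'(t)=\tau_t^{-1}\nabla^2_{\calM}g(\gamma(t))[\dot\gamma(t)]$; moreover $\dot\gamma(t)=\tau_t h$ because the velocity field of a geodesic is parallel along it, so $c'(t)=\Ppa{\tau_t^{-1}\nabla^2_{\calM}g(\gamma(t))\tau_t}h$. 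Integrating from $0$ to $1$ yields
\[
\tau^{-1}\nabla_{\calM}g(x')=\nabla_{\calM}g(x)+\int_0^1\Ppa{\tau_t^{-1}\nabla^2_{\calM}g(\gamma(t))\tau_t}h\;dt .
\]
To conclude I would write $\tau_t^{-1}\nabla^2_{\calM}g(\gamma(t))\tau_t=\nabla^2_{\calM}g(x)+E_t$ with $E_t$ an operator on $\tgtManif{\calM}{x}$: the map $(t,h)\mapsto\tau_t^{-1}\nabla^2_{\calM}g(\gamma(t))\tau_t$ is continuous on the compact set $[0,1]\times\Ba{\normm{h}\le r}$ and equals $\nabla^2_{\calM}g(x)$ at $h=0$, so uniform continuity gives $\sup_{t\in[0,1]}\normm{E_t}\to0$ as $\normm{h}\to0$, whence $\int_0^1 E_t h\,dt=o(\normm{h})$. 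Combining with the first step and $\normm{h}=\normm{x'-x}+O(\normm{x'-x}^2)$ gives the stated expansion.

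The step I expect to cost the most care is the identity $c'(t)=\tau_t^{-1}\nabla^2_{\calM}g(\gamma(t))[\dot\gamma(t)]$ --- that differentiating the parallel-transported gradient returns exactly the Riemannian Hessian contracted with the velocity --- together with the geodesic-versus-Euclidean comparison $\proj{\tgtManif{\calM}{x}}(x'-x)=\Expv_{x}^{-1}(x')+O(\normm{x'-x}^2)$; both are classical for $C^2$ embedded submanifolds but must be invoked carefully (see, \eg, \cite{boumal2023intromanifolds} and \cite[Section~2.6]{liang_thesis_2016}). A more computational route would instead start from a $C^2$ extension $\widetilde{g}$ of $g$, expand $\nabla^2_{\calM}g(y)=\proj{\tgtManif{\calM}{y}}\nabla^2\widetilde{g}(y)\proj{\tgtManif{\calM}{y}}$ plus a Weingarten-curvature term, and Taylor-expand each piece; this is messier, so I would favour the intrinsic argument.
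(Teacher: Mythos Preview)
Your argument is correct. The paper itself does not give a proof of this lemma: it simply states, just before Lemma~\ref{lem: Tgtxk-to-Tgtx} and Lemma~\ref{lem: Riem-Tayl}, that ``We refer to \cite[Section~2.6]{liang\_thesis\_2016} for their proofs.'' So there is no in-paper proof to compare against, and your sketch in fact supplies what the paper omits.

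Your intrinsic route---pulling the gradient back to $\tgtManif{\calM}{x}$ via parallel transport along the geodesic, differentiating to obtain the covariant derivative (hence the Riemannian Hessian contracted with the velocity), integrating, and then using the expansion $\Expv_{x}(th)=x+th-\tfrac{t^2}{2}\mathrm{II}_{x}(h,h)+O(t^3)$ to identify $h$ with $\proj{\tgtManif{\calM}{x}}(x'-x)$ up to $O(\normm{x'-x}^2)$---is the standard and clean way to obtain this first-order Taylor expansion for $C^2$ embedded submanifolds. The two steps you flagged as the delicate ones (the identity $c'(t)=\tau_t^{-1}\nabla^2_{\calM}g(\gamma(t))[\dot\gamma(t)]$, which is exactly the statement that the covariant derivative along $\gamma$ corresponds to ordinary differentiation after parallel transport, and the geodesic-versus-Euclidean comparison) are indeed classical and are precisely the ingredients one finds in the reference the paper cites. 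Your alternative extrinsic route via a $C^2$ extension $\widetilde{g}$ and the Weingarten map would also work and matches how the paper writes the Riemannian Hessian in Fact~\ref{fct:grad_psf}, but, as you note, it is messier.
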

%\subsection{Identifiable manifolds}
%``Identifiable manifolds'' have been introduced in \cite{drusvyatskiy_optimality_2014} in the general setting of ``Identifiable sets'' for set-valued mapping. 

%\begin{definition}[Identifiable manifolds]\label{def:id-mani} Let $G:\bbR^n\to\overline{\bbR}$ be a closed function. A set $\calM\subset\bbR^n$ is a $C^p$ $(p\geq2)$ identifiable manifold of $G$ at a point $\xpa\in \calM$ for $\vpa\in\partial G(\xpa)$ if the set $\calM$ is a $C^p-$manifold around $\xpa$ such that the following properties hold
%	\begin{enumerate}[label=(\roman*)]
	%		\item \textbf{(Smoothness)}  the restriction $G_{|\calM}$ is a $C^p$ function in the neighborhood of $\xpa$; 
	%		
	%		\item \textbf{(Finite identification)} for any sequences $\seq{\xk},\seq{v_k}$ such that $\xk \xrightarrow{G} \xpa$ and $v_k\to\vpa$ with $v_k\in \partial G(\xk)$, there exists $K$ large enough such that the iterates $\xk\in\calM,  \forall k\geq K$.
	%	\end{enumerate}
%\end{definition}
\subsection{Partial Smoothness}
Introduced by Lewis in \cite{lewis_active_2002}, ``Partial smoothness'' captures the characteristics of the geometry of nonsmooth functions. It axiomatizes the notion of active/identifiable submanifold or identifiable surfaces in \cite{wright_identifiable_1993}. A partly smooth function is smooth along the identifiable submanifold and sharp transversally to the manifold. Therefore, the behaviour of the function of its minimizers depends essentially on its restriction to this manifold, hence offering a powerful framework for algorithmic and sensitivity analysis theory. 

%Let $\calM\subset \bbR^n$ be an embedded $C^2-$smooth submanifold. $\tgtManif{\calM}{x}$ and $\normal{\calM}(x)$ denote respectively the tangent and the normal space to $\calM$ at a point $x\in\calM$.
\begin{definition}[Partly smooth function]\label{def:part-smoo} A function $g\in\Gamma_0(\bbR^n)$ is $C^2-$partly smooth at a point $x$ relative to the set $\calM$ containing $x$, if $\partial g(x)\neq\emptyset $ and $\calM$ is an embedded $C^2-$smooth submanifold and there exists a neighborhood $\scrV_{x}$ of $x$ such that the following properties hold
	\begin{enumerate}[label=(\roman*)]
		\item \textbf{(Smoothness)}  the restriction $g_{|\calM}$ is a $C^2$ function in the neighborhood $\scrV_{x}$; 
		%\item \textbf{(Regularity)}   $\forall x\in\scrV_{x}$,  $G$ is regular at $x$ and is $r-$prox-regular at $x$ for $\vpa$; 
		\item \textbf{(Sharpness)} The affine hull of $\partial g(x)$ is a translation of the space $\normal{\calM}(x)$, \ie 
		\begin{equation*}
			S_x \eqdef \LinHull(\partial g(x))=\normal{\calM}(x)\Leftrightarrow T_x \eqdef \orth{\LinHull(\partial g(x))}=\tgtManif{\calM}{x}. 
		\end{equation*} 
		\item \textbf{(Continuity)} The set-valued mapping $\partial g$ is continuous at $x$ relative to $\calM$. %\ie $\partial g(\xpa)\in C^0(\scrV_{\xpa}\cap\calM).$ 
	\end{enumerate}
\end{definition}
Observe that $S_x = T_x^\perp$ by definition. Throughout the rest of the work, we denote the class of $C^2-$partly smooth function at $x$ relative to $\calM$  by $\PSF{x}{\calM}$.

%\paragraph{Riemannian geometry and Partial smoothness} 

Owing to the definition of partial smoothness, we have the following facts. 
\begin{fact}\label{fct:normal_sharp}\textbf{(Local normal sharpness)} If $g\in\PSF{x}{\calM}$, then for all point $x'\in\calM$ near $x$ we have $\tgtManif{\calM}{x'}=T_{x'}$. In particular when $\calM$ is affine or linear, then $T_{x'}=T_x.$ 
\end{fact}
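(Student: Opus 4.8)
The statement is the standard ``local normal sharpness'' property of partly smooth functions, and the plan is to prove, for every $x'\in\calM$ close enough to $x$, the two inclusions $\tgtManif{\calM}{x'}\subseteq T_{x'}$ and $T_{x'}\subseteq\tgtManif{\calM}{x'}$, and then to read off the affine case from the fact that the tangent space of an affine manifold is constant. The first inclusion will follow from the Smoothness axiom alone, the second from the Continuity axiom together with a dimension count; the second is where the actual content sits, since the first is essentially a first-order optimality computation.

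For the first inclusion I would fix $x'\in\calM\cap\scrV_x$ (so that $g_{|\calM}$ is $C^2$ near $x'$ and, $g$ being convex, $\partial g(x')$ is the convex subdifferential, nonempty for $x'$ near $x$), take arbitrary $v\in\partial g(x')$ and $h\in\tgtManif{\calM}{x'}$, and test the subgradient inequality along a $C^1$ curve $\gamma$ in $\calM$ with $\gamma(0)=x'$ and $\dot\gamma(0)=h$: the $C^1$ function $t\mapsto g_{|\calM}(\gamma(t))-g(x')-\pscal{v,\gamma(t)-x'}$ is nonnegative and vanishes at $t=0$, so its derivative there vanishes, which gives $\pscal{\nabla_{\calM}g(x')-\proj{\tgtManif{\calM}{x'}}v,h}=0$; letting $h$ range over $\tgtManif{\calM}{x'}$ shows $\proj{\tgtManif{\calM}{x'}}v=\nabla_{\calM}g(x')$ for every $v\in\partial g(x')$. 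Hence $v_1-v_2\in\orth{\tgtManif{\calM}{x'}}=\normSp{\calM}{x'}$ for all $v_1,v_2\in\partial g(x')$, i.e. $S_{x'}=\LinHull(\partial g(x'))\subseteq\normSp{\calM}{x'}$, which is exactly $\tgtManif{\calM}{x'}\subseteq T_{x'}$.

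For the reverse inclusion, set $d=\dim\calM$ (locally constant), so $\dim\normSp{\calM}{x'}=n-d$ for $x'\in\calM$ near $x$. The Sharpness axiom at $x$ gives $\dim S_x=n-d$, so $\partial g(x)$, being convex and spanning its affine hull, contains $n-d+1$ affinely independent points $v_0,\dots,v_{n-d}$. By inner semicontinuity of $\partial g$ relative to $\calM$ (part of the Continuity axiom), along any sequence $x_k'\to x$ in $\calM$ there are selections $v_i^{k}\in\partial g(x_k')$ with $v_i^{k}\to v_i$; since affine independence is an open condition, the $v_i^{k}$ remain affinely independent for $k$ large, whence $\dim S_{x_k'}\ge n-d$. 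As this holds along every such sequence, $\dim S_{x'}\ge n-d$ for all $x'\in\calM$ in some neighborhood of $x$; combined with $S_{x'}\subseteq\normSp{\calM}{x'}$ and $\dim\normSp{\calM}{x'}=n-d$ this forces $S_{x'}=\normSp{\calM}{x'}$, i.e. $T_{x'}=\tgtManif{\calM}{x'}$. Finally, if $\calM$ is affine or linear then $\calM=x+\tgtManif{\calM}{x}$, so $\tgtManif{\calM}{x'}=\tgtManif{\calM}{x}$ for all $x'\in\calM$, and therefore $T_{x'}=\tgtManif{\calM}{x'}=\tgtManif{\calM}{x}=T_x$.

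The main obstacle is the reverse inclusion: turning the purely topological continuity of the set-valued map $\partial g$ along $\calM$ into the quantitative lower bound $\dim S_{x'}\ge n-d$. The key point is that inner semicontinuity lets one transport finitely many affinely independent subgradients from $x$ to nearby points simultaneously, and that affine independence is stable under small perturbations; the only care needed is that all the selections can be taken convergent at once, which is exactly inner semicontinuity applied to each of the finitely many points. Note that we only need to establish the tangent-space identity, not to re-verify the whole partial-smoothness definition at $x'$; the stronger statement that $g\in\PSF{x'}{\calM}$ for $x'$ near $x$ on $\calM$ is available from \cite{lewis_active_2002} should one prefer to invoke it directly.
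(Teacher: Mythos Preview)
The paper does not supply a proof of this fact; it simply records it as a direct consequence of the partial smoothness axioms (and the subsequent two lemmas are deferred to \cite{liang_thesis_2016}). Your argument is correct and is essentially the standard one going back to \cite{lewis_active_2002}: the Smoothness axiom forces every subgradient at $x'$ to project onto the Riemannian gradient, so $S_{x'}\subseteq\normSp{\calM}{x'}$, while the Continuity axiom lets you transport $n-d+1$ affinely independent subgradients from $x$ to nearby $x'$, forcing $\dim S_{x'}\ge n-d$ and hence equality. The affine case then follows from constancy of the tangent space. There is nothing to compare on the paper's side beyond noting that your written-out proof is exactly the argument the paper is implicitly invoking.
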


\begin{fact}\label{fct:grad_psf} If $g\in \PSF{x}{\calM}$, then  for all $x'\in \calM$ near $x$ we have
	$$
	\nabla_{\calM}g(x')=\proj{T_{x'}}\Ppa{\partial g(x')},
	$$
	and this does not depend on the smooth representation of $g$ on $\calM.$ In turn, for all $h\in T_{x'},$
	\begin{align*}
		\nabla^2_{\calM}g(x')h=\proj{T_{x'}}\nabla^2\widetilde{g}(x')h+\Wgtmap{x'}(h,\proj{T^\bot_{x'}}\nabla\widetilde{g}(x')),
	\end{align*}
	where $\widetilde{g}$ is a smooth representative of $g$ on $\calM$ and $\Wgtmap{x'}(\cdot,\cdot)$ is the Weingarten map of $\calM$ at $x$.
\end{fact}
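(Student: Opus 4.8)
The plan is to deduce Fact~\ref{fct:grad_psf} from three ingredients already available: the \emph{smoothness} and \emph{sharpness} axioms of Definition~\ref{def:part-smoo}, the local normal sharpness of Fact~\ref{fct:normal_sharp}, and the coordinate formulas for the Riemannian gradient and Hessian of a function with a $C^2$ restriction on $\calM$ recorded in Section~\ref{sec:rieman-tool}. Fix $x'\in\calM$ close enough to $x$. By Fact~\ref{fct:normal_sharp} we have $T_{x'}=\tgtManif{\calM}{x'}$, hence $T_{x'}^{\perp}=\normSp{\calM}{x'}$, so $\proj{T_{x'}}$ and $\proj{\tgtManif{\calM}{x'}}$ coincide. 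The smoothness axiom ensures $g_{|\calM}\in C^2$ near $x$, so we may fix a $C^2$ representative $\widetilde g$ of $g$ on a neighborhood of $x$ in $\bbR^n$ that agrees with $g$ on $\calM$.

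First I would prove the gradient identity. Sharpness gives $\LinHull(\partial g(x'))=\normSp{\calM}{x'}=T_{x'}^{\perp}$; since $\partial g(x')$ is convex, any two of its points differ by a vector in $T_{x'}^{\perp}$, so $\proj{T_{x'}}\pa{\partial g(x')}$ is a singleton, nonempty because $\partial g(x')\neq\emptyset$. To identify this singleton, I would compute the directional behaviour of $g$ at $x'$ in tangent directions: for $h\in T_{x'}$, the curve $t\mapsto\proj{\calM}(x'+th)$ lies in $\calM$, agrees to first order with $x'+th$ (since $\proj{\calM}$ is differentiable at $x'$ with derivative $\proj{T_{x'}}$ and $\proj{T_{x'}}h=h$), and along it $g$ coincides with the $C^2$ function $\widetilde g$; standard facts on subderivatives of convex functions then give $dg(x')(\pm h)=\pm\pscal{\nabla_{\calM}g(x'),h}$. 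Since for convex $g$ one has $\pscal{v,h}\leq dg(x')(h)$ for every $v\in\partial g(x')$ and $dg(x')(h)=\sup_{v\in\partial g(x')}\pscal{v,h}$, applying this with $h$ and $-h$ forces $\pscal{v,h}=\pscal{\nabla_{\calM}g(x'),h}$ for all $v\in\partial g(x')$ and all $h\in T_{x'}$, i.e. $\proj{T_{x'}}\pa{\partial g(x')}=\{\nabla_{\calM}g(x')\}$. This is independent of the smooth representation, since $\nabla_{\calM}g(x')$ is defined only through the values of $g$ on $\calM$.

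For the Hessian formula there is essentially nothing left to do: the identity displayed in the statement is exactly the general Riemannian-Hessian formula derived in Section~\ref{sec:rieman-tool} for any function admitting a $C^2$ restriction on $\calM$ together with a $C^2$ representative $\widetilde g$, and the smoothness axiom provides precisely such a restriction and representative for $g$. It remains only to rewrite $\tgtManif{\calM}{x'}=T_{x'}$ and $\normSp{\calM}{x'}=T_{x'}^{\perp}$ through Fact~\ref{fct:normal_sharp}, and to observe that the right-hand side does not depend on the choice of $\widetilde g$ because $\nabla^2_{\calM}g(x')$ depends only on $g_{|\calM}$.

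The only genuinely delicate step is identifying \emph{which} singleton $\proj{T_{x'}}\pa{\partial g(x')}$ is; sharpness alone only says it is a point. Matching it with $\nabla_{\calM}g(x')$ needs the directional-derivative computation above, whose one subtlety is controlling the $O(t^2)$ gap between the curve $\proj{\calM}(x'+th)$ and the segment $x'+th$ — harmless since $g$ coincides with the $C^1$ function $\widetilde g$ on $\calM$ and $\proj{\calM}$ is differentiable at $x'$. Everything else reduces to the already-recorded Riemannian formulas and the identification $T_{x'}=\tgtManif{\calM}{x'}$.
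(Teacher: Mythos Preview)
The paper does not actually supply a proof of Fact~\ref{fct:grad_psf}: it is recorded as background, introduced by ``Owing to the definition of partial smoothness, we have the following facts,'' and left unproved (these identities are classical in the partial smoothness literature, going back to Lewis~\cite{lewis_active_2002} and the identifiability works cited in the paper). So there is nothing to compare against; I can only assess your argument on its own.

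Your strategy is sound and the proof is essentially correct. The Hessian part is indeed immediate from the general embedded-Hessian formula already displayed in Section~\ref{sec:rieman-tool}, once Fact~\ref{fct:normal_sharp} turns $\tgtManif{\calM}{x'}$ into $T_{x'}$. For the gradient part, sharpness at $x'$ (granted by Fact~\ref{fct:normal_sharp}) makes $\proj{T_{x'}}\pa{\partial g(x')}$ a singleton, and your identification of that singleton with $\nabla_{\calM}g(x')$ via directional derivatives works. One remark: you route the argument through the \emph{ambient} subderivative $dg(x')(h)$ and then compare it to the derivative along the projected curve; the ``$O(t^2)$ gap is harmless'' step tacitly needs either local Lipschitzness of $g$ near $x'$ or the $\liminf$ definition of $dg$. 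A slightly cleaner variant avoids this entirely: for any $v\in\partial g(x')$ and any $y\in\calM$ close to $x'$, the subgradient inequality gives $\widetilde g(y)-\widetilde g(x')\geq\pscal{v,y-x'}$, so $y\mapsto \widetilde g(y)-\pscal{v,y}$ restricted to $\calM$ has a local minimum at $x'$, whence $\proj{T_{x'}}(\nabla\widetilde g(x')-v)=0$, i.e.\ $\proj{T_{x'}}v=\nabla_{\calM}g(x')$. This is the same idea as yours, just bypassing the ambient directional derivative.
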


\end{appendices}

\begin{acknowledgments}
The author would like to thank Jalal Fadili for fruitful discussions  and the French National Research Agency (ANR) for funding the project FIRST (ANR-19-CE42-0009) 
\end{acknowledgments}

\bibliographystyle{plain}

\bibliography{BPG_PSF.bib}

\end{document}